\newcommand{\K}{\ensuremath{\mathbb{K}}}
\newcommand{\N}{\ensuremath{\mathbb{N}}}
\newcommand{\Z}{\ensuremath{\mathbb{Z}}}
\newtheorem{thm}{Theorem}[section]
\newtheorem{cor}[thm]{Corollary}
\newtheorem{prop}[thm]{Proposition}
\newtheorem{lemma}[thm]{Lemma}
\newtheorem{note}[thm]{Note}
\newtheorem{definition}[thm]{Definition}
\newtheorem{assump}[thm]{Assumption}
\renewenvironment{proof}{\medskip\noindent{\emph {Proof:}\ }}{\qed \medskip
}
\begin{document}

\title{ \bf Tridiagonal pairs of $q$-Racah type,\\ the double lowering operator $\psi$, and the quantum algebra  
$U_q(\mathfrak{sl}_2)$}
\author{Sarah Bockting-Conrad
}

\date{}
\maketitle

\begin{abstract}
Let $\K$ denote an algebraically closed field and let $V$ denote a vector space over $\K$ with finite positive dimension.  
We consider an ordered pair of linear transformations $A: V\to V$ and $A^*: V \to V$ that satisfy the following four conditions:  
(i) Each of $A,A^*$ is diagonalizable;
(ii) there exists an ordering $\{ V_i\}_{i=0}^d$ of the eigenspaces of $A$ such that 
$A^* V_i \subseteq V_{i-1}+V_i+V_{i+1}
$ for $0\leq i\leq d$, where $V_{-1}=0$ and $V_{d+1}=0$;
(iii) there exists an ordering $\{V^*_i\}_{i=0}^{\delta}$ of the eigenspaces of $A^*$ such that 
$A V^*_i \subseteq V^*_{i-1}+V^*_i+V^*_{i+1}
$ for $0\leq i\leq \delta$, where $V^*_{-1}=0$ and $V^*_{\delta+1}=0$;
(iv) there does not exist a subspace $W$ of $V$ such that $AW \subseteq W$, $A^*W\subseteq W$, $W\neq 0$, $W\neq V$.  
We call such a pair a {\it tridiagonal pair} on $V$.  
It is known that $d=\delta$; to avoid trivialities assume $d\geq 1$.  
We assume that $A,A^*$ belongs to a family of tridiagonal pairs said to have 
$q$-Racah type.  This is the most general type of tridiagonal pair.  
Let $\{U_i\}_{i=0}^d$ and $\{U_i^\Downarrow\}_{i=0}^{d}$ denote the first and second split decompositions of $V$.  
In an earlier paper we introduced the double lowering operator $\psi:V\to V$.  
One feature of $\psi$ is that both 
$\psi U_i\subseteq U_{i-1}$ 
and 
$\psi U_i^\Downarrow\subseteq U_{i-1}^\Downarrow$ for $0\leq i\leq d$, where $U_{-1}=0$ and $U_{-1}^\Downarrow=0$.  
Define linear transformations $K:V\to V$ and $B:V\to V$ such that $(K-q^{d-2i}I)U_i=0$ and $(B-q^{d-2i}I)U_i^\Downarrow=0$ 
for $0\leq i\leq d$.  
Our results are summarized as follows.  
Using $\psi, K,B$ we obtain two actions of $U_q(\mathfrak{sl}_2)$ on $V$.  For each of these $U_q(\mathfrak{sl}_2)$-module structures, the Chevalley generator $e$ acts as a scalar multiple of $\psi$. 
For each of the $U_q(\mathfrak{sl}_2)$-module structures, we compute the action of the Casimir element on $V$.  We show that these two actions agree.  
Using this fact, we express $\psi$ as a rational function of $K^{\pm 1}, B^{\pm 1}$ in several ways.  Eliminating $\psi$ from these equations we find that $K$ and $B$ are related by a quadratic equation.

\bigskip
\noindent
{\bf Keywords}. 
Tridiagonal pair, Leonard pair, quantum group $U_q(\mathfrak{sl}_2)$, quantum algebra, $q$-Racah polynomial, $q$-Serre relations.
 \hfil\break
\noindent {\bf 2010 Mathematics Subject Classification}. 
Primary: 15A21.  Secondary: 05E30.
\end{abstract}

\section{Introduction}\label{section:intro}

Throughout this paper, $\K$ denotes an algebraically closed field.\\

We begin by recalling the notion of a tridiagonal pair.
We will use the following terms.  Let $V$ denote a vector space over $\K$ with finite positive dimension.  For a linear transformation $A:V\to V$ and a subspace $W\subseteq V$, we say that $W$ is an {\it eigenspace} of $A$ whenever $W\neq 0$ and there exists $\theta\in\K$ such that $W=\{ v\in V | Av=\theta v\}$.  In this case, $\theta$ is called the {\it eigenvalue} of $A$ associated with $W$.  We say that $A$ is {\it diagonalizable} whenever $V$ is spanned by the eigenspaces of $A$.

\begin{definition}  {\rm \cite[Definition 1.1]{Somealg}}. \label{def:tdp}
{\rm Let $V$ denote a vector space over $\K$ with finite positive dimension. 
By a {\em tridiagonal pair} (or {\em TD pair}) on $V$ we mean an ordered pair of linear 
transformations $A:V \to V$ and $A^*:V \to V$ that satisfy the following 
four conditions.
\begin{enumerate}
\item[{\rm (i)}] 
Each of $A,A^*$ is diagonalizable.
\item[{\rm (ii)}] 
There exists an ordering $\{V_i\}_{i=0}^d$ of the eigenspaces of $A$ 
such that 
\begin{equation}               \label{eq:t1}
A^* V_i \subseteq V_{i-1} + V_i+ V_{i+1} \qquad \qquad (0 \leq i \leq d),
\end{equation}
where $V_{-1} = 0$ and $V_{d+1}= 0$.
\item[{\rm (iii)}]
There exists an ordering $\{V^*_i\}_{i=0}^{\delta}$ of the eigenspaces of 
$A^*$ such that 
\begin{equation}                \label{eq:t2}
A V^*_i \subseteq V^*_{i-1} + V^*_i+ V^*_{i+1} 
\qquad \qquad (0 \leq i \leq \delta),
\end{equation}
where $V^*_{-1} = 0$ and $V^*_{\delta+1}= 0$.
\item [{\rm (iv)}]
There does not exist a subspace $W$ of $V$ such  that $AW\subseteq W$,
$A^*W\subseteq W$, $W\not=0$, $W\not=V$.
\end{enumerate}
We say the pair $A,A^*$ is {\em over $\K$}.}
\end{definition}

\begin{note}   \label{note:star}        \samepage
{\rm According to a common notational convention $A^*$ denotes 
the conjugate-transpose of $A$. We are not using this convention.
In a TD pair $A,A^*$ the linear transformations $A$ and $A^*$
are arbitrary subject to (i)--(iv) above.}
\end{note}

Referring to the TD pair in Definition \ref{def:tdp}, by  \cite[Lemma 4.5]{Somealg} 
the scalars 
 $d$ and $\delta $ are equal.
We call this common value the {\it diameter} of $A,A^*$.
To avoid trivialities, throughout this paper we assume that the diameter is at least one.\\

We now give some background on TD pairs; for more information we refer the reader to the survey \cite{Askeyscheme}.  
The notion of a TD pair originated in the theory of Q-polynomial distance-regular graphs \cite{subconstituent}.
The notion was formally introduced in \cite{Somealg}.  
Over time, connections were found between TD pairs and other areas of mathematics and physics.  
For instance, there are connections between TD pairs and  
representation theory \cite{TDfamily, neubauer, td-uqsl2, qtetalgebra, IT:qRacah, Koornwinder, Koornwinder2, aw}, orthogonal polynomials \cite{Askeyscheme, 2LT-PA}, partially ordered sets \cite{LPintro}, statistical mechanical models \cite{baseilhac, dolangrady, Onsager}, and other areas of physics \cite{LPcm, odake}.  
Among the above papers on representation theory, there are several works that connect TD pairs to quantum groups \cite{TDfamily, neubauer, td-uqsl2,   IT:qRacah}.  
These papers consider certain special classes of TD pairs.  
In \cite{TDfamily}, Curtin and Al-Najjar considered the class of mild TD pairs of $q$-Serre type.  They showed that these TD pairs induce a  
$U_q(\widehat{\mathfrak{sl}}\sb 2)$-module structure on their underlying vector space.
In \cite{neubauer}, Funk-Neubauer extended this construction to TD pairs of $q$-Hahn type.  
In \cite{td-uqsl2}, Ito and Terwilliger extended the construction to the entire $q$-Serre class.  
In \cite{IT:qRacah}, Ito and Terwilliger extended the construction to the $q$-Racah class.\\

In the present paper, we describe a relationship between TD pairs and quantum groups that appears to be new.  
In order to motivate our main results, we recall some basic facts concerning TD pairs.  
For the rest of this section, let $A,A^*$ denote a TD pair on $V$, as in Definition \ref{def:tdp}.  Fix an ordering $\{V_i\}_{i=0}^d$ (resp. $\{V_i^*\}_{i=0}^d$) of the eigenspaces of $A$ (resp. $A^*$) which satisfies (\ref{eq:t1}) (resp. (\ref{eq:t2})).  For $0\leq i\leq d$ let $\theta_i$ (resp. $\theta_i^*$) denote the eigenvalue of $A$ (resp. $A^*$) corresponding to $V_i$ (resp. $V_i^*$).   
By \cite[Theorem 11.1]{Somealg} the ratios
\begin{equation*}
\frac{\theta_{i-2}-\theta_{i+1}}{\theta_{i-1}-\theta_i},\qquad\qquad \frac{\theta_{i-2}^*-\theta_{i+1}^*}{\theta_{i-1}^*-\theta_i^*}\label{eq:intro-ratio}
\end{equation*}
are equal and independent of $i$ for $2\leq i\leq d-1$.  This gives two recurrence relations, whose solutions can be written in closed form.  
There are several cases \cite[Theorem 11.2]{Somealg}.  
The most general case is called the $q$-Racah case \cite[Section 1]{IT:qRacah}.  We will discuss this case shortly.\\

We now recall the split decompositions of $V$ \cite{Somealg}.  
For $0\leq i\leq d$ define
\begin{align*}
U_i&= (V^*_0+V^*_1+\cdots + V^*_i)\cap (V_i+V_{i+1}+\cdots + V_d),\\
U_i^{\Downarrow} &= (V^*_0+V^*_1+\cdots + V^*_i)\cap (V_0+V_1+\cdots + V_{d-i}).
\end{align*}
By \cite[Theorem 4.6]{Somealg},
both the sums $V=\sum_{i=0}^d U_i$ and $V=\sum_{i=0}^d U_i^{\Downarrow}$ are direct.  We call $\{U_i\}_{i=0}^d$ (resp. $\{U_i^{\Downarrow}\}_{i=0}^d$) the first split decomposition (resp.  second split decomposition) of $V$.  
By \cite[Theorem 4.6]{Somealg}, $A$ and $A^*$ act on the first split decomposition in the following way: 
\begin{align*}
&(A-\theta_i I)U_i\subseteq U_{i+1} &(0\leq i \leq d-1 ), \qquad &(A-\theta_d I)U_d=0,\\
&(A^*-\theta_i^* I)U_i\subseteq U_{i-1} &(1\leq i\leq d ), \qquad &(A^*-\theta_0^* I)U_0=0.
\end{align*}
By \cite[Theorem 4.6]{Somealg}, $A$ and $A^*$ act on the second split decomposition in the following way:
\begin{align*}
&(A-\theta_{d-i} I)U_i^{\Downarrow}\subseteq U_{i+1}^{\Downarrow}&(0\leq i\leq d-1 ), \qquad &(A-\theta_0 I)U_d^{\Downarrow}=0,\\
&(A^*-\theta_i^* I)U_i^{\Downarrow}\subseteq U_{i-1}^{\Downarrow}&(1\leq i\leq d ), \qquad &(A^*-\theta_0^* I)U_0^{\Downarrow}=0.
\end{align*}

\bigskip

We now describe the $q$-Racah case.  
We say that the TD pair $A,A^*$ has {\it $q$-Racah type} whenever
 there exist nonzero scalars $q,a,b\in\K$ 
 such that $q^4\neq 1$ and 
\begin{equation*}
\theta_i=aq^{d-2i}+a^{-1}q^{2i-d},\qquad\qquad
\theta_i^*=bq^{d-2i}+b^{-1}q^{2i-d}
\end{equation*}
for $0\leq i\leq d$.\\

For the rest of this section assume that $A,A^*$ has $q$-Racah type.\\

We recall the maps $K$ and $B$ \cite[Section 1.1]{augTDalg}.  
Let $K:V\to V$ denote the linear transformation such that for $0\leq i \leq d$, $U_i$ is an eigenspace of $K$ with eigenvalue $q^{d-2i}$.  Let $B:V\to V$ denote the linear transformation such that for $0\leq i \leq d$, $U_i^{\Downarrow}$ is an eigenspace of $B$ with eigenvalue $q^{d-2i}$.  
In \cite[Section 1.1]{augTDalg} it is shown how each of $K, B$ is related to 
$A$ and $A^*$,   
but the relationship between $K$ and $B$ is not discussed.  
One of the goals of the present paper is to describe 
how $K$ and $B$ are related.  
We show that 
\begin{align}
aK^2-\frac{a^{-1}q-aq^{-1}}{q-q^{-1}}\ KB&-\frac{aq-a^{-1}q^{-1}}{q-q^{-1}}\ BK+a^{-1}B^2=0.\label{eq:intro1}
\end{align}

We now recall the raising maps 
$R:V\to V$ and $R^\Downarrow:V\to V$ \cite[Definition 6.1]{Somealg}.   
Following \cite{augTDalg} we define
\begin{align*}
R = A-aK-a^{-1}K^{-1},\quad\qquad 
R^{\Downarrow} = A-a^{-1}B-aB^{-1}.
\end{align*}
As we will see in Section \ref{section:U}, for $0\leq i\leq d$, the map $R$ acts on $U_i$ as 
$A-\theta_i I$, and the map $R^\Downarrow$ acts on $U_i^\Downarrow$ as 
$A-\theta_{d-i} I$.  
Moreover
\begin{align*}
RU_i&\subseteq U_{i+1}  &(0\leq i\leq d-1), \qquad\qquad &RU_d=0, \\
 R^\Downarrow U_i^\Downarrow&\subseteq U_{i+1}^\Downarrow  &(0\leq i\leq d-1), \qquad\qquad &R^\Downarrow U_d^\Downarrow=0.
 \end{align*}

We now bring in the linear transformation $\Psi:V\to V$ \cite[Lemma 11.1]{bockting}.  
We will work with the normalization $\psi=(q-q^{-1})(q^d-q^{-d})\Psi$.  One attraction of $\psi$ is that by 
\cite[Lemma 11.2, Corollary 15.3]{bockting}, 
\begin{equation}
\psi U_i\subseteq U_{i-1},\qquad\qquad \psi U_i^\Downarrow\subseteq U_{i-1}^\Downarrow
\end{equation}
for $1\leq i\leq d$ and both $\psi U_0=0$ and $\psi U_0^\Downarrow=0$.  
Drawing on the results in \cite{bockting}, 
we obtain some equations that link $\psi$ to the maps $K,B,R,R^\Downarrow$.  
From these equations we obtain 
two $U_q(\mathfrak{sl}_2)$-module structures on $V$.  
For the first $U_q(\mathfrak{sl}_2)$-module structure, the Chevalley generators $e,f,k,k^{-1}$ act as follows:
\begin{center}
\begin{tabular}{c|cccc}
{\rm element of $U_q(\mathfrak{sl}_2)$} &  $e$ & $f$ &$k$ & $k^{-1}$
\\
\hline
{\rm action on $V$} & $(q-q^{-1})^{-1}\psi$ \ \ & $(q-q^{-1})^{-1}R$ \ \ &$K$ \ & $K^{-1}$
\end{tabular}
\end{center}
For the second $U_q(\mathfrak{sl}_2)$-module structure, the Chevalley generators act as follows:
\begin{center}
\begin{tabular}{c|cccc}
{\rm element of $U_q(\mathfrak{sl}_2)$} &  $e$ & $f$ &$k$ & $k^{-1}$
\\
\hline
{\rm action on $V$} & $(q-q^{-1})^{-1}\psi$ \ \ & $(q-q^{-1})^{-1}R^\Downarrow$ \ \ &$B$ \ & $B^{-1}$
\end{tabular}
\end{center}
 For each of the above $U_q(\mathfrak{sl}_2)$-module structures we obtain a direct sum decomposition of $V$ into irreducible $U_q(\mathfrak{sl}_2)$-submodules.  
Also, in each case, we compute the action of the Casimir element on $V$.  We show that these two actions agree.
Using this information  
 we show that $\psi$ is equal to each of the following:
\begin{eqnarray}
&{\displaystyle \frac{I-BK^{-1}}{q(aI-a^{-1}BK^{-1})}, \qquad\qquad\qquad \frac{I-KB^{-1}}{q(a^{-1}I-aKB^{-1})}},\label{eq:intro-psiequations1}\\ \medskip
&{\displaystyle \frac{q(I-K^{-1}B)}{aI-a^{-1}K^{-1}B},\qquad\qquad\qquad \frac{q(I-B^{-1}K)}{a^{-1}I-aB^{-1}K}}.
\label{eq:intro-psiequations2}
\end{eqnarray}
 Line (\ref{eq:intro1}) 
 is a consequence of the fact that the four expressions in (\ref{eq:intro-psiequations1}), (\ref{eq:intro-psiequations2}) are equal. \\ 
 
We have now finished summarizing the main results of the paper.  As we prove these results, we obtain some secondary results that might be of independent interest.  These results are about how the two $U_q(\mathfrak{sl}_2)$-module structures on $V$ are related.  We also comment on the relationship between $A$ and $\psi$.\\   

The paper is organized as follows.  In Section \ref{section:prelim} we discuss some preliminary facts concerning TD pairs and TD systems.  
In Sections \ref{section:U} and \ref{section:Urefine} we discuss the split decompositions of $V$ as well as the maps $K$ and $B$.  
In Section \ref{section:psi} we discuss the map $\psi$.
In Section \ref{section:uqsl2} we recall the algebra $U_q(\mathfrak{sl}_2)$ and its modules.  
In Section \ref{section:Umod} we use our TD pair to obtain a $U_q(\mathfrak{sl}_2)$-module structure on $V$.
In Section \ref{section:Umoddd} we use our TD pair to obtain a second $U_q(\mathfrak{sl}_2)$-module structure on $V$.
In Section \ref{section:related} we discuss how the two  $U_q(\mathfrak{sl}_2)$-actions on $V$ are related.  This leads to some equations relating the maps $\psi, K^{\pm 1}, B^{\pm 1}$.  These equations are discussed further in Section \ref{section:related2}. 
In Section \ref{section:Apsi} we discuss how $A,\psi$ are related.\\

 In Lemma \ref{lemma:Uaction} and Lemma \ref{lemma:Uddaction} we give two $U_q(\mathfrak{sl}_2)$-actions on $V$.  As far as we know, these actions are unrelated to the actions given in earlier papers on 
 TD pairs and quantum groups \cite{TDfamily, neubauer, td-uqsl2, IT:qRacah}.  This is discussed in Note \ref{note:uqsl2hat}.

\section{Preliminaries}\label{section:prelim}

When working with a tridiagonal pair, it is useful to consider a closely related object called a tridiagonal system. In order to define this object, we first recall some facts from elementary linear algebra \cite[Section 2]{Somealg}.\\

We use the following conventions.  When we discuss an algebra, we mean a unital associative algebra. When we discuss a subalgebra, we assume that it has the same unit as the parent algebra.\\

Let $V$ denote a vector space over $\K$ with finite positive dimension.
Let $ {\rm End} (V)$ denote the $\K$-algebra consisting of all linear transformations from $V$ to $V$.
Let $A$ denote a diagonalizable element in ${\rm End}(V)$.  
Let $\{V_i\}_{i=0}^d$ denote an ordering of the eigenspaces of $A$.
For $0\leq i\leq d$ let $\theta_i$ be the eigenvalue of $A$ corresponding to $V_i$.
Define $E_i\in {\rm End}(V)$ by
$ (E_i-I)V_i=0$ and 
$E_iV_j=0$ if $j\neq i$  $(0\leq j\leq d)$.
In other words, $E_i$ is the projection map from $V$ onto $V_i$.  
We refer to $E_i$ as the {\it primitive idempotent} of
$A$ associated with $\theta_i$.
By elementary linear algebra, the following (i)--(iv) hold: (i)
$AE_i = E_iA = \theta_iE_i$   $(0 \leq i \leq d)$;
(ii) $E_iE_j = \delta_{ij}E_i$   $(0 \leq i,j\leq d)$;
(iii) $V_i=E_iV$   $(0 \leq i \leq d)$;
(iv) $I=\sum_{i=0}^d E_i$.
Moreover 
\begin{eqnarray*}
E_i = \prod_{{0 \leq  j \leq d}\atop{j\not=i}} {{A-\theta_j I}\over {\theta_i-\theta_j}}\qquad \qquad (0 \leq i \leq d).\label{EA} 
\end{eqnarray*}

Let $M$ denote the subalgebra of ${\rm End}(V)$ generated by $A$.  
Note that each of $\{ A^i\}_{i=0}^d$, $\{E_i\}_{i=0}^d$ is a basis for the $\K$-vector space $M$.\\  

Let $A,A^*$ denote a TD pair on $V$.  An ordering of the eigenspaces of $A$ (resp. $A^*$) is said to be {\it standard} whenever it satisfies (\ref{eq:t1}) (resp. (\ref{eq:t2})).  
Let $\{V_i\}_{i=0}^d$ denote a standard ordering of the eigenspaces of $A$.  By \cite[Lemma 2.4]{Somealg}, the ordering $\{V_{d-i}\}_{i=0}^d$ is standard and no further ordering of the eigenspaces of $A$ is standard.  A similar result holds for the eigenspaces of $A^*$.  An ordering of the primitive idempotents of $A$ (resp. $A^*$) is said to be {\it standard} whenever the corresponding ordering of the eigenspaces of $A$ (resp. $A^*$) is standard.

\begin{definition}{\rm \cite[Definition 2.1]{TDclass}.} \label{def:TDsys} {\rm
Let $V$ denote a vector space over $\K$ with finite positive dimension.  
By a {\it tridiagonal system} (or {\em TD system}) on $V,$  we mean a 
sequence 
\begin{equation*}
\Phi = (A; \{ E_i\}_{i=0}^d; A^*;\{ E_i^*\}_{i=0}^d)
\end{equation*}
 that satisfies  (i)--(iii) below.
\begin{enumerate}
\item[{\rm (i)}] $A, A^*$ is a tridiagonal pair on $V$. 
\item [{\rm (ii)}]$\{ E_i\}_{i=0}^d$ is a standard ordering of the primitive 
idempotents of $\;A$.
\item[{\rm (iii)}] $\{ E_i^*\}_{i=0}^d$ is a standard ordering of the primitive 
idempotents of $\;A^*$.
\end{enumerate}
We call $d$ the {\it diameter} of $\Phi$,  
and say $\Phi$ is {\it over } $\K$. 
 For notational convenience, set $E_{-1}=0$, $E_{d+1}=0$, $
E^*_{-1}=0$, $E^*_{d+1}=0$.}
\end{definition}

In Definition \ref{def:TDsys} we do not assume that the primitive idempotents $\{ E_i\}_{i=0}^d,\{ E_i^*\}_{i=0}^d$ all 
have rank 1.  A TD system for which each of these primitive idempotents has rank 1 is called a Leonard system \cite{2LT}.
The Leonard systems  are classified up to isomorphism \cite[Theorem 1.9]{2LT}.\\

For the rest of this section, fix a TD system $\Phi$ on $V$ as in Definition \ref{def:TDsys}.

\begin{definition}\label{def:main}{\rm
For $0\leq i\leq d$ let $\theta_i$ (resp. $\theta_i^*$) denote the eigenvalue of $A$ (resp. $A^*$) associated with $E_i$ (resp. $E_i^*$).
We refer to $\{\theta_i\}_{i=0}^d$ (resp. $\{\theta_i^*\}_{i=0}^d$) as the {\it eigenvalue sequence} (resp. {\it dual eigenvalue sequence}) of $\Phi$.  
By construction $\{\theta_i\}_{i=0}^d$ are mutually distinct and $\{\theta_i^*\}_{i=0}^d$ are mutually distinct.
}\end{definition}

Our TD system $\Phi$ can be modified in a number of ways to get a new TD system \cite[Section 3]{Somealg}.  For example, the sequence
\begin{eqnarray*}
\Phi^{\Downarrow} = (A;\{ E_{d-i}\}_{i=0}^d;A^*;\{ E_i^*\}_{i=0}^d)
\end{eqnarray*}
is a TD system on $V$. 
Following  {\rm \cite[Section 3]{Somealg}}, we call $\Phi^{\Downarrow}$ the {\it second inversion} of $\Phi$.
When discussing $\Phi^{\Downarrow}$, we use the following notational convention.
For any object $f$ associated with $\Phi$, let $f^{\Downarrow}$ denote the corresponding object associated with $\Phi^{\Downarrow}$.\\

We associate with $\Phi$ a family of polynomials as follows.
Let $x$ be an indeterminate.  Let $\K [x]$ denote the $\K$-algebra consisting of the polynomials in $x$ that have all coefficients in $\K$.  For $0\leq i\leq j\leq d+1$, we define the polynomials $\tau_{ij}=\tau_{ij}(\Phi)$ in $\K [x]$ by
\begin{align} \tau_{ij}&=(x-\theta_i)(x-\theta_{i+1})\cdot\cdot\cdot (x-\theta_{j-1}). \label{eq:tau}
\end{align}
Note that $\tau_{ij}$ is monic with degree $j-i$.  In particular, $\tau_{ii}=1$.
For notational convenience, define $\tau_{i,i-1}=0$.

\begin{assump}\label{assump:main}
Let $\Phi$ denote a TD system on $V$ as in Definition \ref{def:TDsys}.  
Assume that $\Phi$ has $q$-Racah type. 
Thus, there exist nonzero $q,a,b\in\K$ 
 such that $q^4\neq 1$ and 
\begin{equation}
\theta_i=aq^{d-2i}+a^{-1}q^{2i-d},\qquad\qquad
\theta_i^*=bq^{d-2i}+b^{-1}q^{2i-d}
\label{eq:theta-assump}
\end{equation}
for $0\leq i\leq d$.  
\end{assump}

\begin{lemma}\label{note:main}
With reference to Assumption \ref{assump:main}, the following hold.
\begin{itemize}
\item[{\rm (i)}] Neither of $a^2$, $b^2$ is among $q^{2d-2},q^{2d-4},\mathellipsis, q^{2-2d}$.
\item[{\rm (ii)}] $q^{2i}\neq 1$ for $1\leq i\leq d$.
\end{itemize}

\end{lemma}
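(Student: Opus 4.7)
The plan is to derive both parts from the fact, recorded in Definition \ref{def:main}, that the scalars $\{\theta_i\}_{i=0}^d$ are mutually distinct and the scalars $\{\theta_i^*\}_{i=0}^d$ are mutually distinct. Everything will follow from a single factorization of $\theta_i-\theta_j$.

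First I would expand $\theta_i - \theta_j$ for $0 \le i < j \le d$ using (\ref{eq:theta-assump}) and factor it as
\begin{equation*}
\theta_i - \theta_j \;=\; a^{-1}q^{-d-2i-2j}\,(q^{2j}-q^{2i})\,(a^{2} q^{2d} - q^{2i+2j}).
\end{equation*}
Since $a$ and $q$ are nonzero, the requirement $\theta_i \neq \theta_j$ for all $0 \le i < j \le d$ is equivalent to having both $q^{2j}\neq q^{2i}$ and $a^2 \neq q^{2(i+j-d)}$ for all such $i,j$. The first condition, for $1\le j-i\le d$, is exactly $q^{2k}\neq 1$ for $1\le k\le d$, which is part (ii).

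For part (i), I would observe that as $(i,j)$ runs over the pairs $0\le i<j\le d$, the integer $k:=i+j-d$ attains every value in $\{1-d,\,2-d,\,\ldots,\,d-1\}$: negative values arise from $i=0$, $j=k+d$, positive values from $j=d$, $i=k$, and $k=0$ from e.g.\ $i=0$, $j=d$. Hence the second condition says precisely that $a^2\notin\{q^{2-2d},q^{4-2d},\ldots,q^{2d-2}\}$. Running the identical argument with $\theta_i^*-\theta_j^*$ in place of $\theta_i-\theta_j$ (so with $b$ in place of $a$) yields the corresponding assertion for $b^2$, completing part (i).

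There is no real obstacle here; the lemma is a direct consequence of the closed-form expressions (\ref{eq:theta-assump}) together with the distinctness of the eigenvalue and dual eigenvalue sequences. The only step that requires care is the bookkeeping on the range of $i+j-d$ to confirm that the prohibited set for $a^2$ is exactly the one stated.
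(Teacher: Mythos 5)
Your proposal is correct and follows the same route as the paper: the paper's proof simply invokes the mutual distinctness of $\{\theta_i\}_{i=0}^d$ and $\{\theta_i^*\}_{i=0}^d$ from Definition \ref{def:main}, leaving the computation implicit. Your factorization $\theta_i-\theta_j=a^{-1}q^{-d-2i-2j}(q^{2j}-q^{2i})(a^2q^{2d}-q^{2i+2j})$ and the bookkeeping on the range of $i+j-d$ are exactly the details the paper omits, and they check out.
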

\begin{proof}
By Definition \ref{def:main} 
the $\{\theta_i\}_{i=0}^d$ are mutually distinct and
the $\{\theta_i^*\}_{i=0}^d$ are mutually distinct.    
\end{proof}

\section{The first and second split decomposition of $V$}\label{section:U}

We continue to discuss the situation of Assumption \ref{assump:main}.   
We use the following concept.  
By a {\it decomposition} of $V,$ we mean a sequence of subspaces whose direct sum is $V$.  
For example, $\{ E_iV\}_{i=0}^d$ and $\{ E_i^*V\}_{i=0}^d$ are decompositions of $V$.
In this section we consider two other decompositions of $V$ called the first and second split decomposition.\\    

For $0\leq i\leq d$ define the subspace $U_i \subseteq V$ by
\begin{align*}
U_i = (E^*_0V+E^*_1V+\cdots + E^*_iV)\cap (E_iV+E_{i+1}V+\cdots + E_dV).
\end{align*}
For notational convenience, define $U_{-1}=0$ and $U_{d+1}=0$.
We note that 
\begin{equation*}
U_i^{\Downarrow}=(E_0^*V+E_1^*V +\cdots + E_i^* V)\cap (E_0V+E_1V+\cdots + E_{d-i} V).
\end{equation*}

By \cite[Theorem 4.6]{Somealg},  
the sequence $\{U_i\}_{i=0}^d$  (resp. $\{U_i^\Downarrow\}_{i=0}^d$) is a decomposition of $V$.  We refer to $\{U_i\}_{i=0}^d$  (resp. $\{U_i^\Downarrow\}_{i=0}^d$) as the {\it first split decomposition} (resp. {\it second split decomposition}) of $V$ with respect to $\Phi$.  
By \cite[Corollary 5.7]{Somealg}, for $0\leq i\leq d$ the dimensions of
$E_iV$, $E_i^*V$, $U_i$, $U_i^\Downarrow$ coincide.  
By \cite[Lemma 4.1]{bockting}, 
\begin{equation}
U_0+U_1+\cdots +U_i=U_0^\Downarrow+U_1^\Downarrow+\cdots +U_i^\Downarrow \qquad\qquad (0\leq i\leq d).\label{eq:UUdd}\\
\end{equation}

\begin{definition}\label{def:KB}{\rm
Define $K\in {\rm End}(V)$ such that for $0\leq i\leq d$, $U_i$ is the eigenspace of $K$ with eigenvalue $q^{d-2i}$.  In other words,
\begin{align}
(K-q^{d-2i}I)U_i&=0\qquad\qquad (0\leq i\leq d).\label{eq:Kdef}
\end{align}
}\end{definition}

\begin{definition}\label{def:KB-B}{\rm
Define $B\in {\rm End}(V)$ such that for $0\leq i\leq d$, $U_i^{\Downarrow}$ is the eigenspace of $B$ with eigenvalue $q^{d-2i}$.  In other words,
\begin{align}
(B-q^{d-2i}I)U_i^\Downarrow&=0\qquad \qquad (0\leq i\leq d).\label{eq:Bdef}
\end{align}
Observe that $B=K^{\Downarrow}$.  
}\end{definition}

\bigskip

By construction each of $K, B$ is invertible and diagonalizable on $V$.  

\begin{lemma}\label{lemma:KUdd}
For $0\leq i\leq d$,
\begin{align}
(B-q^{d-2i}I)U_i&\subseteq U_0+U_1+\cdots +U_{i-1},\label{eq:BU}\\
(K-q^{d-2i}I)U_i^\Downarrow &\subseteq U_0^\Downarrow+U_1^\Downarrow+\cdots +U_{i-1}^\Downarrow.\label{eq:KUdd}
\end{align}
\end{lemma}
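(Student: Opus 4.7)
The proof proposal rests on the partial-sum identity (\ref{eq:UUdd}), which allows us to switch freely between the two split decompositions inside each initial segment $U_0+\cdots+U_i = U_0^\Downarrow+\cdots+U_i^\Downarrow$. The plan is to use this identity to rewrite an element of $U_i$ in the second split basis, apply the diagonal action of $B$, observe that the coefficient of $(B-q^{d-2i}I)$ annihilates the top-degree component, and then translate back.

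More precisely, to prove the first inclusion, I would fix $0\le i\le d$ and pick an arbitrary $u\in U_i$. By (\ref{eq:UUdd}), $u\in U_0+\cdots+U_i = U_0^\Downarrow+\cdots+U_i^\Downarrow$, so there exist $u_j^\Downarrow\in U_j^\Downarrow$ for $0\le j\le i$ with $u=\sum_{j=0}^{i} u_j^\Downarrow$. Since $B$ acts on $U_j^\Downarrow$ as $q^{d-2j}I$ by Definition \ref{def:KB-B}, we compute
\begin{equation*}
(B-q^{d-2i}I)u \;=\; \sum_{j=0}^{i}(q^{d-2j}-q^{d-2i})\,u_j^\Downarrow \;=\; \sum_{j=0}^{i-1}(q^{d-2j}-q^{d-2i})\,u_j^\Downarrow,
\end{equation*}
where the $j=i$ term drops out. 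The resulting sum lies in $U_0^\Downarrow+\cdots+U_{i-1}^\Downarrow$, and applying (\ref{eq:UUdd}) once more (at index $i-1$, with the convention that the empty sum is $0$ when $i=0$) yields $U_0^\Downarrow+\cdots+U_{i-1}^\Downarrow=U_0+\cdots+U_{i-1}$. This gives (\ref{eq:BU}).

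For the second inclusion (\ref{eq:KUdd}), I would run the identical argument with the roles of $K,B$ and of the two split decompositions interchanged: start with $u\in U_i^\Downarrow$, use (\ref{eq:UUdd}) to expand $u=\sum_{j=0}^i u_j$ with $u_j\in U_j$, apply $K$ using Definition \ref{def:KB}, cancel the top term, and convert the remainder back via (\ref{eq:UUdd}). There is no real obstacle here; the entire content of the lemma is a diagonalization/bookkeeping consequence of the single nontrivial fact (\ref{eq:UUdd}) that the two split decompositions share their initial flags, together with the fact that $K$ and $B$ are defined to act as the same scalar $q^{d-2i}$ on the $i$-th summand of their respective flags.
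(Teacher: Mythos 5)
Your proposal is correct and follows the paper's own argument: both rest on the partial-sum identity (\ref{eq:UUdd}) to pass between the two split decompositions, then use the diagonal action of $B$ (resp.\ $K$) from (\ref{eq:Bdef}) (resp.\ (\ref{eq:Kdef})) to kill the top component. You have merely written out explicitly the bookkeeping that the paper leaves implicit.
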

\begin{proof}
We first show (\ref{eq:BU}).  By (\ref{eq:UUdd}), $U_i\subseteq U_0^\Downarrow+U_1^\Downarrow+\cdots +U_i^\Downarrow$.  Use this fact along with (\ref{eq:Bdef}).

The proof of (\ref{eq:KUdd}) is similar.
\end{proof}\\

Following \cite{augTDalg} define
\begin{equation}
R = A-aK-a^{-1}K^{-1}.\label{eq:RAK}
\end{equation}
Note that
\begin{equation}
R^{\Downarrow} = A-a^{-1}B-aB^{-1}.\label{eq:RAKdd}
\end{equation}

We remark that the map $R$ from (\ref{eq:RAK}) is the same as the map $R$ given in \cite[Definition 6.1]{Somealg}.\\  

We now recall some results concerning $R$ and $R^\Downarrow$.  

\begin{lemma}{\rm \cite[Lemma 6.2]{Somealg}.}\label{lemma:RonU}
For $0\leq i\leq d$, the map $R$ acts on $U_i$ as $A-\theta_iI$, and the map $R^\Downarrow$ acts on $U_i^\Downarrow$ as $A-\theta_{d-i}I$.  
\end{lemma}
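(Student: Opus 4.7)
The proof is a direct computation from the definitions, so my plan is essentially bookkeeping rather than clever manoeuvring.

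The first step is to observe that, by Definition \ref{def:KB}, the operator $K$ acts on $U_i$ as the scalar $q^{d-2i}$, hence $K^{-1}$ acts on $U_i$ as $q^{2i-d}$. Therefore $aK + a^{-1}K^{-1}$ acts on $U_i$ as the scalar
\begin{equation*}
aq^{d-2i} + a^{-1}q^{2i-d},
\end{equation*}
which by Assumption \ref{assump:main} is exactly $\theta_i$. Substituting into the definition $R = A - aK - a^{-1}K^{-1}$ from (\ref{eq:RAK}) gives that $R$ acts on $U_i$ as $A - \theta_i I$, which is the first assertion.

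The second assertion is completely analogous, but one has to be careful with the indexing. By Definition \ref{def:KB-B}, $B$ acts on $U_i^\Downarrow$ as $q^{d-2i}$, so $a^{-1}B + aB^{-1}$ acts on $U_i^\Downarrow$ as
\begin{equation*}
a^{-1}q^{d-2i} + aq^{2i-d}.
\end{equation*}
The key observation is that the right-hand side equals $\theta_{d-i}$, since
\begin{equation*}
\theta_{d-i} = aq^{d-2(d-i)} + a^{-1}q^{2(d-i)-d} = aq^{2i-d} + a^{-1}q^{d-2i}.
\end{equation*}
Combining this with (\ref{eq:RAKdd}) yields that $R^\Downarrow$ acts on $U_i^\Downarrow$ as $A - \theta_{d-i} I$.

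There is no real obstacle here — the whole argument rests on (i) diagonalizability of $K$ and $B$ on the first and second split decompositions, which is built into their definitions, and (ii) the symmetry $\theta_{d-i}$ in the variable $a \leftrightarrow a^{-1}$, which explains why the coefficients in $R^\Downarrow$ are swapped relative to those in $R$. The mild point to watch is keeping straight which of $K$, $B$ goes with which split decomposition and which scalar pairing $(a, a^{-1})$ versus $(a^{-1}, a)$ appears; the $q$-Racah formula for $\theta_i$ makes both computations collapse to one line.
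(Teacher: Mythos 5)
Your computation is correct: since $U_i$ is by definition an eigenspace of $K$ with eigenvalue $q^{d-2i}$, the operator $aK+a^{-1}K^{-1}$ acts on $U_i$ as $aq^{d-2i}+a^{-1}q^{2i-d}=\theta_i$, and (\ref{eq:RAK}) then gives that $R$ acts on $U_i$ as $A-\theta_iI$; likewise $a^{-1}B+aB^{-1}$ acts on $U_i^\Downarrow$ as $a^{-1}q^{d-2i}+aq^{2i-d}=\theta_{d-i}$, so (\ref{eq:RAKdd}) gives the second assertion. The paper itself supplies no proof here: it cites \cite[Lemma 6.2]{Somealg}, where $R$ is defined directly in terms of the split decomposition, and relies on the remark following (\ref{eq:RAKdd}) that this $R$ coincides with the one defined by (\ref{eq:RAK}). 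Your argument is precisely the verification of that coincidence, carried out from Definitions \ref{def:KB}, \ref{def:KB-B} and the $q$-Racah form of the eigenvalues in Assumption \ref{assump:main}, so it is a self-contained substitute for the citation rather than a different mathematical idea; what it buys is independence from \cite{Somealg} at the cost of using the $q$-Racah hypothesis, which the cited lemma does not need.
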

\bigskip

By  \cite[Corollary 6.3]{Somealg}, 
we have
\begin{align}
RU_i&\subseteq U_{i+1}  &(0\leq i\leq d-1), \qquad\qquad &RU_d=0, \label{RU}\\
 R^\Downarrow U_i^\Downarrow&\subseteq U_{i+1}^\Downarrow  &(0\leq i\leq d-1), \qquad\qquad &R^\Downarrow U_d^\Downarrow=0.\nonumber 
 \end{align}
Moreover $R^{d+1}=0$ and $(R^\Downarrow)^{d+1}=0$.  
In light of these comments, we
 refer to $R$ (resp. $R^\Downarrow$) as the {\it raising map} for $\Phi$ (resp. $\Phi^\Downarrow$). \\

We now recall some results concerning $K$ and $B$.  
\begin{lemma}{\rm \cite[Section 1.1]{augTDalg}}.\label{lemma:KRKinv}
Both
\begin{equation}
KRK^{-1}=q^{-2}R,\qquad\qquad BR^{\Downarrow}B^{-1}=q^{-2}R^\Downarrow.\label{eq:KRKinv}
\end{equation}
\end{lemma}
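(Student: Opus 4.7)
The plan is to verify each identity by evaluating both sides on the relevant split decomposition of $V$, using only the eigenvalue information for $K$, $B$ and the raising action of $R$, $R^\Downarrow$.

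For the first identity, I would fix $i$ with $0\leq i\leq d$ and take an arbitrary $u\in U_i$. From Definition \ref{def:KB} the subspace $U_i$ is the eigenspace of $K$ for the eigenvalue $q^{d-2i}$, so $K^{-1}u = q^{2i-d}u$. Then (\ref{RU}) gives $Ru\in U_{i+1}$, on which $K$ acts as the scalar $q^{d-2(i+1)} = q^{-2-d+2i+d}\cdot q^{-2i+2i}$—more directly, $q^{d-2i-2}$. Composing,
\begin{equation*}
KRK^{-1}u = q^{2i-d}\,K(Ru) = q^{2i-d}\cdot q^{d-2i-2}\,Ru = q^{-2}Ru.
\end{equation*}
Since $V=\sum_{i=0}^{d}U_i$ is a direct sum, this proves $KRK^{-1}=q^{-2}R$ on all of $V$. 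The only case that needs a separate (but trivial) mention is $i=d$, where $Ru=0$ by (\ref{RU}) and so both sides vanish.

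The second identity is proved by the same argument applied to the second split decomposition: replace $U_i$ by $U_i^\Downarrow$, $K$ by $B$, and $R$ by $R^\Downarrow$. By Definition \ref{def:KB-B}, $B$ acts on $U_i^\Downarrow$ as $q^{d-2i}I$, and by the second line of (\ref{RU}) the map $R^\Downarrow$ sends $U_i^\Downarrow$ into $U_{i+1}^\Downarrow$. The same scalar computation then yields $BR^\Downarrow B^{-1}u = q^{-2}R^\Downarrow u$ for every $u\in U_i^\Downarrow$, and again summing over $i$ gives the identity on $V$.

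There is no real obstacle here; the lemma is essentially a bookkeeping consequence of Definitions \ref{def:KB}, \ref{def:KB-B} together with the raising property (\ref{RU}). The only thing worth noting is that the definitions of $R$ and $R^\Downarrow$ in (\ref{eq:RAK}), (\ref{eq:RAKdd}) are not used at all in the proof: once one knows that $R$ raises the first split decomposition and $R^\Downarrow$ raises the second, and that $K$, $B$ act as the advertised scalars on the respective components, the commutation relations follow immediately from the eigenvalue arithmetic $q^{d-2i}\cdot q^{-(d-2(i+1))} = q^{-2}$.
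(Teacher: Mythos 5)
Your proof is correct and is exactly the argument the paper intends: it proves the relations by letting $K$, $B$ act by their eigenvalues $q^{d-2i}$ on $U_i$, $U_i^\Downarrow$ and using the raising property (\ref{RU}), then summing over the split decomposition. No substantive difference from the paper's (more tersely stated) proof.
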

\begin{proof}
We first show the equation on the left in (\ref{eq:KRKinv}).  Recall that for $0\leq i\leq d$, $U_i$ is an eigenspace for $K$ with eigenvalue $q^{d-2i}$.  Use this fact along with (\ref{RU}).

The proof is similar for the equation on the right in (\ref{eq:KRKinv}).
\end{proof}

\begin{lemma}{\rm \cite[Section 1.1]{augTDalg}}.\label{lemma:AKq-Weyl}
Both
\begin{equation}
\frac{qKA-q^{-1}AK}{q-q^{-1}}=aK^2+a^{-1}I,\qquad\qquad 
\frac{qBA-q^{-1}AB}{q-q^{-1}}=a^{-1}B^2+aI. \label{eq:AKq-Weyl}
\end{equation}
\end{lemma}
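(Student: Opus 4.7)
The proof rests on solving the definition of $R$ (resp.\ $R^{\Downarrow}$) for $A$ and then using the commutation relation from Lemma \ref{lemma:KRKinv} to kill the off-diagonal terms. Concretely, from (\ref{eq:RAK}) we have $A = R + aK + a^{-1}K^{-1}$, and from (\ref{eq:RAKdd}) we have $A = R^{\Downarrow} + a^{-1}B + aB^{-1}$. Note also that $KRK^{-1} = q^{-2}R$ rewrites as $qKR = q^{-1}RK$, and similarly $qBR^{\Downarrow} = q^{-1}R^{\Downarrow}B$. These two identities are what will cause the $R$-terms and $R^{\Downarrow}$-terms to cancel out.

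For the left-hand identity in (\ref{eq:AKq-Weyl}), the plan is to substitute $A = R + aK + a^{-1}K^{-1}$ into $qKA - q^{-1}AK$, expand, and collect:
\begin{equation*}
qKA - q^{-1}AK = (qKR - q^{-1}RK) + a(q - q^{-1})K^2 + a^{-1}(q - q^{-1})I.
\end{equation*}
The parenthesized term vanishes by $qKR = q^{-1}RK$, and dividing through by $q - q^{-1}$ yields $aK^2 + a^{-1}I$, as desired. Note that $K$ commutes with $K^{-1}$, so the $a^{-1}K^{-1}$ summand in $A$ contributes $qa^{-1}I - q^{-1}a^{-1}I = a^{-1}(q-q^{-1})I$.

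For the right-hand identity, I would run the same computation with $B$ and $R^{\Downarrow}$ in place of $K$ and $R$, substituting $A = R^{\Downarrow} + a^{-1}B + aB^{-1}$. The expansion gives
\begin{equation*}
qBA - q^{-1}AB = (qBR^{\Downarrow} - q^{-1}R^{\Downarrow}B) + a^{-1}(q - q^{-1})B^2 + a(q - q^{-1})I,
\end{equation*}
where the first term vanishes by Lemma \ref{lemma:KRKinv}, and dividing by $q - q^{-1}$ produces $a^{-1}B^2 + aI$. The roles of $a$ and $a^{-1}$ are swapped compared to the $K$-case precisely because of how $a$ enters the definition of $R^{\Downarrow}$.

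There is essentially no obstacle here: the result is a direct consequence of (\ref{eq:RAK}), (\ref{eq:RAKdd}), and Lemma \ref{lemma:KRKinv}. The only thing to be careful about is tracking the exchange of $a$ and $a^{-1}$ between the two identities, which is forced by the definitions of $R$ and $R^{\Downarrow}$.
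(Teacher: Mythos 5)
Your proof is correct and is essentially the paper's argument: the paper starts from $qKR-q^{-1}RK=0$ (Lemma \ref{lemma:KRKinv}) and eliminates $R$ via (\ref{eq:RAK}), which is the same computation you perform by substituting $A=R+aK+a^{-1}K^{-1}$ into $qKA-q^{-1}AK$, and likewise for the $B$, $R^{\Downarrow}$ case.
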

\begin{proof}
First we show the equation on the left in (\ref{eq:AKq-Weyl}).  
By Lemma \ref{lemma:KRKinv}, $qKR-q^{-1}RK=0$.  In this equation, eliminate $R$ using (\ref{eq:RAK}).

The proof is similar for the equation on the right in (\ref{eq:AKq-Weyl}).
\end{proof}

We conclude this section by giving a result which relates $R$ and $R^\Downarrow$.  Combining (\ref{eq:RAK}), (\ref{eq:RAKdd}) we obtain 
\begin{equation}
R^{\Downarrow}-R=a K+a^{-1}K^{-1}-a^{-1}B-aB^{-1}.\label{eq:Rdiff}
\end{equation}

\section{Comments on the split decompositions of $V$}\label{section:Urefine}

We continue to discuss the situation of Assumption \ref{assump:main}.  
In Section \ref{section:U} we recalled the first and second split decomposition of $V$.  In this section we collect a few related facts for later use. 
   
\begin{definition}{\rm \cite[Definition 6.1]{bockting}. \label{def:K_i}
 For $0\leq i\leq d/2$, define the subspace $K_i\subseteq V$ by
\begin{equation*}
K_i=(E_0^*V+E_1^*V+\cdots+E_i^*V)\cap(E_iV+E_{i+1}V+\cdots+E_{d-i}V).
\end{equation*}
Observe that $K_0=E_0^*V=U_0$ and $K_i^\Downarrow=K_i$.  Also note that $K_i=U_i\cap U_i^\Downarrow$.
}\end{definition}

\begin{lemma}{\rm \cite[Theorem 4.7]{refinement}}.\label{lemma:refinement}
For $0\leq j\leq d$ the following sum is direct:
\begin{equation*}
U_j=\sum_{i=0}^{min\{j,d-j\}} \tau_{ij}(A) K_i.
\end{equation*} 
\end{lemma}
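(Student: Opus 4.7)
The plan is to pass from $\tau_{ij}(A)$ to a power of the raising map $R$, and then prove the decomposition by a Lefschetz-style injectivity argument combined with a dimension count.

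First I would observe that for $v \in K_i \subseteq U_i$, Lemma~\ref{lemma:RonU} gives $(A-\theta_i I)v = Rv$, which by (\ref{RU}) lies in $U_{i+1}$. Applying $(A-\theta_{i+1}I)$ to $Rv$ again invokes Lemma~\ref{lemma:RonU}, now with $i$ replaced by $i+1$, yielding $R^2 v \in U_{i+2}$. Iterating along the split decomposition gives
\[
\tau_{ij}(A)\,v \;=\; R^{j-i}\,v \qquad (v \in K_i,\ i \le j \le d-i).
\]
In particular $\tau_{ij}(A)K_i \subseteq U_j$, and the lemma reduces to proving that
\[
U_j \;=\; \bigoplus_{i=0}^{\min\{j,d-j\}} R^{j-i} K_i.
\]

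I would handle this reformulation in three stages. First, show that $R^{j-i}$ is injective on $K_i$ whenever $i \le j \le d-i$. This is a Lefschetz-type statement for the raising operator $R$ which, in the $q$-Racah setting, reflects the $U_q(\mathfrak{sl}_2)$-module structure constructed later in Section~\ref{section:Umod}: there $R$ is a scalar multiple of the Chevalley generator $f$, and $K_i$ plays the role of the space of primitive vectors sitting at the top of irreducible submodules of weight depending on $i$. Second, verify directness of the sum. The natural tool is the lowering map $\psi$ from Section~\ref{section:psi}: since $\psi U_k \subseteq U_{k-1}$ and $\psi$ interacts with $R$ in a controlled $q$-commutator fashion, iteratively applying $\psi$ to a putative dependence $\sum_i R^{j-i} v_i = 0$ (with $v_i \in K_i$) peels off the summands one at a time, exploiting that $K_i \subseteq U_i \cap U_i^\Downarrow$ is doubly bottom. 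Third, conclude equality by dimension. The shape $\rho_k := \dim U_k = \dim E_k V$ is symmetric and unimodal, and the telescoping identity
\[
\rho_j \;=\; \sum_{i=0}^{\min\{j,d-j\}} (\rho_i - \rho_{i-1}) \;=\; \sum_{i=0}^{\min\{j,d-j\}} \dim K_i
\]
matches $\sum_i \dim R^{j-i} K_i$ once injectivity is in hand.

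The main obstacle is step one: establishing that $R^{j-i}$ is injective on $K_i$ throughout its allowed range. Everything downstream is essentially bookkeeping once this Lefschetz-style statement is available. In practice the cleanest route is to quote Theorem~4.7 of \cite{refinement} for precisely this kind of refined split decomposition, and then use the identification $\tau_{ij}(A)\big|_{K_i} = R^{j-i}\big|_{K_i}$ derived above to translate the cited result back into the form stated in the lemma.
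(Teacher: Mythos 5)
The paper offers no proof of this lemma: it is quoted verbatim from \cite[Theorem 4.7]{refinement}, and your closing move --- quote that theorem --- is exactly what the paper does. Note, though, that once you cite it there is nothing to ``translate back'': Nomura's statement is literally the displayed equation, and your (correct) observation that $\tau_{ij}(A)v=R^{j-i}v$ for $v\in K_i$, $i\le j\le d-i$, is not needed for the citation (the paper does use that identity later, in the proof of Lemma \ref{lemma:X}).

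As an attempt at an independent proof, however, the sketch has a genuine gap. The Lefschetz-type injectivity of $R^{j-i}$ on $K_i$ is essentially the whole content of the theorem, and you leave it unproved; moreover, the tools you propose for it and for directness are circular within this paper's development. The $U_q(\mathfrak{sl}_2)$-module structure of Section \ref{section:Umod} cannot be invoked here, since Lemma \ref{lemma:Mv-mod} is proved using Corollary \ref{cor:Mv1} and Lemma \ref{lemma:refinement} itself; likewise the formula (\ref{eq:psiclar}) for $\psi$ on the summands $\tau_{ij}(A)K_i$ and Lemma \ref{lemma:psiUkernel} presuppose the refined decomposition. The dimension count has the same problem: the identity $\dim K_i=\dim E_iV-\dim E_{i-1}V$ is not an independently available fact --- it is part of what Nomura's refinement analysis establishes --- so the telescoping step assumes what is to be proved. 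A self-contained argument would require reproving the core of \cite[Theorem 4.7]{refinement} (the injectivity statement together with the dimension of $K_i$) directly from the split-decomposition data; otherwise the honest route is the one the paper takes, namely the bare citation.
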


\begin{cor} {\rm \cite[Theorem 4.8]{refinement}}. \label{cor:Urefine}
The following sum is direct:
\begin{eqnarray*}
V=\sum_{i=0}^{\lfloor d/2\rfloor}\sum_{j=i}^{d-i} \tau_{ij}(A)K_i.\label{eq:Urefine1}
\end{eqnarray*}
\end{cor}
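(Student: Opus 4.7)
The plan is to obtain Corollary \ref{cor:Urefine} by substituting the refined description of each $U_j$ from Lemma \ref{lemma:refinement} into the first split decomposition $V=\bigoplus_{j=0}^d U_j$, and then reversing the order of summation.

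First I would note that by \cite[Theorem 4.6]{Somealg}, recalled in Section \ref{section:U}, the sum $V=\sum_{j=0}^d U_j$ is direct. For each $j$ with $0\leq j\leq d$, Lemma \ref{lemma:refinement} gives the direct sum decomposition
\[
U_j=\bigoplus_{i=0}^{\min\{j,\,d-j\}} \tau_{ij}(A)K_i.
\]
Substituting this into the first split decomposition yields
\[
V=\bigoplus_{j=0}^{d}\bigoplus_{i=0}^{\min\{j,\,d-j\}} \tau_{ij}(A)K_i.
\]

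Next I would re-index the double sum. The pairs $(i,j)$ appearing above are precisely those satisfying $0\leq i\leq \min\{j,d-j\}$ with $0\leq j\leq d$. The conditions $i\leq j$ and $i\leq d-j$ are equivalent to $i\leq j\leq d-i$, which in turn forces $i\leq d/2$, i.e. $0\leq i\leq \lfloor d/2\rfloor$. Interchanging the order of summation therefore gives
\[
V=\sum_{i=0}^{\lfloor d/2\rfloor}\sum_{j=i}^{d-i}\tau_{ij}(A)K_i,
\]
as desired. Directness of this reindexed sum is immediate: the original iterated sum is direct in both the outer index $j$ (by \cite[Theorem 4.6]{Somealg}) and, for each fixed $j$, the inner index $i$ (by Lemma \ref{lemma:refinement}), and reindexing only permutes the summands.

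There is no serious obstacle here; the argument is entirely bookkeeping. The only point requiring mild care is confirming that the inequality $i\leq\min\{j,d-j\}$ translates correctly to the reversed-order bounds $i\leq j\leq d-i$ with $0\leq i\leq\lfloor d/2\rfloor$, and that directness passes through the reindexing, both of which are routine.
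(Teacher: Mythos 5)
Your argument is correct: combining the directness of the first split decomposition $V=\sum_{j=0}^d U_j$ with Lemma \ref{lemma:refinement} and reindexing the pairs $(i,j)$ is exactly the routine derivation behind this statement, which the paper itself does not prove but simply cites from \cite[Theorem 4.8]{refinement}. Your bookkeeping (the equivalence of $0\leq i\leq\min\{j,d-j\}$ with $i\leq j\leq d-i$, $0\leq i\leq\lfloor d/2\rfloor$, and the fact that directness is preserved under substituting direct decompositions of the summands) is sound, so there is nothing to correct.
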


The refinement of the first split decomposition given above yields the following description of the kernel of the map $R$ from Section \ref{section:U}.

\begin{lemma}\label{lemma:Rkernel}
For $0\leq i< d/2$, the restriction of $R$ to $U_i$ is injective.  
For $d/2\leq i\leq d$, the restriction of $R$ to $U_{i}$ is surjective with kernel $\tau_{d-i,i}(A)K_{d-i}$.  
Moreover the kernel of $R$ on $V$ is $\sum_{i=0}^{\lfloor d/2\rfloor} \tau_{i,d-i}(A)K_i$.
\end{lemma}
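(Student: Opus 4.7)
The plan is to analyze $R$ on each summand $U_j$ of the first split decomposition via the refinement
\[
U_j=\sum_{i=0}^{\min\{j,d-j\}} \tau_{ij}(A) K_i
\]
from Lemma \ref{lemma:refinement}, combined with the fact that $R$ acts on $U_j$ as $A-\theta_j I$ (Lemma \ref{lemma:RonU}). For any $v_i\in K_i$ one has
\[
R\bigl(\tau_{ij}(A)v_i\bigr)=(A-\theta_j I)\tau_{ij}(A)v_i=\tau_{i,j+1}(A)v_i.
\]
When $i\leq d-j-1$, the right hand side is a summand in the refinement of $U_{j+1}$ given by Lemma \ref{lemma:refinement} at index $j+1$. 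When $i=d-j$ (which forces $j\geq d/2$), the inclusion $K_{d-j}\subseteq E_{d-j}V+\cdots+E_jV$ from Definition \ref{def:K_i}, together with the fact that $\tau_{d-j,j+1}(A)=(A-\theta_{d-j})(A-\theta_{d-j+1})\cdots(A-\theta_j)$ vanishes on every $E_kV$ for $d-j\leq k\leq j$, gives the crucial annihilation $\tau_{d-j,j+1}(A)K_{d-j}=0$.

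With these observations and the direct sum decomposition of $V$ in Corollary \ref{cor:Urefine}, I would split into two cases. For $0\leq j<d/2$, every refinement index $i$ of $U_j$ satisfies $i\leq j\leq d-j-1$, so the terms $\tau_{i,j+1}(A)v_i$ of $Rv$ lie in distinct summands of Corollary \ref{cor:Urefine}. If $Rv=0$, each $\tau_{i,j+1}(A)v_i=0$, and injectivity of $\tau_{i,j+1}(A)\colon K_i\to\tau_{i,j+1}(A)K_i$ (a consequence of Lemma \ref{lemma:refinement} applied at $j+1$ together with dimension counting) forces $v_i=0$, so $R|_{U_j}$ is injective. For $d/2\leq j\leq d$, one has $\min\{j,d-j\}=d-j$, the $i=d-j$ term of $Rv$ vanishes by the annihilation above, and the same direct-sum argument applied to $\sum_{i=0}^{d-j-1}\tau_{i,j+1}(A)v_i=0$ yields $v_i=0$ for $0\leq i\leq d-j-1$. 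Hence $\ker R|_{U_j}=\tau_{d-j,j}(A)K_{d-j}$. Surjectivity onto $U_{j+1}=\sum_{i=0}^{d-j-1}\tau_{i,j+1}(A)K_i$ is then immediate, since $\sum_{i=0}^{d-j-1}\tau_{i,j+1}(A)w_i$ is the image of $\sum_{i=0}^{d-j-1}\tau_{ij}(A)w_i\in U_j$.

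Finally, $V=\bigoplus_{j=0}^d U_j$ and $RU_j\subseteq U_{j+1}$, so the kernel of $R$ on $V$ is $\bigoplus_{j=0}^d \ker R|_{U_j}$. The summands with $j<d/2$ vanish, and reindexing the remaining contributions via $i=d-j$ (so $0\leq i\leq\lfloor d/2\rfloor$) yields $\ker R=\sum_{i=0}^{\lfloor d/2\rfloor}\tau_{i,d-i}(A)K_i$. The main obstacle is the annihilation $\tau_{d-j,j+1}(A)K_{d-j}=0$: this is the single point where the behavior differs from the ``generic'' case, and once it is in hand, the rest is bookkeeping within the direct sum structure of the refined first split decomposition.
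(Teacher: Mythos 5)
Your argument is correct and reaches the same conclusion as the paper, but by a somewhat more self-contained route. The paper's proof imports the injectivity and surjectivity statements wholesale from \cite[Lemma 6.5]{Somealg} and only uses the refinement machinery (Lemma \ref{lemma:RonU}, (\ref{eq:tau}), Lemma \ref{lemma:refinement}) to identify the kernel on $U_i$ for $d/2\leq i\leq d$; you instead derive injectivity, surjectivity, and the kernel all at once from the identity $R\,\tau_{ij}(A)v_i=\tau_{i,j+1}(A)v_i$ on the refinement summands, the directness in Corollary \ref{cor:Urefine}, and the annihilation $\tau_{d-j,j+1}(A)K_{d-j}=0$ coming from Definition \ref{def:K_i}. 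What your version buys is independence from the external reference and an explicit picture of how $R$ permutes the summands $\tau_{ij}(A)K_i$; what the paper's version buys is brevity.

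One justification should be repaired. You claim that the injectivity of $\tau_{i,j+1}(A)\colon K_i\to\tau_{i,j+1}(A)K_i$ follows from ``Lemma \ref{lemma:refinement} applied at $j+1$ together with dimension counting.'' Lemma \ref{lemma:refinement} only asserts that the sum $U_{j+1}=\sum_i\tau_{i,j+1}(A)K_i$ is direct; it says nothing about $\dim\tau_{i,j+1}(A)K_i$ versus $\dim K_i$, and a dimension count based on it alone is circular. The fact you need is nevertheless in the paper: by Corollary \ref{cor:Mv1} (equivalently Lemma \ref{lemma:MK}), for $0\neq v\in K_i$ the vectors $\tau_{ik}(A)v$ with $i\leq k\leq d-i$ form a basis of $Mv$, so in particular $\tau_{i,j+1}(A)v\neq 0$ whenever $i\leq d-j-1$. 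With that citation in place of the dimension-counting remark, your proof is complete.
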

\begin{proof}
The claims concerning injectivity and surjectivity follow from \cite[Lemma 6.5]{Somealg}.
Let $d/2\leq i\leq d$.  We now show that the kernel of $R$ on $U_i$ is $\tau_{d-i,i}(A)K_{d-i}$. 
Recall that $R$ acts on $U_i$ as $A-\theta_i I$ and $RU_i\subseteq U_{i+1}$.  
The result follows from this along with (\ref{eq:tau}) and Lemma \ref{lemma:refinement}.
\end{proof}\\

Let $M$ denote the subalgebra of ${\rm End}(V)$ generated by $A$.
For the rest of this section, we view $V$ as an $M$-module.  
From this point of view, for $0\leq i\leq d/2$, $MK_i$ is  
the $M$-submodule of $V$ generated by $K_i$.

\begin{lemma}{\rm \cite[Lemma 8.2]{bockting}}.\label{lemma:MK}
For $0\leq i\leq d/2$ such that $K_i\neq 0$, the sum
\begin{equation}
MK_i=K_i+\tau_{i,i+1}(A)K_i+\tau_{i,i+2}(A)K_i+\cdots+\tau_{i,d-i}(A)K_i\label{eq:MK1}
\end{equation}
is direct.  Moreover $\tau_{i,d-i+1}$ is the minimal polynomial for the action of $A$ on $MK_i$.
\end{lemma}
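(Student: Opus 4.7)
The crucial observation is that the map $\tau_{i,i+j}(A)$ acts on $K_i$ as $R^j$. This follows by induction from Lemma \ref{lemma:RonU}: for $v \in K_i \subseteq U_i$, we have $\tau_{i,i+j}(A)v \in U_{i+j}$, and since $R$ acts on $U_{i+j}$ as $A - \theta_{i+j}I$, we get $\tau_{i,i+j+1}(A)v = (A-\theta_{i+j}I)\tau_{i,i+j}(A)v = R^{j+1}v$. Consequently $\tau_{i,i+j}(A)K_i \subseteq U_{i+j}$ for $0 \leq j \leq d-2i$. Directness of the sum $\sum_{j=0}^{d-2i} \tau_{i,i+j}(A)K_i$ is then immediate, since the summands lie in distinct $U_{i+j}$ and $\{U_k\}_{k=0}^d$ is a decomposition of $V$. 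Moreover Lemma \ref{lemma:Rkernel} identifies $\ker R|_{U_{d-i}}$ with $\tau_{i,d-i}(A)K_i$ (using $d-i \geq d/2$), so $\tau_{i,d-i+1}(A)K_i = R\,\tau_{i,d-i}(A)K_i = 0$.

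To identify this sum with $MK_i$, I would show that it is $A$-invariant. The polynomial identity $x\tau_{i,i+j}(x) = \theta_{i+j}\tau_{i,i+j}(x) + \tau_{i,i+j+1}(x)$ combined with $\tau_{i,d-i+1}(A)K_i = 0$ shows that $A$ maps each summand into the total sum. Since $K_i$ is contained in the sum and the sum is $A$-invariant, it contains $MK_i$; the reverse inclusion is clear.

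For the minimal polynomial claim, the first step is to establish that $\tau_{i,i+j}(A)|_{K_i}$ is injective for $0 \leq j \leq d-2i$, by induction on $j$. Suppose $R^j v = 0$ for some $v \in K_i$, and set $w = R^{j-1}v$, which lies in $\tau_{i,k}(A)K_i \cap \ker R|_{U_k}$ with $k = i+j-1$. If $k < d/2$ the kernel is zero; otherwise $\ker R|_{U_k} = \tau_{d-k,k}(A)K_{d-k}$ by Lemma \ref{lemma:Rkernel}, and the constraint $j \leq d-2i$ forces $d-k > i$, so by Lemma \ref{lemma:refinement} the summands $\tau_{i,k}(A)K_i$ and $\tau_{d-k,k}(A)K_{d-k}$ intersect trivially and $w = 0$. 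The inductive hypothesis then finishes the step. Given this injectivity, any polynomial $f$ of degree $\leq d-2i$ annihilating $K_i$ can be expanded in the basis $\{\tau_{i,i+j}\}_{j=0}^{d-2i}$ of polynomials of degree at most $d-2i$, and the established directness forces all coefficients to vanish, so $f = 0$. Thus $\tau_{i,d-i+1}$, being monic of degree $d-2i+1$ and annihilating $MK_i$, is the minimal polynomial. The main obstacle is this injectivity step, which requires combining the kernel description from Lemma \ref{lemma:Rkernel} with the directness of Lemma \ref{lemma:refinement} to exclude that $R^{j-1}v$ accidentally lands inside $\ker R|_{U_k}$ once $k \geq d/2$.
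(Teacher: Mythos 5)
Your argument is correct, and there is nothing in the paper to compare it against line by line: the paper states this lemma as a quotation of \cite[Lemma 8.2]{bockting} and gives no proof. What you have produced is a self-contained derivation from the facts the paper does record earlier, and each step checks out. The identification $\tau_{i,i+j}(A)v=R^jv$ for $v\in K_i$ follows from Lemma \ref{lemma:RonU} together with $RU_k\subseteq U_{k+1}$, and it places $\tau_{i,i+j}(A)K_i$ inside $U_{i+j}$, so directness of (\ref{eq:MK1}) is immediate from the first split decomposition; the vanishing $\tau_{i,d-i+1}(A)K_i=0$ is correctly read off from the kernel description in Lemma \ref{lemma:Rkernel} applied to $U_{d-i}$; and the recursion $x\tau_{i,i+j}(x)=\theta_{i+j}\tau_{i,i+j}(x)+\tau_{i,i+j+1}(x)$ gives $A$-invariance, hence equality with $MK_i$. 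The part that genuinely needs work is, as you say, the injectivity of $R^j$ on $K_i$ for $j\leq d-2i$ (directness alone would not rule out a summand being zero, which would lower the degree of the minimal polynomial), and your induction handles it correctly: for $k=i+j-1<d/2$ the kernel is zero by Lemma \ref{lemma:Rkernel}, while for $k\geq d/2$ the kernel is $\tau_{d-k,k}(A)K_{d-k}$, and since $j\leq d-2i$ forces $d-k\geq i+1$, this is a summand of $U_k$ distinct from $\tau_{i,k}(A)K_i$ in the direct sum of Lemma \ref{lemma:refinement}, so the intersection is trivial. Note that your injectivity statement is essentially the content of Corollary \ref{cor:Mv1}, which the paper also only quotes from \cite{bockting}; so your proof in effect supplies that ingredient as well rather than assuming it, which is the right thing to do to avoid circularity.
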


\begin{cor}{\rm \cite[Corollary 8.3]{bockting}}.\label{cor:Mv1}
For $0\leq i\leq d/2$ and $0\neq v\in K_i$, the vector space $Mv$ has a basis
\begin{equation*}
v,\quad \tau_{i,i+1}(A)v,\quad \tau_{i,i+2}(A) v,\quad \mathellipsis,\quad \tau_{i,d-i}(A)v.
\end{equation*}
\end{cor}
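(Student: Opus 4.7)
The plan is to apply Lemma \ref{lemma:MK} directly to a single nonzero vector $v \in K_i$, verifying both spanning and linear independence of the asserted basis.

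First I would establish that the vectors span $Mv$. Let $W$ denote the subspace of $V$ spanned by $v,\tau_{i,i+1}(A)v,\ldots,\tau_{i,d-i}(A)v$. Clearly $W \subseteq Mv$. For the reverse inclusion I would show $W$ is $A$-invariant, using the identity $A\tau_{ij}(A) = \tau_{i,j+1}(A) + \theta_j\tau_{ij}(A)$. For $i\leq j<d-i$, both terms on the right keep us inside $W$. For $j=d-i$, we have $A\tau_{i,d-i}(A)v = \tau_{i,d-i+1}(A)v + \theta_{d-i}\tau_{i,d-i}(A)v$, and the first term vanishes because $v \in K_i \subseteq MK_i$ and Lemma \ref{lemma:MK} asserts that $\tau_{i,d-i+1}$ is the minimal polynomial of $A$ on $MK_i$. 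Hence $W$ is $A$-invariant and contains $v$, so $Mv \subseteq W$, giving $Mv = W$.

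Next I would prove linear independence. Suppose $\sum_{j=i}^{d-i} c_j\tau_{ij}(A)v = 0$ with $c_j \in \K$. Each term $c_j\tau_{ij}(A)v$ lies in $\tau_{ij}(A)K_i$, so by the directness of the sum $MK_i = K_i + \tau_{i,i+1}(A)K_i + \cdots + \tau_{i,d-i}(A)K_i$ from Lemma \ref{lemma:MK}, we have $c_j\tau_{ij}(A)v = 0$ for every $j$. It remains to show $\tau_{ij}(A)v \neq 0$ for each $j \in [i,d-i]$ when $v\neq 0$; the case $i = d/2$ is trivial, so I assume $i < d/2$.

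I would prove non-vanishing by induction on $j$. The base case $j=i$ is immediate since $\tau_{ii} = 1$. For the inductive step, suppose $\tau_{ij}(A)v \neq 0$ for some $j < d-i$. Since $\tau_{ij}(A)v \in \tau_{ij}(A)K_i \subseteq U_j$ (Lemma \ref{lemma:refinement}) and $R$ acts on $U_j$ as $A-\theta_jI$ (Lemma \ref{lemma:RonU}), we have $R\tau_{ij}(A)v = \tau_{i,j+1}(A)v$. By Lemma \ref{lemma:Rkernel}, either $R$ is injective on $U_j$ (when $j<d/2$) or $\ker R|_{U_j} = \tau_{d-j,j}(A)K_{d-j}$ (when $j\geq d/2$). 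In the latter case, $j<d-i$ gives $d-j > i$, so the summands $\tau_{ij}(A)K_i$ and $\tau_{d-j,j}(A)K_{d-j}$ are distinct in the direct sum of Lemma \ref{lemma:refinement} and meet only at $0$. Either way $\tau_{ij}(A)v \notin \ker R|_{U_j}$, so $\tau_{i,j+1}(A)v \neq 0$. The main technical hurdle is precisely this index bookkeeping for $j \geq d/2$, which relies on the refined direct sum of Lemma \ref{lemma:refinement} to separate distinct $K_k$-components inside $U_j$.
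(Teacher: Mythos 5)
Your proof is correct, and every ingredient you invoke (Lemmas \ref{lemma:refinement}, \ref{lemma:RonU}, \ref{lemma:Rkernel}, \ref{lemma:MK}) is available in the paper. Note that the paper itself offers no proof of Corollary \ref{cor:Mv1}: it is quoted from \cite[Corollary~8.3]{bockting}, where it is deduced from \cite[Lemma~8.2]{bockting} together with the fact, coming from the refinement machinery of \cite{refinement}, that $\tau_{ij}(A)$ acts injectively on $K_i$ for $i\leq j\leq d-i$. Your route differs in exactly that last point: instead of citing injectivity of $\tau_{ij}(A)$ on $K_i$, you re-derive the non-vanishing of $\tau_{ij}(A)v$ by induction on $j$, using that $R$ acts on $U_j$ as $A-\theta_jI$ and the kernel description in Lemma \ref{lemma:Rkernel}; the index check $i<d-j$ separating $\tau_{ij}(A)K_i$ from the kernel summand $\tau_{d-j,j}(A)K_{d-j}$ inside the direct sum of Lemma \ref{lemma:refinement} is exactly the right point to verify, and you handle it correctly (including the degenerate possibilities $K_{d-j}=0$ and $i=d/2$). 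Your spanning argument via $A$-invariance of $W$, using $A\tau_{ij}(A)=\tau_{i,j+1}(A)+\theta_j\tau_{ij}(A)$ and the minimal-polynomial statement of Lemma \ref{lemma:MK}, is also sound (and the hypothesis $K_i\neq 0$ of that lemma holds since $v\neq 0$). The payoff of your version is that it is self-contained relative to what this paper quotes, at the cost of the extra bookkeeping with $R$; the original argument is shorter if one is willing to import the injectivity statement from \cite{refinement}/\cite{bockting}.
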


\begin{prop}{\rm \cite[Lemma 8.4]{bockting}}.\label{prop:VMK_i}  
The following is a direct sum of $M$-modules:
\begin{eqnarray}
&V = \displaystyle\sum_{i=0}^{\lfloor d/2\rfloor} MK_i.\label{eq:VMK_i}
\end{eqnarray}
\end{prop}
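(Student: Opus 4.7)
The plan is to reduce the claim to the already-established refinement in Corollary \ref{cor:Urefine} together with the description of $MK_i$ from Lemma \ref{lemma:MK}. First I would note that each $MK_i$ is, by its very definition as the $M$-submodule generated by $K_i$, an $M$-submodule of $V$. Consequently, calling the sum $\sum_{i=0}^{\lfloor d/2\rfloor} MK_i$ a direct sum of $M$-modules amounts to exhibiting the vector-space equality $V = \bigoplus_{i=0}^{\lfloor d/2\rfloor} MK_i$.

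Next I would invoke Lemma \ref{lemma:MK}: for each $i$ with $K_i \neq 0$,
\[
MK_i \;=\; K_i + \tau_{i,i+1}(A)K_i + \tau_{i,i+2}(A)K_i + \cdots + \tau_{i,d-i}(A)K_i
\]
with the sum on the right direct; when $K_i = 0$, we have $MK_i = 0$ trivially and the formula still holds. Substituting this expression into the (direct) sum of Corollary \ref{cor:Urefine},
\[
V \;=\; \sum_{i=0}^{\lfloor d/2\rfloor} \sum_{j=i}^{d-i} \tau_{ij}(A)K_i,
\]
immediately yields $V = \sum_{i=0}^{\lfloor d/2\rfloor} MK_i$. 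Directness of this coarser sum is inherited from the directness of the finer double sum: if $\sum_i m_i = 0$ with $m_i \in MK_i$, then expanding each $m_i$ in the basis $\{\tau_{ij}(A)K_i\}_{j=i}^{d-i}$ gives a vanishing linear combination in the direct sum of Corollary \ref{cor:Urefine}, forcing each $m_i = 0$.

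There really is no substantive obstacle here; the proposition is essentially a re-bracketing of Corollary \ref{cor:Urefine} along the $i$-index, with each group of $j$-summands collapsed into $MK_i$ via Lemma \ref{lemma:MK}. The only bookkeeping point to be careful about is allowing for possibly empty $K_i$, for which $MK_i = 0$ contributes nothing and the formulas continue to hold.
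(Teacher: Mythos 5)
Your argument is correct. Note that the paper does not prove this proposition at all---it is imported as a citation to \cite[Lemma 8.4]{bockting}---so there is no in-paper proof to compare against; your re-derivation, which re-brackets the direct sum of Corollary \ref{cor:Urefine} along the index $i$ and identifies each block $\sum_{j=i}^{d-i}\tau_{ij}(A)K_i$ with $MK_i$ via Lemma \ref{lemma:MK} (noting $\tau_{ii}=1$, and $MK_i=0$ when $K_i=0$), is a sound and self-contained way to obtain the result from facts already stated in the paper. The only cosmetic quibble is that $\{\tau_{ij}(A)K_i\}_{j=i}^{d-i}$ is a direct sum decomposition of $MK_i$, not a basis, but your directness argument (expand $m_i=\sum_j m_{ij}$ and invoke the directness of the finer sum) is exactly right.
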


\section{The linear transformation $\psi$}\label{section:psi}

We continue to discuss the situation of Assumption \ref{assump:main}.   
In \cite[Section 11]{bockting} we introduced an element $\Psi\in {\rm End}(V)$.  
For our present purpose it is convenient to use the normalization $\psi=(q-q^{-1})(q^d-q^{-d})\Psi$.

\begin{lemma}{\rm \cite[Lemma 11.7]{bockting}}.\label{lemma:Rpsi}\label{lemma:psidef'}
The map $\psi$ is the unique element of ${\rm End}(V)$ such that both 
\begin{equation}
\psi R - R\psi = (q-q^{-1})(K-K^{-1}) \label{psiR}
\end{equation}
and
$\psi K_i=0$ for $0\leq i\leq d/2$.
\end{lemma}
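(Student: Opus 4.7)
The plan is to treat existence and uniqueness separately. For existence, I would take $\psi = (q-q^{-1})(q^d-q^{-d})\Psi$ as defined in \cite[Section 11]{bockting}, where the commutation relation $\psi R - R\psi = (q-q^{-1})(K-K^{-1})$ and the vanishing $\psi K_i = 0$ for $0 \leq i \leq d/2$ are essentially recorded in the relevant results there. The normalization factor $(q-q^{-1})(q^d-q^{-d})$ is precisely what turns the commutator relation satisfied by $\Psi$ into the clean form displayed in (\ref{psiR}), and $\psi K_i = 0$ is inherited directly from the analogous property of $\Psi$. This is nonzero thanks to Lemma \ref{note:main}(ii).

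For uniqueness, which is the substantive part of the statement, suppose $\psi' \in {\rm End}(V)$ also satisfies (\ref{psiR}) and $\psi' K_i = 0$ for $0 \leq i \leq d/2$. Set $\Delta = \psi - \psi'$. Then $\Delta$ commutes with $R$ and vanishes on each $K_i$. By Proposition \ref{prop:VMK_i}, $V = \bigoplus_{i=0}^{\lfloor d/2 \rfloor} MK_i$, so it suffices to show that $\Delta$ annihilates $MK_i$ for each $i$.

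Fix $i$ and $v \in K_i$. By Lemma \ref{lemma:RonU}, $R$ acts on $U_i$ as $A - \theta_i I = \tau_{i,i+1}(A)$, so $Rv = \tau_{i,i+1}(A) v$. Iterating and using that $R$ carries $U_j$ into $U_{j+1}$ and acts there as $A - \theta_j I$, one obtains $R^k v = \tau_{i,i+k}(A) v$ for $0 \leq k \leq d-2i$. By Corollary \ref{cor:Mv1}, these vectors form a basis of $Mv$. Hence $MK_i$ is the span of $R^k K_i$ for $0 \leq k \leq d-2i$. Since $\Delta$ commutes with $R$ and kills $K_i$, we have $\Delta(R^k w) = R^k(\Delta w) = 0$ for $w \in K_i$, so $\Delta$ vanishes on $MK_i$. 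Combined with the direct-sum decomposition, this gives $\Delta = 0$ and hence $\psi' = \psi$.

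The main obstacle here is not the uniqueness argument, which is a clean combination of Proposition \ref{prop:VMK_i}, Corollary \ref{cor:Mv1}, and Lemma \ref{lemma:RonU}, but rather the existence: verifying that the concrete $\psi$ from \cite{bockting} does satisfy the bracket relation (\ref{psiR}) with the specified constant on the right. That verification is a direct appeal to the computations in \cite[Section 11]{bockting}, which is why the lemma is attributed there; once that is granted, the $M$-module description of $V$ collapses the rest of the proof quickly.
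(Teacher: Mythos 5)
Your proposal is correct, but note that the paper itself offers no proof of this lemma: it is imported verbatim from \cite[Lemma 11.7]{bockting}, so your treatment of existence (appeal to the computations in \cite{bockting}, with the normalization $(q-q^{-1})(q^d-q^{-d})\neq 0$ justified by Lemma \ref{note:main}(ii)) coincides with what the paper does. What you add is a self-contained uniqueness argument, and it is sound: subtracting the two instances of (\ref{psiR}) gives that $\Delta=\psi-\psi'$ commutes with $R$ and kills each $K_i$; since $v\in K_i\subseteq U_i$ and $RU_j\subseteq U_{j+1}$ with $R$ acting on $U_j$ as $A-\theta_jI$ (Lemma \ref{lemma:RonU}, (\ref{RU})), induction gives $R^kv=\tau_{i,i+k}(A)v$ for $0\leq k\leq d-2i$, so by Corollary \ref{cor:Mv1} these vectors span $Mv$ and hence $MK_i$ is spanned by $R^kK_i$; commutation then kills $\Delta$ on each $MK_i$, and Proposition \ref{prop:VMK_i} finishes. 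This is essentially the same machinery the paper deploys later, in Lemma \ref{lemma:X} and the lemma following it, to prove the variant characterization of $\psi$ in which $\psi K_i=0$ is replaced by $\psi U_i\subseteq U_{i-1}$; your situation is in fact easier than Lemma \ref{lemma:X}, since $\Delta K_i=0$ is a hypothesis rather than something that must be extracted via the injectivity statements of Lemma \ref{lemma:Rkernel}. So the proposal is a valid reconstruction of the cited result's uniqueness half, by the same route the paper uses for its closest analogue.
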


We now clarify the meaning of $\psi$. Recall the decomposition of $V$ given in Corollary \ref{cor:Urefine} and consider the summand $\tau_{ij}(A)K_i$.  We describe the action of $\psi$ on this summand.  By \cite[Equation (58)]{bockting}, for $v\in K_i$,
\begin{equation}
\psi\tau_{ij}(A)v=(q^{j-i}-q^{i-j})(q^{d-i-j+1}-q^{i+j-d-1})\tau_{i,j-1}(A)v.\label{eq:psiclar}
\end{equation}
  We note that the following hold on $\tau_{ij}(A)K_i$:
\begin{align*}
R\psi&=(q^{j-i}-q^{i-j})(q^{d-i-j+1}-q^{i+j-d-1})I,\\
\psi R&=(q^{j-i+1}-q^{i-j-1})(q^{d-i-j}-q^{i+j-d})I.
\end{align*}

\begin{lemma}{\rm \cite[Corollary 15.2]{bockting}}.\label{lemma:psiequal}
With reference to Lemma \ref{lemma:psidef'},
$\psi^{\Downarrow}=\psi$.
\end{lemma}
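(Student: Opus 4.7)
The plan is to invoke the uniqueness statement of Lemma~\ref{lemma:psidef'} applied to the second inversion $\Phi^\Downarrow$. Under inversion, $R$ is replaced by $R^\Downarrow$, and $K$ is replaced by $B=K^\Downarrow$ (Definition~\ref{def:KB-B}); moreover $K_i^\Downarrow=K_i$ by Definition~\ref{def:K_i}. So Lemma~\ref{lemma:psidef'} characterizes $\psi^\Downarrow$ as the unique element of ${\rm End}(V)$ satisfying
\begin{equation*}
\psi^\Downarrow R^\Downarrow-R^\Downarrow\psi^\Downarrow=(q-q^{-1})(B-B^{-1}),\qquad \psi^\Downarrow K_i=0 \quad (0\le i\le d/2).
\end{equation*}
By uniqueness it then suffices to verify that $\psi$ itself meets both of these conditions. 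The second is immediate: it is the second defining property of $\psi$ from Lemma~\ref{lemma:psidef'}.

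For the first, I would use~(\ref{eq:Rdiff}) together with~(\ref{psiR}) to expand
\begin{equation*}
\psi R^\Downarrow-R^\Downarrow\psi=(q-q^{-1})(K-K^{-1})+a[\psi,K]+a^{-1}[\psi,K^{-1}]-a^{-1}[\psi,B]-a[\psi,B^{-1}].
\end{equation*}
From $\psi U_j\subseteq U_{j-1}$ and $\psi U_j^\Downarrow\subseteq U_{j-1}^\Downarrow$ (both stated in the introduction from \cite{bockting}), combined with the fact that $K$ acts as $q^{d-2j}I$ on $U_j$ and $B$ acts as $q^{d-2j}I$ on $U_j^\Downarrow$, I obtain $K\psi=q^2\psi K$ and $B\psi=q^2\psi B$. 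Consequently $[\psi,K]=(1-q^2)\psi K$, $[\psi,K^{-1}]=(1-q^{-2})\psi K^{-1}$, and similarly for $B$. Substituting these and using $1-q^2=-q(q-q^{-1})$, $1-q^{-2}=q^{-1}(q-q^{-1})$, the desired equality collapses to the operator identity
\begin{equation*}
\psi\bigl(aqK-a^{-1}q^{-1}K^{-1}-a^{-1}qB+aq^{-1}B^{-1}\bigr)=K-K^{-1}-B+B^{-1}.
\end{equation*}

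This last identity is where I expect the main work to lie. To prove it I would use Proposition~\ref{prop:VMK_i} to reduce to each $M$-submodule $MK_i$. Within $MK_i$, Lemma~\ref{lemma:refinement} and Corollary~\ref{cor:Mv1} (together with their $\Phi^\Downarrow$-analogues) provide two bases $\{\tau_{ij}(A)v\}_{j=i}^{d-i}$ and $\{\tau_{ij}^\Downarrow(A)v\}_{j=i}^{d-i}$, on which $K$ (resp.\ $B$) is diagonal with eigenvalue $q^{d-2j}$ on the respective summand, and on which $\psi$ acts explicitly by~(\ref{eq:psiclar}). The identity then reduces to a finite scalar computation, carried out by expanding $\tau_{ij}^\Downarrow$ in the $\tau_{ik}$-basis, applying~(\ref{eq:psiclar}), and matching coefficients $\tau_{i,k}(A)v$ on both sides; the resulting polynomial identity in $q,a,b$ can be dispatched by elementary manipulation. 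Combined with the uniqueness above, this yields $\psi^\Downarrow=\psi$.
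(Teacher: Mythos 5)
The paper itself does not prove this lemma; it is quoted from \cite{bockting} (Corollary 15.2), so your proposal is in effect an attempt to reprove that cited result. Your outline (apply the uniqueness in Lemma \ref{lemma:psidef'} to $\Phi^\Downarrow$, using $K^\Downarrow=B$, $K_i^\Downarrow=K_i$, and then check that $\psi$ itself satisfies the two defining conditions for $\Phi^\Downarrow$) is sensible, but the verification as written is circular. The step $B\psi B^{-1}=q^{2}\psi$, equivalently $\psi U^\Downarrow_{j}\subseteq U^\Downarrow_{j-1}$, is exactly the part of Lemma \ref{lemma:psiU} and Lemma \ref{lemma:KpsiKinv} that rests on $\psi^\Downarrow=\psi$: in \cite{bockting} the inclusion $\psi U^\Downarrow_{j}\subseteq U^\Downarrow_{j-1}$ is Corollary 15.3, a consequence of Corollary 15.2, and this paper deliberately places Lemma \ref{lemma:psiU} and Lemma \ref{lemma:KpsiKinv} after Lemma \ref{lemma:psiequal}. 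Without the lemma you only have $K\psi K^{-1}=q^{2}\psi$ and, via (\ref{eq:UUdd}), the weaker fact $\psi U^\Downarrow_{i}\subseteq U^\Downarrow_{0}+\cdots+U^\Downarrow_{i-1}$, which is not enough to collapse the commutators $[\psi,B]$, $[\psi,B^{-1}]$ as you do. Likewise, the operator identity you reduce to, $\psi\bigl(aqK-a^{-1}q^{-1}K^{-1}-a^{-1}qB+aq^{-1}B^{-1}\bigr)=K-K^{-1}-B+B^{-1}$, is precisely a consequence of Proposition \ref{prop:coincide}, which the paper obtains downstream of $\psi^\Downarrow=\psi$; so assuming it, or the ingredients behind it, begs the question.

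The second gap is that the one place you locate ``the main work'' is left undone. To verify your identity (or, better, to verify $\psi R^\Downarrow-R^\Downarrow\psi=(q-q^{-1})(B-B^{-1})$ directly) on each $MK_i$, you must know how $B$, equivalently the $\Phi^\Downarrow$-refinement basis $\{\tau^\Downarrow_{ij}(A)v\}$, is expressed in the basis $\{\tau_{ij}(A)v\}$ of $MK_i$; computing those transition coefficients and matching coefficients is exactly the nontrivial content of the relevant sections of \cite{bockting}, not an elementary manipulation one can defer (note also that $b$ plays no role in this computation, since everything happens inside the $M$-module $MK_i$ and involves only $q$ and $a$). So either carry out that computation explicitly, thereby reproving the cited result without invoking $\psi U^\Downarrow_j\subseteq U^\Downarrow_{j-1}$, or do what the paper does and simply cite \cite{bockting}; as it stands the argument uses consequences of the statement it sets out to prove.
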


\begin{lemma}{\rm \cite[Lemma 11.2]{bockting}}. \label{lemma:psiU}
With reference to Lemma \ref{lemma:psidef'},
\begin{equation}
\psi U_i\subseteq U_{i-1},\qquad\qquad \psi U_i^\Downarrow\subseteq U_{i-1}^\Downarrow
\end{equation}
for $1\leq i\leq d$ and both $\psi U_0=0$ and $\psi U_0^\Downarrow=0$.   
Moreover $\psi^{d+1}=0$.
\end{lemma}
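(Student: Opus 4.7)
The plan is to reduce everything to the refined decomposition in Corollary \ref{cor:Urefine} and then apply the explicit formula (\ref{eq:psiclar}) summand by summand.

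First I would prove the inclusion $\psi U_j \subseteq U_{j-1}$ for $1 \leq j \leq d$. By Lemma \ref{lemma:refinement}, every element of $U_j$ can be written as a sum of vectors of the form $\tau_{ij}(A)v$ with $v \in K_i$ and $0 \leq i \leq \min\{j, d-j\}$. Applying (\ref{eq:psiclar}) to such a summand yields
\begin{equation*}
\psi\,\tau_{ij}(A)v = (q^{j-i}-q^{i-j})(q^{d-i-j+1}-q^{i+j-d-1})\,\tau_{i,j-1}(A)v.
\end{equation*}
When $i < j$, the index $i$ still satisfies $i \leq \min\{j-1, d-j+1\}$ (since $i \leq d-j < d-j+1$), so $\tau_{i,j-1}(A)v$ is one of the basis vectors appearing in the refinement of $U_{j-1}$ from Lemma \ref{lemma:refinement}, hence lies in $U_{j-1}$. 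When $i = j$, the factor $q^{j-i} - q^{i-j}$ vanishes (equivalently $\tau_{j,j-1} = 0$ by convention), so the contribution is zero. Thus $\psi U_j \subseteq U_{j-1}$. For $j = 0$, Definition \ref{def:K_i} gives $U_0 = K_0$, and Lemma \ref{lemma:psidef'} then gives $\psi U_0 = \psi K_0 = 0$.

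For the parallel assertion involving $\{U_i^\Downarrow\}_{i=0}^d$, I would appeal to Lemma \ref{lemma:psiequal}: applying the second inversion to $\Phi$ carries $U_i$ to $U_i^\Downarrow$ while preserving the $K_i$ (again by Definition \ref{def:K_i}), and leaves $\psi$ unchanged since $\psi^\Downarrow = \psi$. Hence the containment $\psi U_i \subseteq U_{i-1}$ applied to $\Phi^\Downarrow$ directly gives $\psi U_i^\Downarrow \subseteq U_{i-1}^\Downarrow$, and similarly $\psi U_0^\Downarrow = 0$.

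Finally, nilpotency $\psi^{d+1} = 0$ follows by induction on the shift: since $V = \sum_{i=0}^d U_i$ and $\psi$ lowers the $U$-index by one, we get $\psi^{d+1} U_i \subseteq U_{i-d-1} = 0$ for every $i \in \{0, 1, \ldots, d\}$, whence $\psi^{d+1} = 0$ on $V$. The only step requiring any real care is the bookkeeping on the index $i$ in step one — specifically confirming that the boundary case $i = j$ is killed by the coefficient rather than producing a stray term outside $U_{j-1}$ — but this is handled cleanly by the vanishing factor $q^{j-i} - q^{i-j}$ at $i = j$.
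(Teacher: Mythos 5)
Your argument is correct, but it is worth noting that the paper itself offers no proof of this statement: it is quoted verbatim from \cite[Lemma 11.2]{bockting}, where the lowering property is established as part of the basic construction of $\Psi$. What you have done is re-derive the cited fact from other facts the present paper quotes, namely the refined decomposition $U_j=\sum_{i=0}^{\min\{j,d-j\}}\tau_{ij}(A)K_i$ of Lemma \ref{lemma:refinement}, the explicit action (\ref{eq:psiclar}), the condition $\psi K_i=0$ from Lemma \ref{lemma:psidef'}, and $\psi^\Downarrow=\psi$ from Lemma \ref{lemma:psiequal}. The index bookkeeping is right: for $i<j$ you correctly check $i\le\min\{j-1,d-j+1\}$ so that $\tau_{i,j-1}(A)K_i\subseteq U_{j-1}$, and the boundary case $i=j$ is indeed killed by the factor $q^{j-i}-q^{i-j}$ (equivalently by $\tau_{j,j-1}=0$); the case $j=0$ via $U_0=K_0$ and the nilpotency argument are also fine, and the passage to $\Phi^\Downarrow$ is legitimate since $\Phi^\Downarrow$ satisfies Assumption \ref{assump:main} (with $a$ replaced by $a^{-1}$), $K_i^\Downarrow=K_i$, and $\psi^\Downarrow=\psi$. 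The one caveat is logical order rather than correctness: in the source \cite{bockting}, Equation (58) is obtained downstream of Lemma 11.2, so as a from-scratch proof of that lemma your route would be circular; within the present paper, which quotes (\ref{eq:psiclar}) as an independent fact, your deduction is valid and has the merit of making the lowering property and the vanishing $\psi K_i=0$ visibly consistent with the explicit action of $\psi$ on the decomposition of Corollary \ref{cor:Urefine}.
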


\begin{lemma}\label{lemma:KpsiKinv}
With reference to Lemma \ref{lemma:psidef'},
\begin{equation}
K\psi K^{-1}=q^2\psi, \qquad\qquad B\psi B^{-1}=q^2\psi.\label{eq:KpsiKinv}
\end{equation}
\end{lemma}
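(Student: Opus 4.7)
The plan is to mirror the proof of Lemma \ref{lemma:KRKinv} almost verbatim, replacing the raising map $R$ with the lowering map $\psi$ and keeping track of the resulting sign change in the exponent. The key ingredients are already in hand: Lemma \ref{lemma:psiU} gives $\psi U_i \subseteq U_{i-1}$ and $\psi U_i^\Downarrow \subseteq U_{i-1}^\Downarrow$, while Definitions \ref{def:KB} and \ref{def:KB-B} describe how $K$ and $B$ act diagonally on these two decompositions with eigenvalue $q^{d-2i}$ on the $i$th summand.

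For the first equation, I would fix $i$ with $0 \leq i \leq d$ and take $v \in U_i$. Then $K^{-1}v = q^{2i-d}v$, and by Lemma \ref{lemma:psiU} the vector $\psi v$ lies in $U_{i-1}$ (the case $i=0$ is covered by $\psi U_0 = 0$, making the identity trivial there). Hence $K(\psi v) = q^{d-2(i-1)}\psi v = q^{d-2i+2}\psi v$, and multiplying through gives $K\psi K^{-1} v = q^{d-2i+2}\cdot q^{2i-d}\psi v = q^2 \psi v$. Since $V = \bigoplus_{i=0}^d U_i$ by Section \ref{section:U}, this verifies $K\psi K^{-1} = q^2 \psi$ on all of $V$.

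The second equation is obtained by the same argument with $U_i$ replaced by $U_i^\Downarrow$, using the second inclusion $\psi U_i^\Downarrow \subseteq U_{i-1}^\Downarrow$ from Lemma \ref{lemma:psiU} together with the defining eigenvalue property of $B$ from Definition \ref{def:KB-B}. Alternatively, one could invoke Lemma \ref{lemma:psiequal} ($\psi^\Downarrow = \psi$) and apply the $\Downarrow$ convention to the first equation, since $B = K^\Downarrow$.

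There is no real obstacle here: both equations reduce to eigenvalue bookkeeping on a decomposition that $\psi$ shifts by one. The only thing to watch is the endpoint $i=0$, which is harmless because $\psi$ annihilates $U_0$ (resp.\ $U_0^\Downarrow$), so the asserted identity holds on that summand trivially.
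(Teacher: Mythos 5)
Your proposal is correct and follows the same route as the paper, which proves the lemma by combining Lemma \ref{lemma:psiU} with the eigenvalue descriptions of $K$ and $B$ in Definitions \ref{def:KB} and \ref{def:KB-B}; you have simply written out the eigenvalue bookkeeping that the paper leaves implicit.
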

\begin{proof}
Use Lemma \ref{lemma:psiU} together with the definitions of $K$ and $B$. 
\end{proof}

\begin{lemma}\label{lemma:psiUkernel}
For $0\leq i\leq d/2$, $K_i$ is the kernel of $\psi$ acting on $U_i$.
\end{lemma}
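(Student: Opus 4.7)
The plan is to combine the refinement of the first split decomposition (Lemma \ref{lemma:refinement}) with the explicit action formula (\ref{eq:psiclar}) for $\psi$ on the summands $\tau_{\ell j}(A)K_\ell$.

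First I would observe that for $0\leq i\leq d/2$ we have $\min\{i,d-i\}=i$, so Lemma \ref{lemma:refinement} gives the direct sum
\begin{equation*}
U_i=\sum_{\ell=0}^{i}\tau_{\ell i}(A)K_\ell.
\end{equation*}
A general vector of $U_i$ is therefore uniquely of the form $u=\sum_{\ell=0}^{i}\tau_{\ell i}(A)v_\ell$ with $v_\ell\in K_\ell$. The containment $K_i\subseteq \ker(\psi|_{U_i})$ is immediate from Lemma \ref{lemma:psidef'}, so the task is to prove the reverse containment.

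Next I would apply formula (\ref{eq:psiclar}) to each summand. Since $\tau_{i,i-1}=0$ by convention, the $\ell=i$ piece vanishes under $\psi$, and the lower pieces give
\begin{equation*}
\psi u=\sum_{\ell=0}^{i-1}c_{\ell i}\,\tau_{\ell,i-1}(A)v_\ell,\qquad c_{\ell i}=(q^{i-\ell}-q^{\ell-i})(q^{d-\ell-i+1}-q^{\ell+i-d-1}).
\end{equation*}
The next step, which is the key point, is to show $c_{\ell i}\neq 0$ for $0\leq \ell<i\leq d/2$. The first factor is nonzero because $1\leq i-\ell\leq d$ forces $q^{2(i-\ell)}\neq 1$ by Lemma \ref{note:main}(ii). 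For the second factor, the constraints $0\leq \ell<i\leq d/2$ yield $\ell+i\geq 1$ and $\ell+i<d$, hence $1\leq d-\ell-i+1\leq d$, and Lemma \ref{note:main}(ii) again gives $q^{2(d-\ell-i+1)}\neq 1$.

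Finally I would invoke Lemma \ref{lemma:refinement} once more, this time for $U_{i-1}$: since $i-1<d/2$ as well, the sum $U_{i-1}=\sum_{\ell=0}^{i-1}\tau_{\ell,i-1}(A)K_\ell$ is direct. So $\psi u=0$ forces $c_{\ell i}v_\ell=0$ and hence $v_\ell=0$ for $0\leq \ell\leq i-1$, leaving $u=v_i\in K_i$. This gives $\ker(\psi|_{U_i})\subseteq K_i$, completing the proof. The only non-routine part is the case analysis showing the $c_{\ell i}$ do not vanish, and that is handled cleanly by the index bound $i\leq d/2$ together with Lemma \ref{note:main}(ii).
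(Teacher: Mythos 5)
Your proof is correct and takes essentially the same route as the paper, whose proof is precisely: use Lemma \ref{lemma:refinement}, formula (\ref{eq:psiclar}), and the fact that $q^{2j}\neq 1$ for $1\leq j\leq d$. One small imprecision at the end: $\psi u=0$ together with the directness of $U_{i-1}=\sum_{\ell=0}^{i-1}\tau_{\ell,i-1}(A)K_\ell$ yields $\tau_{\ell,i-1}(A)v_\ell=0$ rather than $v_\ell=0$ outright; to finish, either invoke Corollary \ref{cor:Mv1} to get $v_\ell=0$, or simply note $\tau_{\ell i}(A)=(A-\theta_{i-1}I)\,\tau_{\ell,i-1}(A)$, so the $\ell$th component $\tau_{\ell i}(A)v_\ell$ of $u$ vanishes for $\ell<i$ and hence $u=v_i\in K_i$.
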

\begin{proof}
Use Lemma \ref{lemma:refinement} along with (\ref{eq:psiclar}) and the fact that $q^{2j}\neq 1$ for $1\leq j\leq d$.
\end{proof}

\bigskip

In Lemma \ref{lemma:psidef'} we gave a characterization of $\psi$.  Shortly we will give a second characterization. That characterization will be based on the following result.

\begin{lemma}\label{lemma:X}
Given $X\in{\rm End}(V)$ such that $XR=RX$ and $XU_i\subseteq U_{i-1}$ for $0\leq i\leq d$.
Then $X=0$.
\end{lemma}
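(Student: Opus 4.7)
The plan is to first show that $X$ annihilates each $K_i$, then bootstrap this to $V$ using the decomposition $V = \sum_{i=0}^{\lfloor d/2 \rfloor} MK_i$ from Proposition \ref{prop:VMK_i} together with the commutation $XR = RX$.

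Fix $v \in K_i$ with $0 \leq i \leq d/2$. Since $R$ acts on $U_j$ as $A - \theta_j I$ (Lemma \ref{lemma:RonU}) and $R^j v \in U_{i+j}$, an easy induction yields $R^j v = \tau_{i, i+j}(A) v$ for all $j \geq 0$. In particular $R^{d-2i+1} v = \tau_{i, d-i+1}(A) v = 0$, since $\tau_{i, d-i+1}$ is the minimal polynomial of $A$ on $MK_i$ (Lemma \ref{lemma:MK}). Applying $XR^{d-2i+1} = R^{d-2i+1}X$ to $v$ gives $R^{d-2i+1}(Xv) = 0$, with $Xv \in U_{i-1}$ (taking $U_{-1} = 0$, so the case $i = 0$ is immediate).

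The main technical step is to show that $R^{d-2i+1}$ is injective on $U_{i-1}$ for $1 \leq i \leq d/2$. By Lemma \ref{lemma:refinement}, $U_{i-1} = \bigoplus_{k=0}^{i-1} \tau_{k, i-1}(A) K_k$. For $u_k \in K_k$, iterating the identity $R\tau_{k,j}(A) u_k = \tau_{k,j+1}(A) u_k$ (valid on $\tau_{k,j}(A) K_k \subseteq U_j$) gives
\[
R^{d-2i+1}\bigl(\tau_{k, i-1}(A) u_k\bigr) = \tau_{k, d-i}(A) u_k.
\]
Since $k \leq i - 1 < i \leq d - i$, the element $\tau_{k, d-i}(A) u_k$ is a basis vector of $MK_k$ (Lemma \ref{lemma:MK}), hence nonzero whenever $u_k \neq 0$. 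Moreover these outputs lie in distinct summands of $U_{d-i} = \bigoplus_{k=0}^{i} \tau_{k, d-i}(A) K_k$, so the map is injective on $U_{i-1}$. Consequently $Xv = 0$ and $XK_i = 0$.

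For the final step, fix $v \in K_i$. The identity $R^j v = \tau_{i, i+j}(A) v$, compared with the basis of $Mv$ from Corollary \ref{cor:Mv1}, shows $Mv = \K[R]\, v$. Since $X$ commutes with $R$ and $Xv = 0$, it follows that $X \cdot Mv = 0$. Hence $X$ annihilates $MK_i$ for each $i$, and invoking $V = \sum_{i=0}^{\lfloor d/2 \rfloor} MK_i$ yields $X = 0$. The principal obstacle is pinning down the injectivity of $R^{d-2i+1}$ on $U_{i-1}$, which requires the refined direct-sum structure of the split decomposition recorded in Lemmas \ref{lemma:refinement} and \ref{lemma:MK}.
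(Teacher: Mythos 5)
Your proof is correct and follows essentially the same route as the paper: reduce to showing $XK_i=0$ via the commutation $XR=RX$ and the decomposition into the $MK_i$, use $R^{d-2i+1}K_i=0$ together with $XK_i\subseteq U_{i-1}$, and conclude by injectivity of $R^{d-2i+1}$ on $U_{i-1}$. The only difference is presentational: where the paper cites Lemma \ref{lemma:Rkernel} for that injectivity, you verify it directly from the refinement in Lemma \ref{lemma:refinement} (and Corollary \ref{cor:Mv1}), which is a sound and slightly more self-contained way of justifying the same step.
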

\begin{proof}
By  (\ref{eq:tau}), Lemma \ref{lemma:RonU}, and Corollary \ref{cor:Urefine}, it suffices to show that $XR^hK_i=0$ for $0\leq i\leq d/2$ and $0\leq h\leq d-2i$.  
Since $XR=RX$, it suffices to show that $XK_i=0$ for $0\leq i\leq d/2$.  
Let $i$ be given.
First assume that $i=0$.
Then $XK_0=0$, since $K_0=U_0$ and $XU_0=0$.  
Next assume that $i\geq 1$.  
 By Lemma \ref{lemma:Rkernel} and $R^{d-2i}K_i=\tau_{i,d-i}(A)K_i$, we obtain $R^{d-2i+1}K_i=0$.
From this and since $XR=RX$, it follows that $R^{d-2i+1}XK_i=0$.  
By Definition \ref{def:K_i}, $K_i\subseteq U_i$ and hence $XK_i\subseteq U_{i-1}$.  
By Lemma \ref{lemma:Rkernel} the action of $R^{d-2i+1}$ on $U_{i-1}$ is injective.  
By these comments, $XK_i=0$.
We have now shown that $X=0$.
\end{proof}

\begin{lemma}
With reference to Lemma \ref{lemma:psidef'}, $\psi$ is the unique element of ${\rm End}(V)$ that satisfies both {\rm (\ref{psiR})}
and
$\psi U_i\subseteq U_{i-1}$ for $0\leq i\leq d$.
\end{lemma}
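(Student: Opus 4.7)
The plan is to verify existence directly from earlier results and then reduce uniqueness to Lemma \ref{lemma:X}.

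For existence, note that Lemma \ref{lemma:psidef'} gives us that $\psi$ satisfies (\ref{psiR}), and Lemma \ref{lemma:psiU} gives us that $\psi U_i \subseteq U_{i-1}$ for $0 \leq i \leq d$ (with the convention $U_{-1} = 0$). So $\psi$ itself is one such element; the content of the statement is the uniqueness.

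For uniqueness, I would proceed as follows. Suppose $\psi' \in \text{End}(V)$ satisfies both $\psi' R - R\psi' = (q-q^{-1})(K - K^{-1})$ and $\psi' U_i \subseteq U_{i-1}$ for $0 \leq i \leq d$. Set $X = \psi - \psi'$. Subtracting the two commutator equations, we obtain $XR - RX = 0$, i.e., $XR = RX$. Furthermore, since both $\psi$ and $\psi'$ send $U_i$ into $U_{i-1}$, so does their difference: $XU_i \subseteq U_{i-1}$ for $0 \leq i \leq d$. These are precisely the hypotheses of Lemma \ref{lemma:X}, which yields $X = 0$, hence $\psi = \psi'$.

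There is essentially no obstacle here; the work has been done in advance by Lemma \ref{lemma:X}, which was tailored for exactly this application. The only thing worth double-checking is the convention that $U_{-1} = 0$ on both sides of the comparison (so that the condition $\psi U_0 \subseteq U_{-1}$ reduces to $\psi U_0 = 0$, matching what Lemma \ref{lemma:psiU} gives for $\psi$).
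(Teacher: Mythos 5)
Your proposal is correct and follows the paper's own argument exactly: existence comes from Lemma \ref{lemma:psidef'} together with Lemma \ref{lemma:psiU}, and uniqueness comes from applying Lemma \ref{lemma:X} to the difference $\psi-\psi'$, which commutes with $R$ and lowers the split decomposition. Nothing further is needed.
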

\begin{proof}
By Lemma \ref{lemma:psidef'} and Lemma \ref{lemma:psiU}, $\psi$ satisfies these conditions.  
We now show the uniqueness assertion.
Assume $\psi'\in {\rm End}(V)$ satisfies the conditions in the statement of the lemma.
Observe that $(\psi-\psi')R=R(\psi-\psi')$ and $(\psi-\psi')U_i\subseteq U_{i-1}$ for $0\leq i\leq d$.
The result follows from these comments along with Lemma \ref{lemma:X}.
\end{proof}

\section{The algebra $U_q(\mathfrak{sl}_2)$}\label{section:uqsl2}

In this section we recall the quantum universal enveloping algebra $U_q(\mathfrak{sl}_2)$.  See \cite{jantzen}, \cite{kassel} for background information.

\begin{definition}\label{def:uqsl2}{\rm
Let $U_q(\mathfrak{sl}_2)$ denote the $\K$-algebra with generators
$e, f, k, k^{-1}$ and relations
\begin{align}
kk^{-1}&=k^{-1}k=1,\nonumber\\
kek^{-1}=q^2e,&\qquad\qquad
kfk^{-1} =q^{-2}f,\label{eq:def-fk}\\
ef-fe&=\frac{k-k^{-1}}{q-q^{-1}}.\label{eq:def-ef}
\end{align}
We refer to $e,f,k^{\pm 1}$ as the {\it Chevalley generators} for $U_q(\mathfrak{sl}_2)$.
}\end{definition}

Following \cite[p. 21]{jantzen}, we 
define the normalized Casimir element $\Lambda$ for $U_q(\mathfrak{sl}_2)$ by
\begin{eqnarray}
\Lambda&=(q-q^{-1})^{2} ef+q^{-1}k+qk^{-1},\label{eq:casimirdef1}\\
&=(q-q^{-1})^{2} fe+qk+q^{-1}k^{-1}.\label{eq:casimirdef2}
\end{eqnarray}
By \cite[Lemma 2.7]{jantzen}, $\Lambda$ is central in $U_q(\mathfrak{sl}_2)$.

\begin{lemma}\label{lemma:f^2e}
With reference to Definition \ref{def:uqsl2}, 
both
\begin{align}
f^2e-(q^2+q^{-2})fef+ef^2 &=-\Lambda f,\label{eq:f^2e}\\
e^2 f-(q^2+q^{-2})efe + fe^2 &=-\Lambda e.\label{eq:e^2f}
\end{align}
\end{lemma}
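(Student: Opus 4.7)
Both identities are purely algebraic consequences of the defining relations of $U_q(\mathfrak{sl}_2)$ together with the two expressions (\ref{eq:casimirdef1}), (\ref{eq:casimirdef2}) for $\Lambda$. The plan is to establish the first identity by a direct computation, and then obtain the second identity by a parallel argument (or, equivalently, by invoking the anti-involution $e\leftrightarrow f$, $k\leftrightarrow k^{-1}$).

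For (\ref{eq:f^2e}), I first move one copy of $e$ past one copy of $f$ in each of the outer terms, using (\ref{eq:def-ef}) in the form $fe=ef-(k-k^{-1})/(q-q^{-1})$ and $ef=fe+(k-k^{-1})/(q-q^{-1})$. This yields
\begin{equation*}
f^{2}e+ef^{2}=2fef+\frac{(k-k^{-1})f-f(k-k^{-1})}{q-q^{-1}}.
\end{equation*}
Next I use (\ref{eq:def-fk}), in the forms $kf=q^{-2}fk$ and $k^{-1}f=q^{2}fk^{-1}$, to compute
\begin{equation*}
(k-k^{-1})f-f(k-k^{-1})=(q^{-2}-1)fk+(1-q^{2})fk^{-1}=-(q-q^{-1})\bigl(q^{-1}fk+qfk^{-1}\bigr).
\end{equation*}
After dividing by $q-q^{-1}$ and moving $k^{\pm 1}$ back to the left via the same relations, the correction term becomes $-(qk+q^{-1}k^{-1})f$. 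Subtracting $(q^{2}+q^{-2})fef$ and using the identity $2-q^{2}-q^{-2}=-(q-q^{-1})^{2}$, one finds
\begin{equation*}
f^{2}e-(q^{2}+q^{-2})fef+ef^{2}=-(q-q^{-1})^{2}fef-(qk+q^{-1}k^{-1})f.
\end{equation*}
The right-hand side is precisely $-\Lambda f$ by formula (\ref{eq:casimirdef2}), proving (\ref{eq:f^2e}).

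For (\ref{eq:e^2f}) I run the same computation with the roles of $e$ and $f$ interchanged, using now $ek=q^{-2}ke$ and $ek^{-1}=q^{2}k^{-1}e$ to rewrite $e(k-k^{-1})-(k-k^{-1})e$. The calculation ends with
\begin{equation*}
e^{2}f-(q^{2}+q^{-2})efe+fe^{2}=-(q-q^{-1})^{2}efe-(q^{-1}k+qk^{-1})e,
\end{equation*}
which equals $-\Lambda e$ by formula (\ref{eq:casimirdef1}). The only subtlety is choosing the correct expression for $\Lambda$ in each case: (\ref{eq:casimirdef2}) is natural for multiplying on the right by $f$, while (\ref{eq:casimirdef1}) is natural for multiplying on the right by $e$. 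Beyond that the proof is bookkeeping, so the main ``obstacle'' is simply matching the coefficients $2-q^{2}-q^{-2}=-(q-q^{-1})^{2}$ on one side with the $k,k^{-1}$ coefficients $q,q^{-1}$ (or $q^{-1},q$) on the other.
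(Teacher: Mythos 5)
Your computation is correct and follows essentially the same route as the paper: rewrite the outer terms using $ef-fe=\frac{k-k^{-1}}{q-q^{-1}}$, push $k^{\pm 1}$ past $f$ (resp.\ $e$) with (\ref{eq:def-fk}), and recognize the result as $-\Lambda f$ via (\ref{eq:casimirdef2}) and as $-\Lambda e$ via (\ref{eq:casimirdef1}). The only nitpick is your parenthetical aside: the symmetry $e\leftrightarrow f$, $k\leftrightarrow k^{-1}$ is an (involutive) algebra automorphism fixing $\Lambda$, not an anti-involution, though this does not affect the argument you actually carry out.
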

\begin{proof}
We first prove (\ref{eq:f^2e}).  
The left-hand side of (\ref{eq:f^2e}) is equal to
\begin{equation}
-f(ef-fe)-(q-q^{-1})^2fef +(ef-fe)f.\label{eq:f^2e-2}
\end{equation}
By (\ref{eq:def-ef}), the element (\ref{eq:f^2e-2}) is equal to
\begin{equation*}
-f\frac{k-k^{-1}}{q-q^{-1}}-(q-q^{-1})^{2}fef +\frac{k-k^{-1}}{q-q^{-1}}f.
\end{equation*}
Line (\ref{eq:f^2e}) follows from this along with (\ref{eq:def-fk}) and (\ref{eq:casimirdef2}).

The proof is similar for (\ref{eq:e^2f}).
\end{proof}

We now discuss the finite-dimensional modules for $U_q(\mathfrak{sl}_2)$.  Recall the natural numbers $\N = \{0,1,2,\mathellipsis\}$ and the integers $\Z = \{0,\pm 1,\pm 2,\mathellipsis\}$.  For $n\in\Z$ define
\begin{equation*}
[n]_q=\frac{q^n-q^{-n}}{q-q^{-1}}.
\end{equation*} 
For $n\in\N$ define
\begin{equation*}
[n]_q^!=[n]_q [n-1]_q\cdots [1]_q,
\end{equation*}
where we interpret $[0]_q^!=1$.

\begin{lemma}{\rm \cite[Theorem 2.6]{jantzen}}.\label{lemma:?}  
 For $n\in\N$ and $\varepsilon\in\{1,-1\}$, there exists a $U_q(\mathfrak{sl}_2)$-module $L(n,\varepsilon)$ with the following properties.  $L(n,\varepsilon)$ has a basis $\{v_i\}_{i=0}^n$ such that
\begin{align}
ev_i & =\varepsilon [n+1-i]_qv_{i-1}    &(1\leq i\leq n),\qquad\qquad &ev_0=0,\\
fv_i & =[i+1]_qv_{i+1}    &(0\leq i\leq n-1),\qquad\qquad &fv_n=0,\\
kv_i&=\varepsilon q^{n-2i}v_i    &(0\leq i\leq n).\qquad\qquad &
\end{align}
The $U_q(\mathfrak{sl}_2)$-module $L(n,\varepsilon)$ is irreducible provided that $q^{2i}\neq 1$ for $1\leq i\leq n$.  
\end{lemma}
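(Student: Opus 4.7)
The plan is to construct $L(n,\varepsilon)$ directly as an $(n+1)$-dimensional vector space with basis $\{v_i\}_{i=0}^n$, define linear operators $E,F,K,K^{-1}$ by the formulas prescribed in the statement, and then verify the defining relations of $U_q(\mathfrak{sl}_2)$ case by case. This reduces an existence claim to a routine set of identities involving $q$-integers. Irreducibility will then be handled separately via a weight-space argument.

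First I would set $K v_i = \varepsilon q^{n-2i}v_i$ and $K^{-1} v_i = \varepsilon q^{2i-n} v_i$ (using $\varepsilon^{-1}=\varepsilon$); the equation $KK^{-1}=K^{-1}K=I$ is immediate. The relations $KEK^{-1} = q^2 E$ and $KFK^{-1} = q^{-2} F$ from (\ref{eq:def-fk}) follow by a one-line check on each $v_i$, since $E$ shifts the index down by one and $F$ shifts up by one, so the eigenvalue of $K$ changes by a factor of $q^{\pm 2}$. The nontrivial verification is (\ref{eq:def-ef}): on $v_i$ one computes
\begin{equation*}
(EF-FE)v_i \;=\; \varepsilon\bigl([i+1]_q[n-i]_q-[n+1-i]_q[i]_q\bigr)v_i,
\end{equation*}
while the right-hand side of (\ref{eq:def-ef}) acts as $\varepsilon(q^{n-2i}-q^{2i-n})/(q-q^{-1})\,v_i$. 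One then expands the bracket on the left in terms of $(q-q^{-1})^{-1}(q^m-q^{-m})$ and observes that the cross-terms cancel, leaving exactly the needed expression; equivalently, this is the standard identity $[i+1]_q[n-i]_q-[n+1-i]_q[i]_q=[n-2i]_q$.

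For irreducibility, assume $q^{2i}\neq 1$ for $1\leq i\leq n$, so that $[i]_q\neq 0$ for all $1\leq i\leq n$ and the eigenvalues $\{\varepsilon q^{n-2i}\}_{i=0}^n$ of $K$ on $L(n,\varepsilon)$ are pairwise distinct. Let $W\subseteq L(n,\varepsilon)$ be a nonzero submodule. Since $W$ is $K$-stable and $K$ acts on $L(n,\varepsilon)$ with distinct eigenvalues, $W$ is spanned by a subset of $\{v_i\}_{i=0}^n$, so it contains some $v_j$. Repeated application of $F$ then produces nonzero scalar multiples of $v_{j+1},\dots,v_n$ (the scalars being products of $[j+1]_q,[j+2]_q,\dots,[n]_q$, all nonzero), and repeated application of $E$ similarly produces $v_{j-1},\dots,v_0$. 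Hence $W=L(n,\varepsilon)$.

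The only genuinely delicate step is the $q$-integer identity underlying the $[E,F]$ computation; everything else is bookkeeping. I would isolate that identity as a short lemma (or cite the standard reference \cite[Theorem 2.6]{jantzen} as the statement already does), so that the body of the proof consists of three short verifications followed by the weight-space argument for irreducibility.
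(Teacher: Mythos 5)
Your proposal is correct, and it is essentially the standard argument: the paper itself gives no proof of this lemma (it simply cites Jantzen, Theorem 2.6), and your direct verification — defining the operators by the stated formulas, checking the relations of Definition \ref{def:uqsl2} via the identity $[i+1]_q[n-i]_q-[n+1-i]_q[i]_q=[n-2i]_q$, and proving irreducibility by noting that the $K$-eigenvalues $\varepsilon q^{n-2i}$ are distinct so any nonzero submodule contains some $v_j$ and then climbing up and down with $F$ and $E$ using $[i]_q\neq 0$ — is exactly how the cited result is established. The only bookkeeping worth making explicit is the boundary behavior (at $i=0$ and $i=n$ the vanishing of $[0]_q$ and $[n-i]_q$ keeps the $[E,F]$ check consistent with $Ev_0=0$ and $Fv_n=0$), which your computation already accommodates.
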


With reference to Lemma \ref{lemma:?} we refer to $\varepsilon$ as the {\it type} of $L(n,\varepsilon)$.

\begin{lemma}{\rm \cite[Lemma 2.7]{jantzen}}.\label{lemma:cas}
With reference to Lemma \ref{lemma:?}, for $n\in \N$ and $\varepsilon\in\{1,-1\}$, $\Lambda$ acts  on $L(n,\varepsilon)$ as $\varepsilon(q^{n+1}+q^{-n-1})$ times the identity.
\end{lemma}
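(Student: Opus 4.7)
The plan is to compute the action of $\Lambda$ on each basis vector $v_i$ of $L(n,\varepsilon)$ directly, using the first formulation $\Lambda = (q-q^{-1})^2 ef + q^{-1}k + qk^{-1}$ from (\ref{eq:casimirdef1}) together with the explicit module action given in Lemma \ref{lemma:?}.

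First I would evaluate each of the three summands on $v_i$. For the $ef$ term, using $fv_i = [i+1]_q v_{i+1}$ followed by $ev_{i+1} = \varepsilon [n-i]_q v_i$ (with both formulas interpreted as $0$ at the boundary $i=n$), and clearing the $(q-q^{-1})^2$ against the denominators in the two $q$-integers, I obtain
\[
(q-q^{-1})^2 ef\, v_i = \varepsilon (q^{i+1}-q^{-i-1})(q^{n-i}-q^{-n+i})\, v_i.
\]
For the remaining contribution, $kv_i = \varepsilon q^{n-2i} v_i$ yields
\[
(q^{-1}k + qk^{-1}) v_i = \varepsilon (q^{n-2i-1}+q^{-n+2i+1})\, v_i.
\]

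Next I would sum these and simplify. Expanding the product gives
\[
(q^{i+1}-q^{-i-1})(q^{n-i}-q^{-n+i}) = q^{n+1} - q^{-n+2i+1} - q^{n-2i-1} + q^{-n-1},
\]
so the two middle terms $-q^{-n+2i+1}-q^{n-2i-1}$ cancel exactly against the $k^{\pm 1}$ contributions, leaving $\Lambda v_i = \varepsilon(q^{n+1}+q^{-n-1}) v_i$, as asserted.

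The only delicate point is the boundary case $i=n$, where $fv_n = 0$. Here the $ef$ term vanishes outright, but this is consistent with the general calculation since the factor $q^{n-i}-q^{-n+i}$ already vanishes at $i=n$; the same cancellation argument then yields $\Lambda v_n = \varepsilon(q^{n+1}+q^{-n-1}) v_n$ from the $k^{\pm 1}$ summands alone. There is no real obstacle; the lemma reduces to a routine verification. An alternative route, cleaner but less robust, would be to invoke the centrality of $\Lambda$ noted after (\ref{eq:casimirdef2}), compute $\Lambda v_0$ only, and propagate to all $v_i$ via $\Lambda f = f\Lambda$; I prefer the per-vector computation because it avoids any reliance on the irreducibility hypothesis attached to Lemma \ref{lemma:?}.
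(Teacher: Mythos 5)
Your computation is correct: $ef\,v_i=\varepsilon[i+1]_q[n-i]_q v_i$, the factor $(q-q^{-1})^2$ clears the denominators to give $(q^{i+1}-q^{-i-1})(q^{n-i}-q^{i-n})$, and the cross terms $-q^{2i+1-n}-q^{n-2i-1}$ cancel against $q^{-1}k+qk^{-1}$ acting as $\varepsilon(q^{n-2i-1}+q^{2i+1-n})$, leaving $\varepsilon(q^{n+1}+q^{-n-1})$ uniformly in $i$, with the boundary case $i=n$ handled consistently as you note. The paper supplies no proof of its own here --- the statement is imported by citation to Jantzen's Lemma 2.7 --- so there is no in-paper argument to compare against; your per-vector verification is a complete and self-contained substitute, and your remark that it avoids the irreducibility proviso (which the centrality-plus-propagation route would implicitly need when some $[i]_q$ vanishes) is apt, since Lemma \ref{lemma:cas} is stated without that hypothesis.
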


If $q$ is not a root of unity, then the $L(n,\varepsilon)$ ($n\in\N, \varepsilon\in\{ 1,-1\}$) are the only finite-dimensional irreducible $U_q(\mathfrak{sl}_2)$-modules.  If $q$ is a root of unity, there are other types of finite-dimensional irreducible $U_q(\mathfrak{sl}_2)$-modules.  See \cite[Chapter 2]{jantzen} for a complete classification.  In our application, we will only be concerned with the finite-dimensional irreducible $U_q(\mathfrak{sl}_2)$-modules of type $L(n,\varepsilon)$.  \\

We now consider finite-dimensional $U_q(\mathfrak{sl}_2)$-modules which are not necessarily irreducible.

\begin{definition}{\rm
Let $V$ denote a finite-dimensional $U_q(\mathfrak{sl}_2)$-module.  We say that $V$ is {\it semisimple} whenever it is 
a direct sum of 
irreducible $U_q(\mathfrak{sl}_2)$-modules.}
\end{definition}

\begin{definition}{\rm \cite[Section 2.2]{jantzen}}.\label{def:wtspace}
{\rm 
Let $V$ denote a finite-dimensional $U_q(\mathfrak{sl}_2)$-module.  For $\lambda\in \K$, let $V_\lambda=\{v\in V | kv=\lambda v\}$.  
We call $\lambda$ a {\it weight} of $V$ whenever $V_\lambda\neq 0$.  
In this case we call $V_\lambda$ the {\it weight space of}  $V$ {\it associated with} $\lambda$.
}\end{definition}

Referring to Lemma \ref{lemma:?}, assume $q^{2i}\neq 1$ for $1\leq i\leq n$.
Observe that the weights of $L(n,\varepsilon)$ are $\varepsilon q^n,\varepsilon q^{n-2},\mathellipsis, \varepsilon q^{-n}$.  
We note that 
for $0\leq i\leq n$, $v_i$ is a basis for the weight space of $L(n,\varepsilon)$ associated with the weight $\varepsilon q^{n-2i}$.

\begin{definition} 
 {\rm  
 Let $V$ denote a finite-dimensional $U_q(\mathfrak{sl}_2)$-module.  Let $\lambda$ denote a weight of $V$.  By the {\it highest weight space of $V$ associated with} $\lambda$, 
 we mean the kernel of the action of $e$ on $V_{\lambda}$.  
We refer to $\lambda$ as a {\it highest weight} of $V$ whenever the corresponding highest weight space is nonzero.  
}\end{definition}

Referring to Lemma \ref{lemma:?}, assume $q^{2i}\neq 1$ for $1\leq i\leq n$.  
We note that $\varepsilon q^n$ is the unique highest weight of $L(n,\varepsilon)$.  For $L(n,\varepsilon)$, the highest weight space associated with the weight $\varepsilon q^n$ is equal to the weight space associated with $\varepsilon q^n$.

\begin{definition}\label{def:homcomp}
{\rm 
Let $V$ denote a finite-dimensional $U_q(\mathfrak{sl}_2)$-module.  For $n\in\N$ and $\varepsilon\in\{1,-1\}$, 
consider the subspace of $V$ spanned by the $U_q(\mathfrak{sl}_2)$-submodules of $V$ which are isomorphic to $L(n,\varepsilon)$.  We call this subspace the {\it homogeneous component of} $V$ {\it associated with} $L(n, \varepsilon)$.
}\end{definition}

\section{A $U_q(\mathfrak{sl}_2)$-module structure on $V$ associated with $\Phi$}\label{section:Umod}

We now return to the situation of Assumption \ref{assump:main}.   
Recall from Lemma \ref{lemma:psidef'} the equation
\begin{equation}
\psi R-R\psi=(q-q^{-1})(K-K^{-1}).\label{eq:psiR-secUmod}
\end{equation}
Recall from Lemma \ref{lemma:KRKinv} and Lemma \ref{lemma:KpsiKinv} that 
\begin{equation}
KRK^{-1}=q^{-2}R,\qquad\qquad K\psi K^{-1}=q^2\psi. \label{eq:KR,Kpsi}
\end{equation}
These relations are reminiscent of the defining relations for $U_q(\mathfrak{sl}_2)$.  
In this section we use the above relations to obtain a $U_q(\mathfrak{sl}_2)$-module structure on $V$.  
Then we will discuss this $U_q(\mathfrak{sl}_2)$-module structure from various points of view.

\begin{lemma}\label{lemma:Uaction}
With reference to Definition \ref{def:uqsl2}, there exists a $U_q(\mathfrak{sl}_2)$-module structure on $V$ for which the Chevalley generators act as follows:
\begin{center}
\begin{tabular}{c|cccc}
{\rm element of $U_q(\mathfrak{sl}_2)$} &  $e$ & $f$ &$k$ & $k^{-1}$
\\
\hline
{\rm action on $V$} & $(q-q^{-1})^{-1}\psi$ \ \ & $(q-q^{-1})^{-1}R$ \ \ &$K$ \ & $K^{-1}$
\end{tabular}
\end{center}
\end{lemma}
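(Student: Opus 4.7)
The plan is to verify directly that the operators on $V$ given by the table satisfy the defining relations of $U_q(\mathfrak{sl}_2)$ listed in Definition \ref{def:uqsl2}. Since $U_q(\mathfrak{sl}_2)$ is presented by the Chevalley generators $e,f,k,k^{\pm 1}$ subject to the relations $kk^{-1}=k^{-1}k=1$, $kek^{-1}=q^2e$, $kfk^{-1}=q^{-2}f$, and $ef-fe=(k-k^{-1})/(q-q^{-1})$, it suffices to check that the corresponding identities hold among the operators $(q-q^{-1})^{-1}\psi$, $(q-q^{-1})^{-1}R$, $K$, $K^{-1}$ acting on $V$. All three of the needed ingredients have already been proved in earlier sections.

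First I would note that $K$ is invertible on $V$ by construction (Definition \ref{def:KB}), so $KK^{-1}=K^{-1}K=I$ is immediate. Next, the relation $kek^{-1}=q^2e$ becomes $K\psi K^{-1}=q^2\psi$, which is exactly the left equation in Lemma \ref{lemma:KpsiKinv}. Similarly, $kfk^{-1}=q^{-2}f$ becomes $KRK^{-1}=q^{-2}R$, which is the left equation in Lemma \ref{lemma:KRKinv}. These two relations handle the $k$-commutation with $e$ and $f$.

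For the remaining relation $ef-fe=(k-k^{-1})/(q-q^{-1})$, I would substitute in and clear the scalar $(q-q^{-1})^{-2}$ in front of the commutator to reduce the required identity to
\begin{equation*}
\psi R - R\psi = (q-q^{-1})(K-K^{-1}),
\end{equation*}
which is precisely equation (\ref{psiR}) of Lemma \ref{lemma:Rpsi}. Having verified all the defining relations, we conclude by the universal property of $U_q(\mathfrak{sl}_2)$ that the assignment in the table extends uniquely to an algebra homomorphism $U_q(\mathfrak{sl}_2)\to\mathrm{End}(V)$, giving $V$ the claimed $U_q(\mathfrak{sl}_2)$-module structure.

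Since every step reduces to a lemma already proved, I do not anticipate any real obstacle; the only thing to be careful about is the bookkeeping of the scalar factors $(q-q^{-1})^{-1}$ in passing from $e,f$ to $\psi,R$, which is why Lemma \ref{lemma:Rpsi} is stated with the factor $(q-q^{-1})$ on the right-hand side, making the normalization match.
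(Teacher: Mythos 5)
Your proposal is correct and follows essentially the same route as the paper: the paper's proof likewise verifies the defining relations of $U_q(\mathfrak{sl}_2)$ directly from $\psi R-R\psi=(q-q^{-1})(K-K^{-1})$, $KRK^{-1}=q^{-2}R$, and $K\psi K^{-1}=q^{2}\psi$, together with the invertibility of $K$. Your bookkeeping of the scalars $(q-q^{-1})^{-1}$ is also exactly as intended.
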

\begin{proof}
Use (\ref{eq:psiR-secUmod}), (\ref{eq:KR,Kpsi}), and Definition \ref{def:uqsl2}.
\end{proof}
\bigskip

Recall the Casimir element $\Lambda$ of $U_q(\mathfrak{sl}_2)$ from (\ref{eq:casimirdef1}), (\ref{eq:casimirdef2}).  

\begin{lemma}\label{lemma:casimir-act1} 
The action of $\Lambda$ on $V$ is equal to both
\begin{eqnarray}
&\psi R+q^{-1}K+qK^{-1},\label{eq:casimir-act1-1}\\
&R\psi+qK+q^{-1}K^{-1}.\label{eq:casimir-act1-2}
\end{eqnarray}
\end{lemma}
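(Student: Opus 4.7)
The plan is to obtain both expressions by direct substitution into the two forms of the Casimir element given in (\ref{eq:casimirdef1}) and (\ref{eq:casimirdef2}), using the action table from Lemma \ref{lemma:Uaction}.

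First I would recall that under the $U_q(\mathfrak{sl}_2)$-module structure of Lemma \ref{lemma:Uaction}, the Chevalley generators act as $e = (q-q^{-1})^{-1}\psi$, $f = (q-q^{-1})^{-1}R$, $k = K$, $k^{-1} = K^{-1}$. Therefore
\[
(q-q^{-1})^2 ef \text{ acts as } (q-q^{-1})^2 \cdot (q-q^{-1})^{-1}\psi \cdot (q-q^{-1})^{-1}R = \psi R,
\]
and similarly $(q-q^{-1})^2 fe$ acts on $V$ as $R\psi$.

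Next I would substitute these into the two expressions for $\Lambda$. From (\ref{eq:casimirdef1}), $\Lambda = (q-q^{-1})^2 ef + q^{-1}k + qk^{-1}$ acts on $V$ as $\psi R + q^{-1}K + qK^{-1}$, which is (\ref{eq:casimir-act1-1}). From (\ref{eq:casimirdef2}), $\Lambda = (q-q^{-1})^2 fe + qk + q^{-1}k^{-1}$ acts on $V$ as $R\psi + qK + q^{-1}K^{-1}$, which is (\ref{eq:casimir-act1-2}).

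Since $\Lambda$ is a single element of $U_q(\mathfrak{sl}_2)$ that happens to admit the two presentations (\ref{eq:casimirdef1}) and (\ref{eq:casimirdef2}), its action on $V$ is unambiguous, and both (\ref{eq:casimir-act1-1}) and (\ref{eq:casimir-act1-2}) therefore give the same operator. There is no real obstacle in this proof; it is a direct unpacking of Lemma \ref{lemma:Uaction} together with the two equivalent definitions of $\Lambda$. One could optionally verify the equality of (\ref{eq:casimir-act1-1}) and (\ref{eq:casimir-act1-2}) directly by using the commutator relation $\psi R - R\psi = (q-q^{-1})(K - K^{-1})$ from Lemma \ref{lemma:psidef'}, which serves as a consistency check but is not logically needed.
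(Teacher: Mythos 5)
Your proposal is correct and follows the same route as the paper, which proves the lemma by citing exactly (\ref{eq:casimirdef1}), (\ref{eq:casimirdef2}), and Lemma \ref{lemma:Uaction}; your substitution of the generator actions into the two presentations of $\Lambda$ is just this argument written out in full. The optional consistency check via $\psi R - R\psi = (q-q^{-1})(K-K^{-1})$ is fine but, as you note, not needed.
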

\begin{proof}
Use (\ref{eq:casimirdef1}), (\ref{eq:casimirdef2}), and Lemma \ref{lemma:Uaction}.
\end{proof}

\begin{lemma}
The action of $\Lambda$ on $V$ commutes with each of
\begin{equation*}
 \psi,\qquad\qquad R, \qquad\qquad K,\qquad\qquad A.\end{equation*}
\end{lemma}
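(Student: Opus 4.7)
The plan is to deduce everything from the centrality of $\Lambda$ in $U_q(\mathfrak{sl}_2)$, which was noted just after equations (\ref{eq:casimirdef1})--(\ref{eq:casimirdef2}) following \cite[Lemma 2.7]{jantzen}. Under the $U_q(\mathfrak{sl}_2)$-module structure on $V$ provided by Lemma \ref{lemma:Uaction}, the Chevalley generators $e,f,k,k^{-1}$ act as nonzero scalar multiples of $\psi, R, K, K^{-1}$ respectively. Since $\Lambda$ is a polynomial in $e, f, k, k^{\pm 1}$ that lies in the center of $U_q(\mathfrak{sl}_2)$, its action on $V$ commutes with the actions of $e, f, k, k^{-1}$, and hence with $\psi$, $R$, $K$, and $K^{-1}$.

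It only remains to treat $A$. The idea is to express $A$ in terms of operators we have already handled: by the definition (\ref{eq:RAK}) of $R$, namely $R = A - aK - a^{-1}K^{-1}$, we may solve for $A$ to obtain
\begin{equation*}
A = R + aK + a^{-1}K^{-1}.
\end{equation*}
Since the action of $\Lambda$ on $V$ commutes with each of $R$, $K$, $K^{-1}$ (by the previous paragraph), it commutes with their linear combination $A$.

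There is no real obstacle here; the only thing to be careful about is that Lemma \ref{lemma:Uaction} genuinely makes $\Lambda$ act on $V$ as an element of the image of $U_q(\mathfrak{sl}_2)$ in $\mathrm{End}(V)$, so centrality transfers directly to commutation on $V$. One could alternatively verify commutation with $\psi$, $R$, $K$ by a short direct computation using the explicit forms (\ref{eq:casimir-act1-1}), (\ref{eq:casimir-act1-2}) of the action of $\Lambda$, together with (\ref{eq:psiR-secUmod}) and (\ref{eq:KR,Kpsi}); for instance, commutation with $K$ follows immediately because $K$ commutes with $K^{\pm 1}$ and (by (\ref{eq:KR,Kpsi})) $K\psi R K^{-1} = (q^2\psi)(q^{-2}R) = \psi R$. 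But the conceptual route through centrality of $\Lambda$ is cleanest.
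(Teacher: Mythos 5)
Your proposal is correct and follows essentially the same route as the paper: commutation with $\psi$, $R$, $K$ comes from the centrality of $\Lambda$ together with the module structure of Lemma \ref{lemma:Uaction}, and commutation with $A$ follows from (\ref{eq:RAK}) by writing $A=R+aK+a^{-1}K^{-1}$.
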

\begin{proof}
Since $\Lambda$ is central in $U_q(\mathfrak{sl}_2)$, $\Lambda$ commutes with each of $e,f,k$.  
So the action of $\Lambda$ on $V$ commutes with each of $\psi, R, K$  in view of Lemma \ref{lemma:Uaction}.  The action of $\Lambda$ on $V$ commutes with $A$ by (\ref{eq:RAK}). 
\end{proof}

\begin{lemma}\label{lemma:R^2psi} 
The following equations hold on $V$:
\begin{eqnarray}
R^2\psi-(q^2+q^{-2})R\psi R+\psi R^2&=-(q-q^{-1})^2\Lambda R,\label{eq:R^2psi}\\
\psi^2 R-(q^2+q^{-2})\psi R\psi + R\psi^2&=-(q-q^{-1})^2\Lambda\psi.\label{eq:psi^2R}
\end{eqnarray}
\end{lemma}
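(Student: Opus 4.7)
The plan is to obtain both identities by transporting the two relations of Lemma \ref{lemma:f^2e} through the $U_q(\mathfrak{sl}_2)$-module structure on $V$ given in Lemma \ref{lemma:Uaction}. Under that module structure, $e$ acts on $V$ as $(q-q^{-1})^{-1}\psi$ and $f$ acts on $V$ as $(q-q^{-1})^{-1}R$.

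For the first identity, I would start from the relation
\begin{equation*}
f^2 e - (q^2+q^{-2})fef + ef^2 = -\Lambda f
\end{equation*}
holding in $U_q(\mathfrak{sl}_2)$ by Lemma \ref{lemma:f^2e}, and evaluate both sides on $V$. Every term on the left is a product of three Chevalley generators chosen from $\{e,f\}$, so each term carries a factor of $(q-q^{-1})^{-3}$; the right-hand side carries a factor of $(q-q^{-1})^{-1}$. Clearing denominators by multiplying through by $(q-q^{-1})^3$ yields
\begin{equation*}
R^2\psi - (q^2+q^{-2})R\psi R + \psi R^2 = -(q-q^{-1})^2 \Lambda R,
\end{equation*}
which is exactly \eqref{eq:R^2psi}.

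For the second identity I would apply the same substitution to the companion relation
\begin{equation*}
e^2 f - (q^2+q^{-2})efe + fe^2 = -\Lambda e
\end{equation*}
from Lemma \ref{lemma:f^2e}, interchanging the roles of $\psi$ and $R$; again the common factor $(q-q^{-1})^{-3}$ on the left cancels against $(q-q^{-1})^{-1}$ on the right after multiplying by $(q-q^{-1})^3$, producing \eqref{eq:psi^2R}. There is no genuine obstacle in this argument: Lemma \ref{lemma:Uaction} does all the work, and the only bookkeeping is to track the powers of $(q-q^{-1})$ introduced by rescaling the Chevalley generators.
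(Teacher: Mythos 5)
Your proposal is correct and is essentially the paper's own argument: the paper proves this lemma by citing Lemma \ref{lemma:f^2e} and Lemma \ref{lemma:Uaction}, i.e.\ by evaluating the relations (\ref{eq:f^2e}), (\ref{eq:e^2f}) under the module structure in which $e,f$ act as $(q-q^{-1})^{-1}\psi$, $(q-q^{-1})^{-1}R$. Your bookkeeping of the powers of $(q-q^{-1})$ is exactly the right (and only) computation needed.
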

\begin{proof}
Use Lemma \ref{lemma:f^2e} and Lemma \ref{lemma:Uaction}.
\end{proof}

\begin{lemma}\label{lemma:Uwtspace}
For $0\leq i\leq d$, $U_i$ is the weight space of the $U_q(\mathfrak{sl}_2)$-module $V$ associated with the weight $q^{d-2i}$.
\end{lemma}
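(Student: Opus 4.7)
The plan is to unpack the definitions and reduce the claim to the already-established eigenspace decomposition for $K$. Since $k$ acts on $V$ as $K$ by Lemma \ref{lemma:Uaction}, the weight space of $V$ associated with $\lambda \in \K$ (in the sense of Definition \ref{def:wtspace}) coincides with the $\lambda$-eigenspace of $K$ on $V$. So it suffices to show that for $0 \leq i \leq d$, $U_i$ equals the eigenspace of $K$ associated with $q^{d-2i}$.

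First, I would recall from Section \ref{section:U} that $\{U_i\}_{i=0}^d$ is a decomposition of $V$, i.e.\ $V = \bigoplus_{i=0}^d U_i$. By Definition \ref{def:KB}, $(K - q^{d-2i}I)U_i = 0$ for $0 \leq i \leq d$, so each $U_i$ is contained in the $q^{d-2i}$-eigenspace of $K$. To upgrade this containment to an equality, I need the scalars $\{q^{d-2i}\}_{i=0}^d$ to be mutually distinct: if $q^{d-2i}=q^{d-2j}$ with $0 \leq i,j \leq d$, then $q^{2(j-i)}=1$, which forces $i=j$ in view of Lemma \ref{note:main}(ii). Hence the direct sum $V = \bigoplus_{i=0}^d U_i$ is a decomposition into distinct eigenspaces of $K$, and each $U_i$ is exactly the $q^{d-2i}$-eigenspace of $K$ on $V$.

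Combining this with the fact that $k$ acts as $K$ (Lemma \ref{lemma:Uaction}), Definition \ref{def:wtspace} yields that $U_i = V_{q^{d-2i}}$, the weight space of the $U_q(\mathfrak{sl}_2)$-module $V$ associated with the weight $q^{d-2i}$, completing the proof. There is no real obstacle here; the only subtlety is invoking Lemma \ref{note:main}(ii) to ensure the weights are pairwise distinct so that the eigenspace containments saturate.
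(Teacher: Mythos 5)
Your proof is correct and follows essentially the same route as the paper: the paper simply cites Definition \ref{def:KB} (where $U_i$ is declared to be the eigenspace of $K$ with eigenvalue $q^{d-2i}$) together with the fact that $k$ acts as $K$. Your extra step invoking Lemma \ref{note:main}(ii) to check that the scalars $q^{d-2i}$ are mutually distinct is a harmless elaboration of what the definition already builds in.
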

\begin{proof}
Recall from Definition \ref{def:KB} that $U_i$ is an eigenspace of $K$ with corresponding eigenvalue $q^{d-2i}$.  The result follows.
\end{proof}

\begin{cor}
The weights of the $U_q(\mathfrak{sl}_2)$-module $V$ are $q^d,q^{d-2},\mathellipsis,q^{-d}$.  
\end{cor}

\begin{lemma}\label{lemma:Ki-hw}
For $0\leq i\leq d/2$, $K_i$ is the highest weight space of the $U_q(\mathfrak{sl}_2)$-module $V$ associated with the weight $q^{d-2i}$.
\end{lemma}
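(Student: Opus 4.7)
The plan is to assemble the statement directly from three earlier results. By the definition of highest weight space just given, the highest weight space of the $U_q(\mathfrak{sl}_2)$-module $V$ associated with $q^{d-2i}$ is the kernel of the action of $e$ on the weight space $V_{q^{d-2i}}$. So two identifications are needed: the weight space $V_{q^{d-2i}}$, and the kernel of $e$ on that weight space.

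First I would invoke Lemma \ref{lemma:Uwtspace} to identify $V_{q^{d-2i}} = U_i$. Next, by Lemma \ref{lemma:Uaction}, the Chevalley generator $e$ acts on $V$ as $(q-q^{-1})^{-1}\psi$. Since $q^4 \neq 1$ forces $q^2 \neq 1$ and hence $q - q^{-1} \neq 0$, the kernel of $e$ on $U_i$ coincides with the kernel of $\psi$ on $U_i$. Finally, Lemma \ref{lemma:psiUkernel} states precisely that this kernel is $K_i$. Chaining these identifications gives that $K_i$ is the highest weight space of $V$ associated with the weight $q^{d-2i}$.

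There is no real obstacle here: every ingredient has been established in Sections \ref{section:Urefine}, \ref{section:psi}, and \ref{section:Umod}. The only small point to note is the nonvanishing of the scalar $q - q^{-1}$, which is immediate from the standing hypothesis $q^4 \neq 1$ in Assumption \ref{assump:main}. The proof is thus a two-line synthesis citing Lemma \ref{lemma:Uwtspace}, Lemma \ref{lemma:Uaction}, and Lemma \ref{lemma:psiUkernel}.
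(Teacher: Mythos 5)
Your proof is correct and follows essentially the same route as the paper, which cites Lemma \ref{lemma:Uwtspace} and Lemma \ref{lemma:psiUkernel}; you merely make explicit the intermediate use of Lemma \ref{lemma:Uaction} and the nonvanishing of $q-q^{-1}$, which the paper leaves implicit.
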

\begin{proof}
Use Lemma \ref{lemma:psiUkernel} and Lemma \ref{lemma:Uwtspace}.
\end{proof}

\begin{lemma}\label{lemma:Mv-mod}
Let $0\leq i\leq d/2$ and $0\neq v\in K_i$.  Then $Mv$ is an irreducible $U_q(\mathfrak{sl}_2)$-submodule of $V$.  The $U_q(\mathfrak{sl}_2)$-module $Mv$ is isomorphic to $L(d-2i,1)$.
\end{lemma}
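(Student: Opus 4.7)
The plan is to exhibit an explicit $U_q(\mathfrak{sl}_2)$-module isomorphism between $Mv$ and $L(d-2i,1)$, using the basis of $Mv$ provided by Corollary~\ref{cor:Mv1} together with the explicit formula (\ref{eq:psiclar}) for $\psi$. Set $n = d-2i$ and write $w_j = \tau_{i,j}(A)v$ for $i \leq j \leq d-i$, so that $w_i = v$ and $\{w_j\}_{j=i}^{d-i}$ is a basis for $Mv$ by Corollary~\ref{cor:Mv1}. By Lemma~\ref{lemma:refinement}, $w_j \in U_j$, so by Lemma~\ref{lemma:Uwtspace} each $w_j$ is a $k$-weight vector of weight $q^{d-2j}$. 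In particular, for $v_k := w_{i+k}$ with $0 \leq k \leq n$, the $k$-action matches that on $L(n,1)$.

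Next I would check that $Mv$ is closed under the actions of $e$ and $f$ from Lemma~\ref{lemma:Uaction}. For $f = (q-q^{-1})^{-1}R$: by Lemma~\ref{lemma:RonU}, $R$ acts on $U_j$ as $A - \theta_j I$, and the factorization $\tau_{i,j+1}(x) = (x - \theta_j)\tau_{i,j}(x)$ together with Lemma~\ref{lemma:MK} (the minimal polynomial of $A$ on $Mv$ is $\tau_{i,d-i+1}$) yields $Rw_j = w_{j+1}$ for $i \leq j < d-i$ and $Rw_{d-i} = 0$. For $e = (q-q^{-1})^{-1}\psi$: formula (\ref{eq:psiclar}) gives
\[
\psi w_j = (q^{j-i} - q^{i-j})(q^{d-i-j+1} - q^{i+j-d-1})\, w_{j-1},
\]
with $\psi w_i = 0$ (since $v \in K_i$ by Lemma~\ref{lemma:psiUkernel}, or equivalently by Lemma~\ref{lemma:Ki-hw}). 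Hence $Mv$ is indeed a $U_q(\mathfrak{sl}_2)$-submodule.

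To build the isomorphism $\phi: Mv \to L(n,1)$, I would rescale the basis so that the $f$-action acquires the factor $[k+1]_q$ expected in Lemma~\ref{lemma:?}. Defining $v_k = c_k w_{i+k}$ with $c_0 = 1$ and $c_{k+1} = c_k/(q^{k+1} - q^{-k-1})$ for $0 \leq k < n$, the computation $fv_k = (q-q^{-1})^{-1}(c_k/c_{k+1})v_{k+1} = [k+1]_q v_{k+1}$ matches $L(n,1)$. The check that the $e$-action then also matches is the crux: substituting the chosen $c_k$ into $\psi w_{i+k}$ via (\ref{eq:psiclar}) and simplifying yields
\[
ev_k = (q-q^{-1})^{-1}(q^{n+1-k} - q^{k-n-1})v_{k-1} = [n+1-k]_q v_{k-1},
\]
exactly as required. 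Hence $\phi$ is a $U_q(\mathfrak{sl}_2)$-module isomorphism onto $L(n,1)$.

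Finally, irreducibility of $Mv$ follows from Lemma~\ref{lemma:?}: since $n = d-2i \leq d$, the hypothesis $q^{2j} \neq 1$ for $1 \leq j \leq n$ holds by Lemma~\ref{note:main}(ii), so $L(n,1)$ is irreducible. The main technical obstacle is the scalar bookkeeping in step three: one must verify that the single normalization $c_k$ chosen to calibrate $f$ simultaneously produces the correct coefficient $[n+1-k]_q$ for $e$ — this is precisely where the two factors $(q^{j-i} - q^{i-j})$ and $(q^{d-i-j+1} - q^{i+j-d-1})$ in (\ref{eq:psiclar}) conspire with the telescoping ratio $c_{k}/c_{k-1}$ so that the residual factor is exactly $q^{n-k+1} - q^{k-n-1}$.
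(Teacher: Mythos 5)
Your proposal is correct and follows essentially the same route as the paper: both use the basis $\tau_{i,i+j}(A)v$ of $Mv$ from Corollary \ref{cor:Mv1}, normalized by exactly the same scalars (your $c_k$ equals the paper's $\gamma_k^{-1}=(q-q^{-1})^{-k}([k]_q^!)^{-1}$), and compute the actions of $R$, $\psi$, $K$ via Lemma \ref{lemma:RonU}, Lemma \ref{lemma:refinement}, and (\ref{eq:psiclar}) to match $L(d-2i,1)$ as in Lemma \ref{lemma:?}. The scalar bookkeeping you flag as the crux indeed works out, and your appeal to Lemma \ref{note:main}(ii) for irreducibility is the same as the paper's implicit use of it.
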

\begin{proof}
For $0\leq j\leq d-2i$, let  
$v_j=\gamma_j^{-1} \tau_{i,i+j}(A)v$, where $\gamma_j=(q-q^{-1})^{j}[j]_q^!$.  By Corollary \ref{cor:Mv1}, $\{v_j\}_{j=0}^{d-2i}$ is a basis for $Mv$.  
By Lemma \ref{lemma:RonU} and Lemma \ref{lemma:refinement}, $Rv_j=(q-q^{-1})[j+1]_qv_{j+1}$ for $0\leq j\leq d-2i-1$ and $Rv_{d-2i}=0$.  By (\ref{eq:psiclar}), $\psi v_0=0$ and $\psi v_j=(q-q^{-1})[d-2i+1-j]_q v_{j-1}$ for $1\leq j\leq d-2i$.  By Lemma \ref{lemma:refinement}, $Kv_j=q^{d-2i-2j}v_j$ for $0\leq j\leq d-2i$.  The result follows from the above comments along with Lemma \ref{lemma:?} and Lemma \ref{lemma:Uaction}.
\end{proof}

\begin{lemma}\label{lemma:MKi-hom}
For $0\leq i\leq d/2$, $MK_i$ is a $U_q(\mathfrak{sl}_2)$-submodule of $V$.  Moreover $MK_i$ is the homogeneous component of $V$ associated with $L(d-2i,1)$.
\end{lemma}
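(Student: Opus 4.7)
The plan is to establish both claims using the decomposition $V = \bigoplus_{j=0}^{\lfloor d/2\rfloor} MK_j$ from Proposition \ref{prop:VMK_i} together with Lemma \ref{lemma:Mv-mod}, which identifies each cyclic $M$-module $Mv$ (for $0\neq v\in K_i$) as an irreducible $U_q(\mathfrak{sl}_2)$-submodule isomorphic to $L(d-2i,1)$. With these two inputs in hand, the argument is essentially bookkeeping.

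First I would show $MK_i$ is a $U_q(\mathfrak{sl}_2)$-submodule of $V$. By definition $MK_i = \sum_{v\in K_i} Mv$, and each $Mv$ is a $U_q(\mathfrak{sl}_2)$-submodule by Lemma \ref{lemma:Mv-mod}; a sum of submodules is a submodule, so this is immediate. (In the degenerate case $K_i=0$ both sides of the claimed equality are zero, as will be seen below.)

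Next I would prove $MK_i$ equals the homogeneous component $H_i$ associated with $L(d-2i,1)$. The containment $MK_i\subseteq H_i$ is clear: $MK_i$ is spanned by the submodules $Mv$ with $0\neq v\in K_i$, each isomorphic to $L(d-2i,1)$. For the reverse containment, I would exploit Proposition \ref{prop:VMK_i} and Lemma \ref{lemma:Mv-mod} jointly: as a $U_q(\mathfrak{sl}_2)$-module, $V=\bigoplus_{j=0}^{\lfloor d/2\rfloor}MK_j$, where each summand is a sum of copies of $L(d-2j,1)$. Since $\dim L(d-2j,1)=d-2j+1$, the irreducibles $L(d-2j,1)$ for distinct $j$ are pairwise non-isomorphic, so $MK_j\cap H_i=0$ whenever $j\neq i$. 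Given $v\in H_i$, decompose $v=\sum_j v_j$ with $v_j\in MK_j$; then $v_j\in H_i\cap MK_j=0$ for $j\neq i$, forcing $v=v_i\in MK_i$. This also handles the case $K_i=0$, since then no copy of $L(d-2i,1)$ has a highest weight vector in $K_i$ (compare Lemma \ref{lemma:Ki-hw}), so $H_i=0$.

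There is no substantial obstacle here, since the real work was done in Lemma \ref{lemma:Mv-mod} and Proposition \ref{prop:VMK_i}. The one subtle point worth highlighting is that the conclusion requires the integers $d-2i$ (for $0\leq i\leq d/2$) to be pairwise distinct so that the $L(d-2i,1)$ fall into distinct isomorphism classes; this is automatic and is what forces the homogeneous components to coincide with the $MK_i$.
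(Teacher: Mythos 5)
Your argument is correct and takes essentially the same route as the paper, whose proof likewise just combines Proposition \ref{prop:VMK_i} with Lemma \ref{lemma:Mv-mod}. The only step to tighten is the assertion that $v_j\in H_i\cap MK_j$: strictly, one should apply the projection onto $MK_j$ coming from the direct sum of Proposition \ref{prop:VMK_i} (a $U_q(\mathfrak{sl}_2)$-module map) and note that its image of the isotypic space $H_i$ must vanish for $j\neq i$ because $MK_j$ is a sum of copies of $L(d-2j,1)$ and so contains no submodule isomorphic to $L(d-2i,1)$ --- a one-line fix that does not change the approach.
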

\begin{proof}
Use Proposition \ref{prop:VMK_i} and Lemma \ref{lemma:Mv-mod}. 
\end{proof}

\begin{lemma}\label{lemma:Ucasaction}
For $0\leq i\leq d/2$, $MK_i$ is an eigenspace for $\Lambda$ with corresponding eigenvalue $q^{d-2i+1}+q^{2i-d-1}$. 
\end{lemma}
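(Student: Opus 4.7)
The plan is to read off the eigenvalue directly by combining the structural description of $MK_i$ from Lemma \ref{lemma:MKi-hom} with the explicit Casimir action on irreducibles from Lemma \ref{lemma:cas}. The chain of reasoning is short because all the work has been done in the preceding lemmas.

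First I would invoke Lemma \ref{lemma:MKi-hom}, which says that $MK_i$ is a $U_q(\mathfrak{sl}_2)$-submodule of $V$ and coincides with the homogeneous component of $V$ associated with $L(d-2i,1)$. In particular, $MK_i$ is a direct sum of $U_q(\mathfrak{sl}_2)$-submodules, each isomorphic to $L(d-2i,1)$. Next I would apply Lemma \ref{lemma:cas} to each such summand: taking $n = d-2i$ and $\varepsilon = 1$, the Casimir element $\Lambda$ acts on every copy of $L(d-2i,1)$ as the scalar $q^{(d-2i)+1} + q^{-(d-2i)-1} = q^{d-2i+1} + q^{2i-d-1}$ times the identity. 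Since $\Lambda$ acts as this same scalar on each irreducible summand of $MK_i$, it acts as this scalar on all of $MK_i$, so $MK_i$ is contained in the $\Lambda$-eigenspace for this eigenvalue.

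To conclude that $MK_i$ is actually an eigenspace (not just contained in one), I would use the direct sum decomposition $V = \sum_{j=0}^{\lfloor d/2 \rfloor} MK_j$ from Proposition \ref{prop:VMK_i}, together with the observation that the scalars $q^{d-2j+1} + q^{2j-d-1}$ for $0 \leq j \leq d/2$ are mutually distinct: this follows from Lemma \ref{note:main}(ii), which guarantees $q^{2i} \neq 1$ for $1 \leq i \leq d$, and hence the function $j \mapsto q^{d-2j+1} + q^{2j-d-1}$ is injective on the relevant range. Therefore the $MK_j$ are the distinct $\Lambda$-eigenspaces, and the eigenvalue for $MK_i$ is exactly $q^{d-2i+1} + q^{2i-d-1}$, as claimed.

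There is no real obstacle here; the only mild subtlety is confirming the distinctness of the candidate eigenvalues to upgrade containment to equality. That step is a routine consequence of Lemma \ref{note:main}(ii), so the proof is essentially a two-line assembly of previously established results.
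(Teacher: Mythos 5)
Your proposal is correct and follows the paper's own argument: Lemma \ref{lemma:MKi-hom} plus Lemma \ref{lemma:cas} give that $\Lambda$ acts on $MK_i$ as the scalar $q^{d-2i+1}+q^{2i-d-1}$, and Proposition \ref{prop:VMK_i} together with the distinctness of these scalars (from Lemma \ref{note:main}) upgrades the containment to an equality of eigenspaces. Your explicit check of the distinctness via Lemma \ref{note:main}(ii) is just a spelled-out version of the step the paper cites.
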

\begin{proof}
By Lemma \ref{note:main}, the scalars $\{q^{d-2j+1}+q^{ 2j-d-1}\}_{j=0}^{\lfloor d/2\rfloor}$ are mutually distinct.  The result follows from this along with Proposition \ref{prop:VMK_i}, Lemma \ref{lemma:cas}, and Lemma \ref{lemma:MKi-hom}.
\end{proof}

\begin{lemma}
The $U_q(\mathfrak{sl}_2)$-module $V$ is semisimple.  
Let $W$ denote an irreducible $U_q(\mathfrak{sl}_2)$-submodule of $V$.  Then there exists an integer $i$ $(0\leq i\leq d/2)$ such that $W$ is isomorphic to $L(d-2i,1)$.
\end{lemma}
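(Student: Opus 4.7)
The plan is to assemble the semisimplicity from the decomposition $V=\bigoplus_{i=0}^{\lfloor d/2\rfloor} MK_i$ provided by Proposition \ref{prop:VMK_i}, combined with the fact that each summand $MK_i$ is itself a direct sum of irreducible $U_q(\mathfrak{sl}_2)$-submodules all isomorphic to $L(d-2i,1)$.

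First, I would fix $i$ with $0\le i\le d/2$ and assume $K_i\neq 0$ (otherwise $MK_i=0$ and there is nothing to do). Choose any basis $\{w_1,\dots,w_{n_i}\}$ of $K_i$. By Corollary \ref{cor:Mv1}, each $Mw_j$ has the explicit basis $w_j,\tau_{i,i+1}(A)w_j,\dots,\tau_{i,d-i}(A)w_j$, and by Lemma \ref{lemma:MK} the direct sum expansion of $MK_i$ along the terms $\tau_{i,i+h}(A)K_i$ forces
\begin{equation*}
MK_i = Mw_1 \oplus Mw_2 \oplus \cdots \oplus Mw_{n_i}
\end{equation*}
as $M$-modules. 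Now Lemma \ref{lemma:Mv-mod} upgrades each $Mw_j$ to an irreducible $U_q(\mathfrak{sl}_2)$-submodule isomorphic to $L(d-2i,1)$ (note that the irreducibility of $L(d-2i,1)$ is assured by Lemma \ref{note:main}(ii), which provides the hypothesis $q^{2j}\neq 1$ required by Lemma \ref{lemma:?}). Summing over $i$ and $j$ yields
\begin{equation*}
V \;=\; \bigoplus_{i=0}^{\lfloor d/2\rfloor}\bigoplus_{j=1}^{n_i} Mw_j,
\end{equation*}
exhibiting $V$ as a direct sum of irreducible $U_q(\mathfrak{sl}_2)$-submodules. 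Hence $V$ is semisimple.

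For the second assertion, let $W$ be any irreducible $U_q(\mathfrak{sl}_2)$-submodule of $V$. By Lemma \ref{lemma:MKi-hom} the subspace $MK_i$ is the homogeneous component of $V$ associated with $L(d-2i,1)$, and by Lemma \ref{lemma:Ucasaction} the elements $MK_i$ are distinguished as the eigenspaces of the central element $\Lambda$ with mutually distinct eigenvalues $q^{d-2i+1}+q^{2i-d-1}$. Since $\Lambda$ acts as a scalar on the irreducible submodule $W$ (by Schur, using that $\Lambda$ is central and $W$ is finite-dimensional irreducible), $W$ must lie in a single $\Lambda$-eigenspace, namely some $MK_i$. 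Then $W$ is an irreducible submodule of a direct sum of copies of $L(d-2i,1)$, so by projecting onto a nonzero summand and applying Schur's lemma we conclude $W\cong L(d-2i,1)$.

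The only mildly delicate point is ensuring that every irreducible submodule lies in some single $MK_i$; as indicated, the cleanest route is via the central action of $\Lambda$ together with Lemma \ref{lemma:Ucasaction}. Everything else is bookkeeping on top of the results already established in Sections \ref{section:Urefine}--\ref{section:Umod}.
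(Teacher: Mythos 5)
Your proof is correct and follows essentially the same route as the paper: the paper's own proof simply cites Proposition \ref{prop:VMK_i}, Lemma \ref{lemma:Mv-mod}, and Lemma \ref{lemma:MKi-hom}, i.e.\ exactly the decomposition of $V$ into the $MK_i$, each a sum of irreducible submodules isomorphic to $L(d-2i,1)$, that you spell out. Your additional details (the explicit basis of $K_i$ splitting $MK_i$ into copies of $L(d-2i,1)$, and the Casimir/Schur argument localizing $W$ into a single $MK_i$) are valid elaborations of that same argument rather than a different approach.
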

\begin{proof}
Use Proposition \ref{prop:VMK_i}, Lemma \ref{lemma:Mv-mod}, and Lemma \ref{lemma:MKi-hom}.
\end{proof}

\section{A $U_q(\mathfrak{sl}_2)$-module structure on $V$ associated with $\Phi^{\Downarrow}$}\label{section:Umoddd}

We continue to discuss the situation of Assumption \ref{assump:main}.  
In Section \ref{section:Umod} we used $\Phi$ to obtain a $U_q(\mathfrak{sl}_2)$-module structure on $V$.  In the present section, we consider the corresponding $U_q(\mathfrak{sl}_2)$-module structure on $V$ associated with $\Phi^\Downarrow$.\\  

Recall from Lemma \ref{lemma:psiequal} that $\psi^\Downarrow=\psi$. Applying Lemma  \ref{lemma:psidef'} to $\Phi^\Downarrow$ we obtain
\begin{equation}
\psi R^\Downarrow-R^\Downarrow \psi =(q-q^{-1})(B-B^{-1}).\label{eq:psiRdd--1}
\end{equation}
Recall from Lemma \ref{lemma:KRKinv} and Lemma \ref{lemma:KpsiKinv} that 
\begin{equation}
B\psi B^{-1}=q^2\psi,\qquad\qquad
BR^\Downarrow B^{-1}=q^{-2}R^\Downarrow.\label{eq:KR,Kpsidd-secUdd}
\end{equation}

\begin{lemma}\label{lemma:Uddaction}
With reference to Definition \ref{def:uqsl2}, there exists a $U_q(\mathfrak{sl}_2)$-module structure on $V$ for which the Chevalley generators act as follows:
\begin{center}
\begin{tabular}{c|cccc}
{\rm element of $U_q(\mathfrak{sl}_2)$} &  $e$ & $f$ & $k$ & $k^{-1}$
\\
\hline
{\rm action on $V$} & $(q-q^{-1})^{-1}\psi$ \ \ & $(q-q^{-1})^{-1}R^\Downarrow$ \ \ & $B$  \ & $B^{-1}$
\end{tabular}
\end{center}
\end{lemma}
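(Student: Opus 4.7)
The plan is to verify directly that the proposed assignments satisfy the defining relations of $U_q(\mathfrak{sl}_2)$ from Definition \ref{def:uqsl2}. This is the exact analog of how Lemma \ref{lemma:Uaction} was established from (\ref{eq:psiR-secUmod}) and (\ref{eq:KR,Kpsi}); here the ingredients (\ref{eq:psiRdd--1}) and (\ref{eq:KR,Kpsidd-secUdd}) have already been assembled immediately before the statement, so the proof should be essentially a one-liner invoking these three equations and Definition \ref{def:uqsl2}.

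More concretely, I would check the four relations in turn. The relation $kk^{-1} = k^{-1}k = 1$ is immediate since $B$ is invertible (noted right after Definition \ref{def:KB-B}). The relations $kek^{-1} = q^2 e$ and $kfk^{-1} = q^{-2}f$ become $B\psi B^{-1} = q^2\psi$ and $BR^\Downarrow B^{-1} = q^{-2}R^\Downarrow$, which are exactly the two equations in (\ref{eq:KR,Kpsidd-secUdd}); multiplying both sides by the scalar $(q-q^{-1})^{-1}$ does not disturb these. Finally, the commutation relation $ef - fe = (k - k^{-1})/(q-q^{-1})$ translates, after clearing the factor $(q-q^{-1})^{-2}$, into $\psi R^\Downarrow - R^\Downarrow \psi = (q-q^{-1})(B - B^{-1})$, which is (\ref{eq:psiRdd--1}).

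Since there is no obstacle beyond pointing at the three displayed equations, I would simply write: ``Use (\ref{eq:psiRdd--1}), (\ref{eq:KR,Kpsidd-secUdd}), and Definition \ref{def:uqsl2}.'' One could also remark, as motivation for the appearance of (\ref{eq:psiRdd--1}), that it is obtained by applying Lemma \ref{lemma:psidef'} to the TD system $\Phi^\Downarrow$ and then invoking Lemma \ref{lemma:psiequal} to replace $\psi^\Downarrow$ by $\psi$; this is precisely how the authors already derived (\ref{eq:psiRdd--1}) in the preceding paragraph, so no further work is required.
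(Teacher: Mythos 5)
Your proposal is correct and follows exactly the paper's own argument: the paper's proof is the one-liner ``Use (\ref{eq:psiRdd--1}), (\ref{eq:KR,Kpsidd-secUdd}), and Definition \ref{def:uqsl2},'' which is precisely what you arrive at after verifying the four defining relations. Your extra remarks on how (\ref{eq:psiRdd--1}) arises from Lemma \ref{lemma:psidef'} applied to $\Phi^\Downarrow$ together with Lemma \ref{lemma:psiequal} match the paper's setup in the paragraph preceding the lemma.
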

\begin{proof}
Use (\ref{eq:psiRdd--1}), (\ref{eq:KR,Kpsidd-secUdd}), and Definition \ref{def:uqsl2}.
\end{proof}
\bigskip

For the rest of this section, we will discuss the $U_q(\mathfrak{sl}_2)$-module $V$ from Lemma \ref{lemma:Uddaction}.\\
  
Recall the Casimir element $\Lambda$.  

\begin{lemma}\label{lemma:casimir-act2}
The action of $\Lambda$ on $V$ is equal to both
\begin{eqnarray}
&\psi R^\Downarrow+q^{-1}B+qB^{-1},\label{eq:casimir-act2-1}\\
&R^\Downarrow \psi+qB+q^{-1}B^{-1}.\label{eq:casimir-act2-2}
\end{eqnarray}
\end{lemma}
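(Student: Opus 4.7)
The plan is to mimic directly the proof of Lemma \ref{lemma:casimir-act1}, since Lemma \ref{lemma:Uddaction} gives us the action of the Chevalley generators $e,f,k,k^{\pm 1}$ on $V$ in a form completely analogous to Lemma \ref{lemma:Uaction}, just with $R$ replaced by $R^\Downarrow$ and $K$ replaced by $B$. The two equations we must verify are exactly the two expressions obtained by substituting these actions into the two presentations of $\Lambda$ given in (\ref{eq:casimirdef1}) and (\ref{eq:casimirdef2}).

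For the first equation, I would start from
\begin{equation*}
\Lambda = (q-q^{-1})^{2} ef + q^{-1}k + qk^{-1}
\end{equation*}
and apply Lemma \ref{lemma:Uddaction}. The factor $(q-q^{-1})^{2}$ cancels with the two scalar factors of $(q-q^{-1})^{-1}$ coming from the actions of $e$ and $f$, leaving the product $\psi R^\Downarrow$; the $k, k^{-1}$ terms become $B, B^{-1}$. This produces (\ref{eq:casimir-act2-1}). For the second equation, I would start from
\begin{equation*}
\Lambda = (q-q^{-1})^{2} fe + qk + q^{-1}k^{-1}
\end{equation*}
and perform the same substitution, with $fe$ yielding $R^\Downarrow \psi$ and the $k$-terms again translating into $B$-terms. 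This gives (\ref{eq:casimir-act2-2}).

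There is no genuine obstacle here; the only thing to be careful about is the order of the factors in the products $ef$ and $fe$, so that we correctly get $\psi R^\Downarrow$ from $ef$ and $R^\Downarrow \psi$ from $fe$ rather than swapping them. The proof amounts to two lines invoking Lemma \ref{lemma:Uddaction} together with (\ref{eq:casimirdef1}) and (\ref{eq:casimirdef2}).
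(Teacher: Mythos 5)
Your proof is correct and matches the paper's approach: the paper omits an explicit proof here precisely because the argument is the $\Phi^\Downarrow$-analogue of Lemma \ref{lemma:casimir-act1}, whose proof is exactly this substitution of the actions from the relevant module structure into (\ref{eq:casimirdef1}) and (\ref{eq:casimirdef2}). Your substitution of Lemma \ref{lemma:Uddaction} into the two presentations of $\Lambda$, with the noted care about the order of $\psi$ and $R^\Downarrow$, is all that is needed.
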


\begin{lemma}
The action of $\Lambda$ on $V$ commutes with each of
\begin{equation*}
 \psi,\qquad\qquad R^\Downarrow, \qquad\qquad B,\qquad\qquad A.
\end{equation*}
\end{lemma}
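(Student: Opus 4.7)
The plan is to proceed in direct parallel with the proof of the earlier commutation lemma in Section~\ref{section:Umod}, which handled the analogous statement for $\Phi$ in place of $\Phi^\Downarrow$. The key input is that the normalized Casimir element $\Lambda$ is central in $U_q(\mathfrak{sl}_2)$, so it commutes with every Chevalley generator $e, f, k, k^{-1}$. Via Lemma~\ref{lemma:Uddaction}, the action of each of $\psi$, $R^\Downarrow$, $B$, $B^{-1}$ on $V$ is a nonzero scalar multiple of the action of one of $e, f, k, k^{-1}$. Therefore the action of $\Lambda$ on $V$ commutes with each of $\psi$, $R^\Downarrow$, $B$ (and $B^{-1}$, though that is not in the claimed list).

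For the remaining claim that $\Lambda$ commutes with $A$, I would use equation~(\ref{eq:RAKdd}), which expresses $A$ as
\[
A = R^\Downarrow + a^{-1}B + aB^{-1}.
\]
Since $\Lambda$ commutes with $R^\Downarrow$, with $B$, and with $B^{-1}$ by the previous paragraph, it commutes with any $\K$-linear combination of these maps, and hence with $A$.

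I do not anticipate any real obstacle here: the argument is essentially a verbatim copy of the proof following Lemma~\ref{lemma:casimir-act1}, with $R$ replaced by $R^\Downarrow$, $K$ replaced by $B$, and the reference to (\ref{eq:RAK}) replaced by (\ref{eq:RAKdd}). The only point worth emphasizing in the write-up is that one uses (\ref{eq:RAKdd}) rather than (\ref{eq:RAK}), since $A$ has two different split-decomposition presentations and the second $U_q(\mathfrak{sl}_2)$-structure is tied to the one involving $B$.
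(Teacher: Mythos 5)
Your proof is correct and is essentially the paper's own argument: the paper omits the proof in Section~\ref{section:Umoddd} precisely because it is the proof of the corresponding lemma in Section~\ref{section:Umod} (centrality of $\Lambda$ plus Lemma~\ref{lemma:Uddaction}, then (\ref{eq:RAKdd}) for $A$) applied verbatim to $\Phi^\Downarrow$. Your handling of $A$ via $A=R^\Downarrow+a^{-1}B+aB^{-1}$ is exactly the intended substitution for the reference to (\ref{eq:RAK}).
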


\begin{lemma}\label{lemma:R^2psidd}
The following equations hold on $V$:
\begin{align}
(R^{\Downarrow})^2\psi-(q^2+q^{-2})R^{\Downarrow}\psi R^{\Downarrow}+\psi (R^{\Downarrow})^2&=-(q-q^{-1})^2\Lambda R^{\Downarrow},\label{eq:R^2psidd}\\
\psi^2 R^{\Downarrow}-(q^2+q^{-2})\psi R^{\Downarrow}\psi + R^{\Downarrow}\psi^2&=-(q-q^{-1})^2\Lambda\psi.\label{eq:psi^2Rdd}
\end{align}
\end{lemma}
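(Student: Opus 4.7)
The plan is to mirror exactly the proof of Lemma \ref{lemma:R^2psi}, substituting the second $U_q(\mathfrak{sl}_2)$-action from Lemma \ref{lemma:Uddaction} for the first one from Lemma \ref{lemma:Uaction}. The key input is Lemma \ref{lemma:f^2e}, which gives two abstract identities inside $U_q(\mathfrak{sl}_2)$, namely
\[
f^2e-(q^2+q^{-2})fef+ef^2 = -\Lambda f,\qquad e^2f-(q^2+q^{-2})efe+fe^2=-\Lambda e.
\]
These identities hold in any $U_q(\mathfrak{sl}_2)$-module, in particular in the module structure on $V$ provided by Lemma \ref{lemma:Uddaction}.

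First I would invoke Lemma \ref{lemma:Uddaction} to translate the Chevalley generators into operators on $V$: under this action $e$ acts as $(q-q^{-1})^{-1}\psi$ and $f$ acts as $(q-q^{-1})^{-1}R^\Downarrow$. Plugging these substitutions into the first identity of Lemma \ref{lemma:f^2e} yields
\[
(q-q^{-1})^{-3}\bigl((R^\Downarrow)^2\psi-(q^2+q^{-2})R^\Downarrow\psi R^\Downarrow+\psi(R^\Downarrow)^2\bigr) = -(q-q^{-1})^{-1}\Lambda R^\Downarrow.
\]
Clearing the scalar $(q-q^{-1})^{-3}$ by multiplying both sides by $(q-q^{-1})^{3}$ gives exactly (\ref{eq:R^2psidd}). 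The analogous substitution into the second identity of Lemma \ref{lemma:f^2e} gives (\ref{eq:psi^2Rdd}) in precisely the same way.

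There is no genuine obstacle here: the work was already done in Lemma \ref{lemma:f^2e} (which is a universal computation inside $U_q(\mathfrak{sl}_2)$) and in setting up the module structure in Lemma \ref{lemma:Uddaction}. The only thing to be careful about is the bookkeeping of the factor $(q-q^{-1})$: since $e$ and $f$ each acquire a factor of $(q-q^{-1})^{-1}$, every cubic monomial in $\{e,f\}$ on the left-hand side acquires $(q-q^{-1})^{-3}$, while the right-hand side, which is linear in $\{e,f\}$, acquires only $(q-q^{-1})^{-1}$, producing the net factor $(q-q^{-1})^{2}$ in front of $\Lambda$ in the final identities. The action of $\Lambda$ here is the one from Lemma \ref{lemma:casimir-act2}, but no explicit form of $\Lambda$ is required for the argument.
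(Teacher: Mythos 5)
Your proposal is correct and matches the paper's (implicit) argument: the paper proves the analogous Lemma \ref{lemma:R^2psi} by combining Lemma \ref{lemma:f^2e} with Lemma \ref{lemma:Uaction}, and the present lemma follows in exactly the same way using Lemma \ref{lemma:Uddaction}, with the same $(q-q^{-1})$ bookkeeping you describe. Your remark that $\Lambda$ here denotes its action under the second module structure (Lemma \ref{lemma:casimir-act2}) is also the intended reading.
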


\begin{lemma}\label{lemma:Uddwtspace}
For $0\leq i\leq d$, $U_i^\Downarrow$ is the weight space of the $U_q(\mathfrak{sl}_2)$-module $V$ associated with the weight $q^{d-2i}$.
\end{lemma}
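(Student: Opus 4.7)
The plan is to mimic the proof of Lemma \ref{lemma:Uwtspace}, with $K$ replaced by $B$ and the first split decomposition replaced by the second. Recall from Definition \ref{def:wtspace} that for a finite-dimensional $U_q(\mathfrak{sl}_2)$-module $V$ and $\lambda \in \K$, the weight space $V_\lambda$ is the set of vectors on which $k$ acts as $\lambda$. Under the $U_q(\mathfrak{sl}_2)$-action of Lemma \ref{lemma:Uddaction}, the Chevalley generator $k$ acts on $V$ as $B$.

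By Definition \ref{def:KB-B}, for $0 \leq i \leq d$, the subspace $U_i^\Downarrow$ is the eigenspace of $B$ with eigenvalue $q^{d-2i}$. Since $V = \sum_{i=0}^d U_i^\Downarrow$ is a direct sum of eigenspaces of $B$, and since by Lemma \ref{note:main}(ii) the scalars $\{q^{d-2i}\}_{i=0}^d$ are mutually distinct, the eigenspace decomposition of $B$ coincides precisely with the weight space decomposition of $V$ under the action described in Lemma \ref{lemma:Uddaction}. Therefore $U_i^\Downarrow = V_{q^{d-2i}}$, which is the desired conclusion.

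The argument is essentially immediate once one unwinds the definitions; no obstacle arises because the translation from the first setting to the second is a straightforward substitution $K \mapsto B$, $U_i \mapsto U_i^\Downarrow$. One need only invoke the distinctness of the eigenvalues $\{q^{d-2i}\}_{i=0}^d$ to ensure that no $U_i^\Downarrow$ is absorbed into a larger weight space.
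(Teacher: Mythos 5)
Your proof is correct and matches the paper's (implicit) argument: the paper proves the $\Phi$-version, Lemma \ref{lemma:Uwtspace}, by noting that $k$ acts as $K$ and $U_i$ is by definition the $q^{d-2i}$-eigenspace of $K$, and the $\Phi^\Downarrow$-version follows by exactly the substitution $K\mapsto B$, $U_i\mapsto U_i^\Downarrow$ that you carry out. Your extra remark about the distinctness of the scalars $q^{d-2i}$ is harmless and only reinforces what Definition \ref{def:KB-B} already guarantees.
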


\begin{cor}
The weights of the $U_q(\mathfrak{sl}_2)$-module $V$ are $q^d,q^{d-2},\mathellipsis,q^{-d}$.  
\end{cor}

\begin{lemma}
For $0\leq i\leq d/2$, $K_i$ is the highest weight space of the $U_q(\mathfrak{sl}_2)$-module $V$ associated with the weight $q^{d-2i}$.
\end{lemma}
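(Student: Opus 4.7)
The plan is to mirror the proof of Lemma \ref{lemma:Ki-hw} exactly, but now applied to the $U_q(\mathfrak{sl}_2)$-module structure coming from $\Phi^\Downarrow$ instead of $\Phi$. Unwinding the definition of highest weight space, what we must show is that $K_i$ equals the kernel of the action of $e$ on the $\lambda = q^{d-2i}$ weight space of $V$, where $e$ acts as $(q-q^{-1})^{-1}\psi$ by Lemma \ref{lemma:Uddaction}. Since $(q-q^{-1})^{-1}$ is a nonzero scalar, this kernel coincides with the kernel of $\psi$ on the corresponding weight space.

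Next, by Lemma \ref{lemma:Uddwtspace}, the weight space of the present $U_q(\mathfrak{sl}_2)$-module associated with $q^{d-2i}$ is $U_i^\Downarrow$. Thus the task reduces to identifying the kernel of $\psi$ restricted to $U_i^\Downarrow$ with $K_i$.

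To establish this identification, I would invoke Lemma \ref{lemma:psiUkernel}, but applied to the second inversion $\Phi^\Downarrow$ rather than $\Phi$. That application yields that $K_i^\Downarrow$ is the kernel of $\psi^\Downarrow$ acting on $U_i^\Downarrow$. Two compatibilities then finish the job: first, $\psi^\Downarrow = \psi$ by Lemma \ref{lemma:psiequal}; and second, $K_i^\Downarrow = K_i$ as recorded in Definition \ref{def:K_i}. Substituting these two equalities into the statement just obtained gives exactly that $K_i$ is the kernel of $\psi$ on $U_i^\Downarrow$, which, combined with the first paragraph, is the desired claim.

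There is no real obstacle here: the entire statement is essentially the $\Downarrow$-image of Lemma \ref{lemma:Ki-hw}, and all the needed ingredients (the invariance $\psi^\Downarrow=\psi$, the self-duality $K_i^\Downarrow=K_i$, the weight-space identification for $B$, and the Chevalley action for the second module structure) have already been established. The only point that requires any care is to cite the $\Downarrow$-version of Lemma \ref{lemma:psiUkernel} rather than the version stated for $\Phi$; this is legitimate because $\Phi^\Downarrow$ is itself a TD system of $q$-Racah type (with the same parameter $q$), so every result in Sections \ref{section:U}--\ref{section:psi} applies to it verbatim.
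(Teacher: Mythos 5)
Your proof is correct and is exactly the argument the paper intends: this lemma is stated in the $\Phi^\Downarrow$ section without an explicit proof precisely because it is the $\Downarrow$-image of Lemma \ref{lemma:Ki-hw}, obtained by applying Lemma \ref{lemma:psiUkernel} and Lemma \ref{lemma:Uwtspace} to $\Phi^\Downarrow$ and using $\psi^\Downarrow=\psi$ and $K_i^\Downarrow=K_i$, just as you do.
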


\begin{lemma}
Let $0\leq i\leq d/2$ and $0\neq v\in K_i$.  Then $Mv$ is an irreducible $U_q(\mathfrak{sl}_2)$-submodule of $V$.  The $U_q(\mathfrak{sl}_2)$-module $Mv$ is isomorphic to $L(d-2i,1)$.
\end{lemma}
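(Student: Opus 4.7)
The plan is to deduce this statement directly from Lemma \ref{lemma:Mv-mod} applied to the second inversion $\Phi^\Downarrow$, rather than reproducing the calculation. The key observation is that the ingredients of the statement are invariant under $\Phi \mapsto \Phi^\Downarrow$: the subalgebra $M \subseteq {\rm End}(V)$ generated by $A$ is unchanged (since $A$ itself is unchanged by the second inversion), hence $Mv$ denotes the same subspace of $V$; by Definition \ref{def:K_i} we have $K_i^\Downarrow = K_i$; and by Lemma \ref{lemma:psiequal} we have $\psi^\Downarrow = \psi$. Moreover $\Phi^\Downarrow$ is itself of $q$-Racah type (with parameters $a^{-1}$ and $b$), so Assumption \ref{assump:main} continues to hold for $\Phi^\Downarrow$.

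Next I would verify that the $U_q(\mathfrak{sl}_2)$-module structure of Lemma \ref{lemma:Uddaction} is precisely the one that Lemma \ref{lemma:Uaction} produces when applied to $\Phi^\Downarrow$. This is a routine comparison of the two Chevalley-generator tables, using $K^\Downarrow = B$ from Definition \ref{def:KB-B}, the identity $R^\Downarrow = A - a^{-1}B - a B^{-1}$ obtained from (\ref{eq:RAK}) under the inversion $a \mapsto a^{-1}$, $K \mapsto B$, and $\psi^\Downarrow = \psi$. Once this identification is in place, Lemma \ref{lemma:Mv-mod} applied to $\Phi^\Downarrow$ and the vector $0 \neq v \in K_i^\Downarrow = K_i$ immediately yields that $Mv$ is an irreducible $U_q(\mathfrak{sl}_2)$-submodule of $V$ under the action of Lemma \ref{lemma:Uddaction}, isomorphic to $L(d-2i, 1)$.

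The main bookkeeping concern is confirming that the isomorphism type remains $L(d-2i,1)$, with the same dimension $d-2i+1$ and the same type $\varepsilon = +1$, after passing to $\Phi^\Downarrow$. This reduces to checking that $v$ is still a highest-weight vector of weight $q^{d-2i}$ for the new action: indeed, $K_i \subseteq U_i^\Downarrow$ by Definition \ref{def:K_i}, so $Bv = q^{d-2i}v$, while $\psi v = 0$ follows from Lemma \ref{lemma:psiUkernel} applied to $\Phi^\Downarrow$ combined with $\psi^\Downarrow = \psi$. Since these are the same numerical data as in Lemma \ref{lemma:Mv-mod}, the resulting module is again $L(d-2i,1)$. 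A hands-on alternative, should a more explicit argument be preferred, would be to mimic the proof of Lemma \ref{lemma:Mv-mod} by producing a basis of $Mv$ via the analog of Corollary \ref{cor:Mv1} with $\tau^\Downarrow_{ij}(A)$, computing the actions of $R^\Downarrow$, $\psi$, and $B$ on this basis, and then invoking Lemma \ref{lemma:?}.
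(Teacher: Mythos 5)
Your proposal is correct and matches the paper's intended argument: the paper states this lemma without proof precisely because it follows by applying Lemma \ref{lemma:Mv-mod} (and the surrounding Section \ref{section:Umod} results) to $\Phi^\Downarrow$, using $K_i^\Downarrow=K_i$, $\psi^\Downarrow=\psi$, $K^\Downarrow=B$, and the fact that $M$ is unchanged since $A^\Downarrow=A$. Your verification that $\Phi^\Downarrow$ is again of $q$-Racah type (with $a\mapsto a^{-1}$) and that Lemma \ref{lemma:Uddaction} is Lemma \ref{lemma:Uaction} applied to $\Phi^\Downarrow$ supplies exactly the bookkeeping the paper leaves implicit.
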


\begin{lemma}
For $0\leq i\leq d/2$, $MK_i$ is a $U_q(\mathfrak{sl}_2)$-submodule of $V$.  Moreover $MK_i$ is the homogeneous component of $V$ associated with $L(d-2i,1)$.
\end{lemma}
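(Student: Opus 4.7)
The plan is to follow the same strategy used to establish Lemma \ref{lemma:MKi-hom} for the first $U_q(\mathfrak{sl}_2)$-module structure, now applied to the second structure from Lemma \ref{lemma:Uddaction}. The two essential inputs are Proposition \ref{prop:VMK_i}, which gives the direct-sum decomposition $V = \sum_{i=0}^{\lfloor d/2 \rfloor} MK_i$ of $V$ as an $M$-module, together with the immediately preceding lemma of this section, which asserts that for every $0 \neq v \in K_i$ the $M$-submodule $Mv$ is in fact an irreducible $U_q(\mathfrak{sl}_2)$-submodule of $V$ isomorphic to $L(d-2i, 1)$ under the second action.

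For the first claim, I would write $MK_i = \sum_{v \in K_i} Mv$. Each summand is a $U_q(\mathfrak{sl}_2)$-submodule by the preceding lemma, and a sum of $U_q(\mathfrak{sl}_2)$-submodules is again a $U_q(\mathfrak{sl}_2)$-submodule; this yields the submodule assertion. Moreover each such summand is irreducible and isomorphic to $L(d-2i,1)$, so $MK_i$ is contained in the homogeneous component of $V$ associated with $L(d-2i,1)$.

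For the reverse inclusion I would argue as follows. By Lemma \ref{note:main}(ii) the integers $d-2i$ for $0 \leq i \leq \lfloor d/2 \rfloor$ are pairwise distinct and nonnegative, and $q^{2j} \neq 1$ for $1 \leq j \leq d$, which together imply that the $U_q(\mathfrak{sl}_2)$-modules $L(d-2i, 1)$ are pairwise non-isomorphic. Since the decomposition $V = \sum_i MK_i$ is direct and each $MK_i$ supports only the single isomorphism type $L(d-2i,1)$ among its irreducible constituents, any $U_q(\mathfrak{sl}_2)$-submodule of $V$ isomorphic to $L(d-2i, 1)$ must lie entirely inside $MK_i$. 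Hence the homogeneous component associated with $L(d-2i, 1)$ is contained in $MK_i$, giving equality. The only subtle point, which is the main (and quite mild) obstacle, is this non-isomorphism bookkeeping; otherwise the argument is a direct translation of the proof of Lemma \ref{lemma:MKi-hom}.
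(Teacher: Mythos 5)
Your proposal is correct and follows the paper's approach: the paper proves this (via the proof of Lemma \ref{lemma:MKi-hom}, transferred to $\Phi^\Downarrow$) by combining Proposition \ref{prop:VMK_i} with the preceding lemma stating that $Mv$ is an irreducible $U_q(\mathfrak{sl}_2)$-submodule isomorphic to $L(d-2i,1)$, exactly as you do. Your extra bookkeeping for the reverse inclusion (the $L(d-2i,1)$ are pairwise non-isomorphic, so an irreducible submodule of that type cannot project nontrivially onto $MK_j$ for $j\neq i$) is the standard isotypic-component argument the paper leaves implicit.
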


\begin{lemma}\label{lemma:Uddcasaction}
For $0\leq i\leq d/2$, $MK_i$ is an eigenspace for $\Lambda$ with corresponding eigenvalue $q^{d-2i+1}+q^{ 2i-d-1}$.
\end{lemma}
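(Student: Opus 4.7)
The plan is to mirror the proof of Lemma \ref{lemma:Ucasaction} verbatim, replacing only the $U_q(\mathfrak{sl}_2)$-module structure on $V$ by the one from Lemma \ref{lemma:Uddaction}. The essential point is that once we know $MK_i$ is the homogeneous component associated with $L(d-2i,1)$, the eigenvalue of $\Lambda$ on $MK_i$ is forced by Lemma \ref{lemma:cas}.

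More concretely, first I would invoke the preceding lemma in Section \ref{section:Umoddd} (the analog of Lemma \ref{lemma:MKi-hom}), which asserts that $MK_i$ is the homogeneous component of the $U_q(\mathfrak{sl}_2)$-module $V$ (from Lemma \ref{lemma:Uddaction}) associated with $L(d-2i,1)$. In particular $MK_i$ is a direct sum of submodules each isomorphic to $L(d-2i,1)$. By Lemma \ref{lemma:cas}, the Casimir element $\Lambda$ acts on each copy of $L(d-2i,1)$ as the scalar $q^{d-2i+1}+q^{-(d-2i+1)} = q^{d-2i+1}+q^{2i-d-1}$. Hence $\Lambda$ acts on $MK_i$ as this scalar, so $MK_i$ is contained in the $\Lambda$-eigenspace for this eigenvalue.

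To conclude that $MK_i$ is the full eigenspace, I would use Lemma \ref{note:main}(ii), which guarantees $q^{2j}\neq 1$ for $1\le j\le d$ and thereby ensures that the scalars $\{q^{d-2j+1}+q^{2j-d-1}\}_{j=0}^{\lfloor d/2\rfloor}$ are mutually distinct. Combined with Proposition \ref{prop:VMK_i}, which gives $V=\sum_{j=0}^{\lfloor d/2\rfloor}MK_j$ as a direct sum, this forces the $\Lambda$-eigenspace for eigenvalue $q^{d-2i+1}+q^{2i-d-1}$ to equal precisely $MK_i$.

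There is no real obstacle here; the argument is structurally identical to the proof of Lemma \ref{lemma:Ucasaction}, and all prerequisite facts (the homogeneous component description, the direct sum decomposition $V=\sum_j MK_j$, and the distinctness of the candidate eigenvalues) have already been established in the $\Phi^{\Downarrow}$-setting by the preceding lemmas of Section \ref{section:Umoddd}. The only thing one should double-check is that the distinctness statement from Lemma \ref{note:main} applies symmetrically to $\Phi$ and $\Phi^{\Downarrow}$, which it clearly does since $\Phi^\Downarrow$ has the same parameters $q,a,b$ up to the relabeling $\theta_i\mapsto\theta_{d-i}$.
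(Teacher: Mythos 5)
Your proposal is correct and matches the paper's approach: the paper proves the $\Phi$-version (Lemma \ref{lemma:Ucasaction}) from the distinctness of the scalars $q^{d-2j+1}+q^{2j-d-1}$ (Lemma \ref{note:main}), Proposition \ref{prop:VMK_i}, Lemma \ref{lemma:cas}, and the homogeneous-component lemma, and the $\Phi^{\Downarrow}$-version is obtained exactly as you describe, by running the same argument with the module structure of Lemma \ref{lemma:Uddaction} and its analogous preceding lemmas (using $K_i^{\Downarrow}=K_i$ and that $q$ is unchanged under $\Downarrow$).
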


\begin{lemma}
The $U_q(\mathfrak{sl}_2)$-module $V$ is semisimple.  
Let $W$ denote an irreducible $U_q(\mathfrak{sl}_2)$-submodule of $V$.  Then there exists an integer $i$ $(0\leq i\leq d/2)$ such that $W$ is isomorphic to $L(d-2i,1)$. 
\end{lemma}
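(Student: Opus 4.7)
The plan is to mimic the proof of the analogous statement (the last lemma of Section~\ref{section:Umod}) for the first $U_q(\mathfrak{sl}_2)$-module structure, substituting the corresponding Section~\ref{section:Umoddd} lemmas at each step. All the structural work has already been done in the preceding lemmas of the section, so the proof is essentially a citation argument.

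First, I would show semisimplicity. By Proposition~\ref{prop:VMK_i}, $V = \bigoplus_{i=0}^{\lfloor d/2\rfloor} MK_i$ as $M$-modules, and the preceding lemma (the analogue of Lemma~\ref{lemma:MKi-hom} for $\Phi^\Downarrow$) asserts that each $MK_i$ is a $U_q(\mathfrak{sl}_2)$-submodule. For each $i$ choose a basis $\{v_{i,j}\}_j$ of $K_i$. By Corollary~\ref{cor:Mv1} the sum $MK_i = \sum_j Mv_{i,j}$ is direct, and by the analogue of Lemma~\ref{lemma:Mv-mod} each $Mv_{i,j}$ is an irreducible $U_q(\mathfrak{sl}_2)$-submodule isomorphic to $L(d-2i,1)$. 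Assembling these decompositions gives
\begin{equation*}
V = \bigoplus_{i=0}^{\lfloor d/2\rfloor}\bigoplus_{j} Mv_{i,j},
\end{equation*}
which exhibits $V$ as a direct sum of irreducible $U_q(\mathfrak{sl}_2)$-submodules. Hence $V$ is semisimple.

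Next, let $W$ be an irreducible $U_q(\mathfrak{sl}_2)$-submodule of $V$. By the analogue of Lemma~\ref{lemma:Ucasaction} for $\Phi^\Downarrow$ (the preceding lemma, labelled Lemma~\ref{lemma:Uddcasaction}), the Casimir element $\Lambda$ acts on $MK_i$ as the scalar $q^{d-2i+1}+q^{2i-d-1}$, and by Lemma~\ref{note:main} these scalars are mutually distinct in $i$. Since $\Lambda$ is central and $W$ is irreducible, $\Lambda$ acts on $W$ as a single scalar; comparing with the eigenspace decomposition of $\Lambda$ on $V$ forces $W \subseteq MK_i$ for some $i$. By the homogeneous-component description of $MK_i$, every irreducible submodule of $MK_i$ is isomorphic to $L(d-2i,1)$, so $W \cong L(d-2i,1)$ as claimed.

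There is no real obstacle here; the only thing to be careful about is that every ingredient one invokes is the $\Phi^\Downarrow$-version (from the preceding block of lemmas in Section~\ref{section:Umoddd}) rather than the $\Phi$-version from Section~\ref{section:Umod}. In particular one uses the second $U_q(\mathfrak{sl}_2)$-action from Lemma~\ref{lemma:Uddaction}, under which the Chevalley generator $f$ acts as $(q-q^{-1})^{-1}R^\Downarrow$, and one uses that the subspaces $K_i$ and $MK_i$ are the same for $\Phi$ and for $\Phi^\Downarrow$ (since $K_i^\Downarrow = K_i$ by Definition~\ref{def:K_i} and $M$ is determined by $A$, which is unchanged under $\Downarrow$).
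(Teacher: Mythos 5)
Your proposal is correct and follows essentially the same route as the paper: the paper's proof (via the $\Phi^\Downarrow$-analogue of the corresponding Section \ref{section:Umod} lemma) simply cites Proposition \ref{prop:VMK_i} together with the $\Downarrow$-versions of Lemmas \ref{lemma:Mv-mod} and \ref{lemma:MKi-hom}, which is exactly the decomposition $V=\bigoplus_i\bigoplus_j Mv_{i,j}$ you assemble. Your extra Casimir-eigenvalue step for locating $W$ inside some $MK_i$ is a harmless elaboration of what the homogeneous-component lemma already provides.
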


\begin{note}\label{note:uqsl2hat}
{\rm 
The paper \cite{IT:qRacah} describes an action of $U_q(\widehat{\mathfrak{sl}}\sb 2)$ on $V$.
Roughly speaking, $U_q(\widehat{\mathfrak{sl}}\sb 2)$ is generated by two copies of $U_q(\mathfrak{sl}_2)$ that are glued together in a certain way \cite[p. 262]{chari&pressley}.  Thus the action of $U_q(\widehat{\mathfrak{sl}}\sb 2)$ on $V$ induces two actions of $U_q(\mathfrak{sl}_2)$ on $V$.  For these actions the Chevalley generator $e$ does not act as a scalar multiple of $\psi$ {\rm \cite[Lines (28) and (30)]{IT:qRacah}}.  
Therefore the two $U_q(\mathfrak{sl}_2)$-actions from \cite{IT:qRacah} are not the same as the two $U_q(\mathfrak{sl}_2)$-actions from Lemma \ref{lemma:Uaction} and Lemma \ref{lemma:Uddaction}.   
As far as we know, the two $U_q(\mathfrak{sl}_2)$-actions from \cite{IT:qRacah} are not directly related to the  
$U_q(\mathfrak{sl}_2)$-actions from Lemma \ref{lemma:Uaction} and Lemma \ref{lemma:Uddaction}.}
\end{note}

\section{How $\psi, K^{\pm 1},B^{\pm 1}$ are related}\label{section:related}

We continue to discuss the situation of Assumption \ref{assump:main}.  
In Sections \ref{section:Umod} and \ref{section:Umoddd} we introduced two $U_q(\mathfrak{sl}_2)$-module structures on $V$.  
In this section we compare these module structures.  
From this comparison, we obtain several equations relating $\psi, K^{\pm 1}, B^{\pm 1}$.\\

Recall the Casimir element $\Lambda$ of $U_q(\mathfrak{sl}_2)$ from Section \ref{section:uqsl2}.

\begin{lemma}\label{lemma:4exp}
The following coincide:
\begin{itemize}
\item[{\rm (i)}] the action of $\Lambda$ on $V$ for the $U_q(\mathfrak{sl}_2)$-module structure from Lemma \ref{lemma:Uaction},
\item[{\rm (ii)}] the action of $\Lambda$ on $V$ for the $U_q(\mathfrak{sl}_2)$-module structure from Lemma \ref{lemma:Uddaction}.
\end{itemize}
\end{lemma}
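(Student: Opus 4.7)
The plan is to exploit the fact that both $U_q(\mathfrak{sl}_2)$-module structures share the same eigenspace decomposition of $\Lambda$, and that this decomposition is determined by data intrinsic to the TD pair, independent of which module structure we choose.

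First I would invoke the direct sum decomposition $V=\sum_{i=0}^{\lfloor d/2\rfloor} MK_i$ from Proposition \ref{prop:VMK_i}. Note that $M$ is the subalgebra of $\mathrm{End}(V)$ generated by $A$, and the subspaces $K_i$ are defined in Definition \ref{def:K_i} purely in terms of the TD system $\Phi$. In particular, the subspaces $MK_i$ are the same objects in both Section \ref{section:Umod} and Section \ref{section:Umoddd}; they do not depend on which $U_q(\mathfrak{sl}_2)$-module structure is in play.

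Next, for the first module structure (Lemma \ref{lemma:Uaction}), Lemma \ref{lemma:Ucasaction} asserts that $\Lambda$ acts on $MK_i$ as the scalar $q^{d-2i+1}+q^{2i-d-1}I$. For the second module structure (Lemma \ref{lemma:Uddaction}), Lemma \ref{lemma:Uddcasaction} gives the identical scalar action $q^{d-2i+1}+q^{2i-d-1}I$ on the same subspace $MK_i$. Hence the two operators agree on each summand of the decomposition in Proposition \ref{prop:VMK_i}, and therefore agree on all of $V$.

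There is really no significant obstacle here, since the two preceding sections were organized precisely to make this comparison immediate; the only thing to double-check is that the subspaces $MK_i$ appearing in Lemma \ref{lemma:Ucasaction} and Lemma \ref{lemma:Uddcasaction} coincide, which is clear from their definitions in terms of $A$ and $K_i$ alone. The real content of the lemma is not the proof but the consequence: equating the two expressions for $\Lambda$ from Lemma \ref{lemma:casimir-act1} and Lemma \ref{lemma:casimir-act2} will yield nontrivial identities linking $\psi, R, R^\Downarrow, K^{\pm 1}, B^{\pm 1}$, which is the point of the ensuing section.
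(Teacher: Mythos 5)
Your proposal is correct and follows exactly the paper's argument: the paper's proof is simply to combine Proposition \ref{prop:VMK_i} with Lemmas \ref{lemma:Ucasaction} and \ref{lemma:Uddcasaction}, i.e.\ both Casimir actions are the scalar $q^{d-2i+1}+q^{2i-d-1}$ on each summand $MK_i$ of the decomposition $V=\sum_{i=0}^{\lfloor d/2\rfloor} MK_i$, which is intrinsic to the TD system. Nothing further is needed.
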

\begin{proof}
Use Proposition \ref{prop:VMK_i}, Lemma \ref{lemma:Ucasaction}, and Lemma \ref{lemma:Uddcasaction}.
\end{proof}

\begin{prop}\label{prop:coincide}
The following coincide:
\begin{align*}
(I-aq\psi)K, \qquad (I-a^{-1}q\psi)B, \qquad K(I-aq^{-1}\psi), \qquad B(1-a^{-1}q^{-1}\psi).
\end{align*}
Moreover the following coincide:
\begin{align*}
(I-a^{-1}q^{-1}\psi)K^{-1}, \qquad (I-aq^{-1}\psi)B^{-1}, \qquad K^{-1}(I-a^{-1}q\psi), \qquad B^{-1}(1-aq\psi).
\end{align*}
\end{prop}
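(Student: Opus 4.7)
The proof plan is to exploit the agreement of the Casimir actions (Lemma \ref{lemma:4exp}) together with the commutation relations $K\psi=q^2\psi K$ and $B\psi=q^2\psi B$ from Lemma \ref{lemma:KpsiKinv}.

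First I observe that the commutation relations alone reduce the claim about the four expressions in the first set to a single equation. Indeed, using $K\psi=q^2\psi K$,
$$K(I-aq^{-1}\psi)=K-aq^{-1}K\psi=K-aq\psi K=(I-aq\psi)K,$$
and similarly $B(I-a^{-1}q^{-1}\psi)=(I-a^{-1}q\psi)B$. So the first and third expressions automatically coincide, as do the second and fourth, and the content of the first assertion is that $(I-aq\psi)K=(I-a^{-1}q\psi)B$. An identical argument using $K^{-1}\psi=q^{-2}\psi K^{-1}$ and $B^{-1}\psi=q^{-2}\psi B^{-1}$ reduces the second assertion to $(I-aq^{-1}\psi)B^{-1}=(I-a^{-1}q^{-1}\psi)K^{-1}$.

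Next I would extract these two equations from the Casimir agreement. By Lemma \ref{lemma:4exp} together with (\ref{eq:casimir-act1-1})--(\ref{eq:casimir-act2-2}), the expressions in Lemma \ref{lemma:casimir-act1} agree with those in Lemma \ref{lemma:casimir-act2}. Equating (\ref{eq:casimir-act1-1}) and (\ref{eq:casimir-act2-1}) gives $\psi(R-R^{\Downarrow})=q^{-1}B+qB^{-1}-q^{-1}K-qK^{-1}$; equating (\ref{eq:casimir-act1-2}) and (\ref{eq:casimir-act2-2}) gives $(R-R^{\Downarrow})\psi=qB+q^{-1}B^{-1}-qK-q^{-1}K^{-1}$. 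Substituting the expression for $R-R^{\Downarrow}$ from (\ref{eq:Rdiff}), commuting $\psi$ past $K^{\pm1}, B^{\pm1}$ as needed, and collecting the terms attached to $K, K^{-1}, B, B^{-1}$, a routine computation rewrites these two identities as
$$q^{-1}Y=qZ,\qquad qY=q^{-1}Z,$$
where $Y=(I-aq\psi)K-(I-a^{-1}q\psi)B$ and $Z=(I-aq^{-1}\psi)B^{-1}-(I-a^{-1}q^{-1}\psi)K^{-1}$.

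Finally I combine the two relations: multiplying the first by $q$ and subtracting the second yields $(q^2-q^{-2})Z=0$; since $q^4\neq1$ by Assumption \ref{assump:main}, this forces $Z=0$, and then either relation gives $Y=0$. Together with the commutation identities in the first paragraph, this establishes both assertions of the proposition.

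The main bookkeeping obstacle is the careful commutation step: one must pull $\psi$ consistently to one side of each of $K^{\pm1}, B^{\pm1}$ so that the two Casimir-agreement identities present themselves cleanly as multiples of the \emph{same} two quantities $Y$ and $Z$. Once that is done, the cancellation using $q^4\neq1$ is immediate.
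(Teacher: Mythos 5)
Your proposal is correct and takes essentially the same route as the paper: both rest on Lemma \ref{lemma:4exp} plus the commutation relations of Lemma \ref{lemma:KpsiKinv}, eliminating $R$ and $R^{\Downarrow}$ (via (\ref{eq:Rdiff}), equivalently (\ref{eq:RAK}) and (\ref{eq:RAKdd})); the paper merely packages your two Casimir identities into a single linear combination giving $(I-aq\psi)K=(I-a^{-1}q\psi)B$ and then obtains the remaining equalities by commutation, exactly as in your first paragraph. One small wording fix: ``multiplying the first by $q$ and subtracting the second'' should be ``multiplying the first by $q$, the second by $q^{-1}$, and subtracting,'' which is what yields $(q^{2}-q^{-2})Z=0$ and, with $q^{4}\neq 1$, forces $Z=0$ and then $Y=0$.
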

\begin{proof}
Consider the expression $q^{-1}$ times (\ref{eq:casimir-act1-1}) minus $q$ times (\ref{eq:casimir-act1-2}) minus $q^{-1}$ times (\ref{eq:casimir-act2-1}) plus $q$ times (\ref{eq:casimir-act2-2}).  We evaluate this expression in two ways.  First, by Lemma \ref{lemma:4exp} this expression is equal to zero.  Second, eliminate $R$ and $R^\Downarrow$ using (\ref{eq:RAK}) and (\ref{eq:RAKdd}) and simplify the result using Lemma \ref{lemma:KpsiKinv}.  By these comments,
\begin{equation}
(1-aq\psi)K=(1-qa^{-1}\psi)B.\label{eq:coincidework-1}
\end{equation}
The remaining assertions follow from (\ref{eq:coincidework-1}) and Lemma \ref{lemma:KpsiKinv}.
\end{proof}

Shortly we will write $KB^{-1},$ $K^{-1}B,$ and their inverses in terms of $\psi$.  In order to do this, we will need that certain elements of ${\rm End}(V)$ are invertible.

\begin{lemma}\label{lemma:invertible2}
Each of the following is invertible:
\begin{equation}
I-aq\psi,\qquad I-a^{-1}q\psi, \qquad I-aq^{-1}\psi,\qquad I-a^{-1}q^{-1}\psi.\label{eq:invertible2}
\end{equation}
Their inverses are as follows:
\begin{align}
&&(I-aq\psi)^{-1}&=\sum_{i=0}^da^iq^i\psi^i,\qquad &(I-a^{-1}q\psi)^{-1}&=\sum_{i=0}^d a^{-i}q^i\psi^i,\label{eq:psiinv1'}\\
&&(I-aq^{-1}\psi)^{-1}&=\sum_{i=0}^d a^iq^{-i}\psi^i,\qquad &(I-a^{-1}q^{-1}\psi)^{-1}&=\sum_{i=0}^d a^{-i}q^{-i}\psi^i.\label{eq:psiinv4'}
\end{align}
\end{lemma}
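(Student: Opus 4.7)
The plan is to exploit the nilpotence of $\psi$. By Lemma \ref{lemma:psiU} we have $\psi^{d+1} = 0$, so $\psi$ is nilpotent of index at most $d+1$. The standard observation is that for any scalar $c \in \K$ and any nilpotent $N \in {\rm End}(V)$ with $N^{d+1} = 0$, the element $I - cN$ is invertible with inverse given by the truncated geometric series
\begin{equation*}
(I-cN)^{-1} = \sum_{i=0}^{d} c^i N^i,
\end{equation*}
since the telescoping product
\begin{equation*}
(I-cN)\sum_{i=0}^{d} c^i N^i = \sum_{i=0}^{d} c^i N^i - \sum_{i=0}^{d} c^{i+1} N^{i+1} = I - c^{d+1} N^{d+1} = I.
\end{equation*}

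I would apply this observation four times, with $N = \psi$ and $c$ specialized in turn to each of $aq$, $a^{-1}q$, $aq^{-1}$, $a^{-1}q^{-1}$ (these are nonzero by Assumption \ref{assump:main}). This directly yields the invertibility assertion for all four elements in (\ref{eq:invertible2}) and produces the explicit series formulas (\ref{eq:psiinv1'}), (\ref{eq:psiinv4'}). One may also note that a symmetric computation shows $\bigl(\sum_{i=0}^d c^i N^i\bigr)(I-cN) = I$, so the proposed inverse is a two-sided inverse.

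There is essentially no obstacle here; the only facts used are the nilpotence $\psi^{d+1}=0$ from Lemma \ref{lemma:psiU} and that $a, q$ are nonzero (hence so are the four coefficients $a^{\pm 1}q^{\pm 1}$). The result is a routine application of the geometric series identity for nilpotent operators.
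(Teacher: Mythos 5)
Your proposal is correct and is exactly the paper's argument: the paper's proof simply invokes the nilpotence $\psi^{d+1}=0$ from Lemma \ref{lemma:psiU}, leaving the truncated geometric series computation implicit, which you have spelled out. Nothing is missing.
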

\begin{proof}
Recall from Lemma \ref{lemma:psiU} that $\psi^{d+1}=0$.  
\end{proof}

\begin{thm}\label{thm:BK}
The following hold:
\begin{align}
BK^{-1}&=\frac{I-aq\psi}{I-a^{-1}q\psi}, \qquad &&KB^{-1}=\frac{I-a^{-1}q\psi}{I-aq\psi},\label{eq:BK-1}\\
K^{-1}B&=\frac{I-aq^{-1}\psi}{I-a^{-1}q^{-1}\psi}, \qquad  &&B^{-1}K=\frac{I-a^{-1}q^{-1}\psi}{I-aq^{-1}\psi}.\label{eq:BK-2}
\end{align}
In {\rm (\ref{eq:BK-1})}, {\rm (\ref{eq:BK-2})} the denominators are invertible by Lemma \ref{lemma:invertible2}.
\end{thm}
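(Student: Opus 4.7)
The proof is a short algebraic manipulation that reduces the four identities in (\ref{eq:BK-1}) and (\ref{eq:BK-2}) to the coincidences already established in Proposition \ref{prop:coincide}. The main tools are the invertibility of each of the factors $I-a^{\pm 1}q^{\pm 1}\psi$ provided by Lemma \ref{lemma:invertible2}, together with the observation that numerator and denominator are polynomials in $\psi$ and therefore commute, so writing the right-hand sides as fractions is unambiguous.

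First I would take the first coincidence from Proposition \ref{prop:coincide}, namely $(I-aq\psi)K = (I-a^{-1}q\psi)B$. Right-multiplying by $K^{-1}$ gives $I-aq\psi = (I-a^{-1}q\psi)BK^{-1}$, and then left-multiplying by $(I-a^{-1}q\psi)^{-1}$ (invertible by Lemma \ref{lemma:invertible2}) yields
\begin{equation*}
BK^{-1}=(I-a^{-1}q\psi)^{-1}(I-aq\psi),
\end{equation*}
which is the first equation in (\ref{eq:BK-1}). The second equation in (\ref{eq:BK-1}) follows by inverting both sides (noting that $BK^{-1}$ is invertible since both $B$ and $K$ are).

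Next I would do the analogous computation with the second half of Proposition \ref{prop:coincide}. The coincidence $(I-a^{-1}q^{-1}\psi)K^{-1} = (I-aq^{-1}\psi)B^{-1}$, after right-multiplying by $B$ and left-multiplying by $(I-a^{-1}q^{-1}\psi)^{-1}$, gives
\begin{equation*}
K^{-1}B=(I-a^{-1}q^{-1}\psi)^{-1}(I-aq^{-1}\psi),
\end{equation*}
which is the third equation in (\ref{eq:BK-2}). Inverting produces the fourth equation.

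There is no real obstacle; the argument is purely formal. The only point worth flagging is the commutativity of numerator and denominator, which is immediate because both are polynomials in $\psi$ with scalar coefficients; this legitimizes the fractional notation in the statement of the theorem.
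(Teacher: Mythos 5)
Your proposal is correct and follows essentially the same route as the paper, whose proof is simply to combine Proposition \ref{prop:coincide} with Lemma \ref{lemma:invertible2}; you have merely written out the routine left/right multiplications and the commutativity remark (both factors being polynomials in $\psi$) that the paper leaves implicit.
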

\begin{proof}
Use Proposition \ref{prop:coincide} and Lemma \ref{lemma:invertible2}.
\end{proof}

\begin{lemma}\label{lemma:KBcomm}
The following mutually commute:
\begin{equation*}
\psi, \qquad\qquad BK^{-1},\qquad\qquad KB^{-1},\qquad\qquad K^{-1}B,\qquad\qquad B^{-1}K.
\end{equation*}
\end{lemma}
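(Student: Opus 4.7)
The proof plan rests on a single observation: by Theorem \ref{thm:BK}, each of $BK^{-1}$, $KB^{-1}$, $K^{-1}B$, $B^{-1}K$ is a rational expression in $\psi$, with both numerator and denominator polynomials in $\psi$. Combining this with Lemma \ref{lemma:invertible2}, which expresses each of the inverse factors $(I - a^{\pm 1}q^{\pm 1}\psi)^{-1}$ as an explicit polynomial in $\psi$ (using that $\psi^{d+1}=0$), we conclude that each of the four elements $BK^{-1}, KB^{-1}, K^{-1}B, B^{-1}K$ belongs to the commutative subalgebra $\K[\psi] \subseteq \mathrm{End}(V)$.

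Concretely, I would argue as follows. From (\ref{eq:BK-1}) and the first identity of (\ref{eq:psiinv1'}),
\begin{equation*}
BK^{-1} = (I-aq\psi)\sum_{i=0}^d a^{-i}q^i\psi^i,
\end{equation*}
which is a polynomial in $\psi$. The same reasoning, applied to the other three formulas in (\ref{eq:BK-1}) and (\ref{eq:BK-2}) together with the remaining identities in (\ref{eq:psiinv1'}) and (\ref{eq:psiinv4'}), shows that each of $KB^{-1}$, $K^{-1}B$, $B^{-1}K$ is also a polynomial in $\psi$.

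Since any two polynomials in the single element $\psi$ automatically commute, and each of them commutes with $\psi$ itself, the five elements $\psi$, $BK^{-1}$, $KB^{-1}$, $K^{-1}B$, $B^{-1}K$ mutually commute. There is essentially no obstacle here; the work was already done in establishing Theorem \ref{thm:BK} and Lemma \ref{lemma:invertible2}, and the present lemma is a direct corollary.
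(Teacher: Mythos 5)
Your proposal is correct and follows the same route as the paper: the paper's proof simply observes that by Theorem \ref{thm:BK} each of $BK^{-1}$, $KB^{-1}$, $K^{-1}B$, $B^{-1}K$ is a polynomial in $\psi$, which is exactly your argument, with your appeal to Lemma \ref{lemma:invertible2} just making explicit why the rational expressions reduce to polynomials. No gap; your version is a slightly more detailed rendering of the paper's one-line proof.
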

\begin{proof}
By Theorem \ref{thm:BK} each of 
the four expressions on the right 
is a polynomial in $\psi$.
\end{proof}

Shortly we will give four ways to write $\psi$ in terms of $K,B$.  
In order to do this, we will need that certain elements of ${\rm End}(V)$ are invertible.

\begin{lemma}\label{lemma:I-KB}
Each of
\begin{equation}
I-BK^{-1},\qquad I-KB^{-1},\qquad I-K^{-1}B,\qquad I-B^{-1}K \label{eq:I-BK}
\end{equation}
sends $U_i$ into $U_0+U_1+\cdots+U_{i-1}$ for $0\leq i\leq d$.  Moreover each of {\rm (\ref{eq:I-BK})}  
is nilpotent.  
\end{lemma}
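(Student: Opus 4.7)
The plan is to reduce both assertions to statements about $\psi$ by leveraging the rational expressions of Theorem~\ref{thm:BK}. Starting from $BK^{-1} = (I - aq\psi)(I - a^{-1}q\psi)^{-1}$, a direct simplification gives
\[
I - BK^{-1} = (a - a^{-1})q\,\psi\,(I - a^{-1}q\psi)^{-1},
\]
and the other three operators in (\ref{eq:I-BK}) admit fully analogous expressions, each equal to a nonzero scalar multiple of $\psi\,(I - a^{\pm 1}q^{\pm 1}\psi)^{-1}$. By Lemma~\ref{lemma:invertible2}, each of these inverses is a polynomial in $\psi$. Therefore each of the four operators in (\ref{eq:I-BK}) has the form $\psi\,P(\psi)$ for some polynomial $P$, that is, a polynomial in $\psi$ with zero constant term.

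Given this, the first assertion is immediate from Lemma~\ref{lemma:psiU}. Iterating $\psi U_i \subseteq U_{i-1}$ (with $U_{-1} = 0$) yields $\psi^k U_i \subseteq U_{i-k}$, where we set $U_j = 0$ for $j < 0$. A polynomial in $\psi$ with zero constant term thus sends $U_i$ into $U_0 + U_1 + \cdots + U_{i-1}$, as required; in particular it annihilates $U_0$.

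For the nilpotency claim, I observe that each operator commutes with $\psi$ (being itself a polynomial in $\psi$), so
\[
(\psi\,P(\psi))^{d+1} = \psi^{d+1}\,P(\psi)^{d+1} = 0,
\]
using $\psi^{d+1} = 0$ from Lemma~\ref{lemma:psiU}. Alternatively, the $U_i$-inclusion already forces nilpotency directly, since $d+1$ successive applications drop the $U$-index below zero on the direct sum $V = \sum_{i=0}^d U_i$. There is no substantive obstacle here: the heavy lifting was done in Theorem~\ref{thm:BK} (which packaged the four operators as rational functions of $\psi$) and Lemma~\ref{lemma:psiU} (which governs the action of $\psi$ on $U_i$), so this lemma amounts to a brief repackaging of those results.
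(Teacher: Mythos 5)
Your proof is correct, and there is no circularity: Theorem \ref{thm:BK} is established before Lemma \ref{lemma:I-KB} and does not depend on it (the lemma feeds only into Lemma \ref{lemma:invertible1} and Theorem \ref{thm:psiequations}), so invoking it here is legitimate. However, your route differs from the paper's. You pass through the $\psi$-calculus: from $BK^{-1}=(I-aq\psi)(I-a^{-1}q\psi)^{-1}$ you get $I-BK^{-1}=(a-a^{-1})q\,\psi(I-a^{-1}q\psi)^{-1}$, hence each operator in (\ref{eq:I-BK}) is a polynomial in $\psi$ with zero constant term, and then Lemma \ref{lemma:psiU} gives both the flag-lowering property and nilpotency. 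The paper instead argues directly from the split decompositions: on $U_i$ the map $K^{-1}$ acts as the scalar $q^{2i-d}$, so $I-BK^{-1}$ acts on $U_i$ as $I-q^{2i-d}B$, and Lemma \ref{lemma:KUdd} (which rests only on the elementary fact (\ref{eq:UUdd}) about partial sums of the two split decompositions) gives the inclusion; the case $I-KB^{-1}$ follows because it is $I$ minus the inverse of $I-(I-BK^{-1})$, and the other two cases are similar, with nilpotency immediate from the flag-lowering property. The comparison: the paper's argument is more elementary and self-contained, showing the lemma is really a basic fact about $K$, $B$ and the split decompositions, independent of the $U_q(\mathfrak{sl}_2)$-module structures and the Casimir comparison that underlie Theorem \ref{thm:BK}; your argument is shorter once Theorem \ref{thm:BK} is in hand and yields slightly more, namely that each operator in (\ref{eq:I-BK}) equals a nonzero scalar times $\psi$ times an invertible operator commuting with $\psi$ (so, for instance, its kernel coincides with that of $\psi$). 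Either proof is acceptable at this point in the paper.
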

\begin{proof}
We first consider $I-BK^{-1}$.  
On $U_i$,
\begin{equation*}
I-BK^{-1}=I-q^{2i-d}B.
\end{equation*}
By this and Lemma \ref{lemma:KUdd}, $I-BK^{-1}$ sends $U_i$ into $U_0+U_1+\cdots+U_{i-1}$.

Since $KB^{-1}$ and $BK^{-1}$ are inverses, we see that $I-KB^{-1}$ sends $U_i$ into $U_0+U_1+\cdots+U_{i-1}$.

The proof is similar for $I-K^{-1}B$ and $I-B^{-1}K$.
\end{proof}

\begin{lemma}\label{lemma:invertible1}
Each of the following is invertible:
\begin{align*}
&aI-a^{-1}BK^{-1}, &a^{-1}I-aKB^{-1},\\
&aI-a^{-1}K^{-1}B, &a^{-1}I-aB^{-1}K.
\end{align*}
\end{lemma}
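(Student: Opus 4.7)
The plan is to show that each of the four operators can be written as a nonzero scalar multiple of the identity plus a nilpotent perturbation, which is automatically invertible. The ingredients are already in place: Lemma \ref{lemma:I-KB} tells us that each of $I-BK^{-1}$, $I-KB^{-1}$, $I-K^{-1}B$, $I-B^{-1}K$ is nilpotent, and Lemma \ref{note:main}(i) forces $a^2 \neq 1$ (since the list $q^{2d-2}, q^{2d-4}, \ldots, q^{2-2d}$ contains $q^0 = 1$), so $a - a^{-1} \neq 0$.

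The key algebraic trick is the splitting
\begin{equation*}
aI - a^{-1}BK^{-1} = (a - a^{-1})I + a^{-1}(I - BK^{-1}),
\end{equation*}
and the three analogous identities obtained by replacing $BK^{-1}$ by $KB^{-1}$, $K^{-1}B$, $B^{-1}K$ (with corresponding sign flips on the scalar coefficient). In each case the right-hand side is $(\pm(a-a^{-1}))I$ plus a scalar times a nilpotent operator. Any such operator $cI + N$ with $c \neq 0$ and $N^{m+1} = 0$ is invertible; indeed $(cI + N)^{-1} = c^{-1}\sum_{j=0}^{m}(-c^{-1}N)^{j}$. Applying this with $c = \pm(a-a^{-1})$ and $N$ the appropriate nilpotent multiple yields invertibility of each of the four operators.

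As an alternative (which I would mention if a more explicit inverse is wanted later), Theorem \ref{thm:BK} lets one compute, for example,
\begin{equation*}
(aI - a^{-1}BK^{-1})(I - a^{-1}q\psi) = a(I - a^{-1}q\psi) - a^{-1}(I - aq\psi) = (a - a^{-1})I,
\end{equation*}
so $(aI - a^{-1}BK^{-1})^{-1} = (a-a^{-1})^{-1}(I - a^{-1}q\psi)$, with the factor on the right invertible by Lemma \ref{lemma:invertible2}. The other three cases are analogous, using the remaining identities from Theorem \ref{thm:BK}. This approach is useful because the explicit inverses will feed directly into the derivation of the formulas (\ref{eq:intro-psiequations1}), (\ref{eq:intro-psiequations2}) for $\psi$ in the next section.

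There is no real obstacle here; the only point to verify carefully is that $a - a^{-1} \neq 0$, which is why invoking Lemma \ref{note:main}(i) (specifically, that $a^2 \neq 1$) is essential. The rest of the argument is a one-line computation per case plus the standard fact that a scalar-plus-nilpotent operator is invertible.
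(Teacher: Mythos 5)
Your main argument is exactly the paper's proof: the same splitting $aI-a^{-1}BK^{-1}=(a-a^{-1})I+a^{-1}(I-BK^{-1})$, invoking the nilpotency from Lemma \ref{lemma:I-KB} and $a^{2}\neq 1$ from Lemma \ref{note:main}(i), with the other three cases handled analogously. The alternative you sketch via Theorem \ref{thm:BK} is also valid (and not circular, since that theorem precedes this lemma), but it is just a bonus on top of the paper's own route.
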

\begin{proof}
We show that $aI-a^{-1}BK^{-1}$ is invertible. 
Observe that 
\begin{equation*}
aI-a^{-1}BK^{-1}=(a-a^{-1})I+a^{-1}(I-BK^{-1}).
\end{equation*}
Now $aI-a^{-1}BK^{-1}$ is invertible by Lemma \ref{lemma:I-KB} and the fact that $a^{2}\neq 1$.

The remaining assertions are similarly proved.
\end{proof}

\begin{thm}\label{thm:psiequations}
The map $\psi$ is equal to each of the following:
\begin{eqnarray}
&{\displaystyle \frac{I-BK^{-1}}{q(aI-a^{-1}BK^{-1})}, \qquad\qquad\qquad \frac{I-KB^{-1}}{q(a^{-1}I-aKB^{-1})}},\label{eq:psiequations1}\\ \medskip
&{\displaystyle \frac{q(I-K^{-1}B)}{aI-a^{-1}K^{-1}B},\qquad\qquad\qquad \frac{q(I-B^{-1}K)}{a^{-1}I-aB^{-1}K}}.\label{eq:psiequations2}
\end{eqnarray}
In {\rm (\ref{eq:psiequations1})}, {\rm (\ref{eq:psiequations2})} the denominators are invertible by Lemma \ref{lemma:invertible1}.
\end{thm}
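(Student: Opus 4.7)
The plan is to derive each of the four formulas by solving the corresponding identity in Theorem~\ref{thm:BK} algebraically for $\psi$. All manipulations are legitimate because, by Lemma~\ref{lemma:KBcomm}, the operator $\psi$ commutes with each of $BK^{-1}$, $KB^{-1}$, $K^{-1}B$, $B^{-1}K$, so we may treat these expressions as elements of a commutative subalgebra of $\mathrm{End}(V)$.

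For the first expression, start with the identity
\[
BK^{-1}=\frac{I-aq\psi}{I-a^{-1}q\psi}
\]
from Theorem~\ref{thm:BK}. Clearing the denominator (which is invertible by Lemma~\ref{lemma:invertible2}) gives
\[
BK^{-1}\bigl(I-a^{-1}q\psi\bigr)=I-aq\psi.
\]
Rearranging to collect the $\psi$-terms on one side yields
\[
BK^{-1}-I=\bigl(a^{-1}q\,BK^{-1}-aq\,I\bigr)\psi = q\bigl(a^{-1}BK^{-1}-aI\bigr)\psi.
\]
Since $aI-a^{-1}BK^{-1}$ is invertible by Lemma~\ref{lemma:invertible1}, we may divide to obtain
\[
\psi=\frac{I-BK^{-1}}{q(aI-a^{-1}BK^{-1})},
\]
which is the first expression in (\ref{eq:psiequations1}).

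The remaining three identities are obtained by the same procedure applied in turn to the other three formulas of Theorem~\ref{thm:BK}. Specifically, starting from $KB^{-1}=(I-a^{-1}q\psi)/(I-aq\psi)$ produces the second expression in (\ref{eq:psiequations1}); starting from $K^{-1}B=(I-aq^{-1}\psi)/(I-a^{-1}q^{-1}\psi)$ produces the first expression in (\ref{eq:psiequations2}); and starting from $B^{-1}K=(I-a^{-1}q^{-1}\psi)/(I-aq^{-1}\psi)$ produces the second expression in (\ref{eq:psiequations2}). In each case, clearing denominators, isolating $\psi$, and invoking Lemma~\ref{lemma:invertible1} for the invertibility of the resulting denominator yields the stated rational form.

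There is no real obstacle here; the only thing to be careful about is the order of factors in the non-commutative ring $\mathrm{End}(V)$, and this concern is precisely what Lemma~\ref{lemma:KBcomm} dispenses with. All the invertibility statements needed for the divisions are covered by Lemmas~\ref{lemma:invertible2} and \ref{lemma:invertible1}, so the proof reduces to four parallel one-line algebraic inversions.
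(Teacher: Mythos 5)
Your proposal is correct and follows essentially the same route as the paper, whose proof is simply to solve each equation of Theorem~\ref{thm:BK} for $\psi$; your explicit clearing of denominators, isolation of $\psi$, and appeal to Lemmas~\ref{lemma:invertible2}, \ref{lemma:invertible1} (with commutativity from Lemma~\ref{lemma:KBcomm} justifying the fraction notation) just spells out that one-line argument.
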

\begin{proof}
In each equation of Theorem \ref{thm:BK}, solve for $\psi$.
\end{proof}

\begin{thm}\label{thm:KBquad}
We have
\begin{equation}
aK^2-\frac{a^{-1}q-aq^{-1}}{q-q^{-1}}\ KB-\frac{aq-a^{-1}q^{-1}}{q-q^{-1}}\ BK+a^{-1}B^2=0.\label{eq:KBquad1}
\end{equation}
\end{thm}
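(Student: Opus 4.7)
The strategy is to extract from Theorem \ref{thm:psiequations} one linear identity relating $\psi,K,B$, and then exploit the $q$-commutation of $\psi$ with $K$ and with $B$ to eliminate $\psi$ altogether, leaving a relation purely in $K$ and $B$.

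First, I will start from the formula $\psi = (I-BK^{-1})/[q(aI-a^{-1}BK^{-1})]$ in Theorem \ref{thm:psiequations}. Clearing the denominator is legitimate because $aI-a^{-1}BK^{-1}$ commutes with $\psi$: by Lemma \ref{lemma:KBcomm} it is a polynomial in $\psi$. This gives $q(aI-a^{-1}BK^{-1})\psi = I - BK^{-1}$. Multiplying on the right by $K$ and using $\psi K = q^{-2}K\psi$ from Lemma \ref{lemma:KpsiKinv} to absorb the trailing $K$, the $K^{-1}$ cancels and I arrive at the clean identity
\begin{equation*}
K - B \;=\; q^{-1}(aK - a^{-1}B)\psi. \qquad (\ast)
\end{equation*}

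The next step is to compute the product $(aK-a^{-1}B)(K-B)$ in two different ways. Left-multiplying $(\ast)$ by $aK-a^{-1}B$ gives immediately $(aK-a^{-1}B)(K-B) = q^{-1}(aK-a^{-1}B)^{2}\psi$. Right-multiplying $(\ast)$ by $aK-a^{-1}B$ and then pushing $\psi$ past the factor using $\psi(aK-a^{-1}B) = q^{-2}(aK-a^{-1}B)\psi$ (an immediate consequence of $\psi K = q^{-2}K\psi$ and $\psi B = q^{-2}B\psi$, both in Lemma \ref{lemma:KpsiKinv}) yields instead $(K-B)(aK-a^{-1}B) = q^{-3}(aK-a^{-1}B)^{2}\psi$. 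Equating the two expressions for $(aK-a^{-1}B)^{2}\psi$ eliminates $\psi$ completely and leaves
\begin{equation*}
(aK-a^{-1}B)(K-B) \;=\; q^{2}(K-B)(aK-a^{-1}B).
\end{equation*}

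To finish, I expand both sides and collect terms. The coefficients of $K^2, KB, BK, B^2$ all carry a common factor of $q$, and after pulling that out, Lemma \ref{note:main}(ii) together with $d\geq 1$ ensures $q^2\neq 1$, i.e., $q - q^{-1} \neq 0$, so I may divide by $q(q-q^{-1})$ to put the equation in the precise form \eqref{eq:KBquad1}. The main delicate point is tracking the order of multiplication when $\psi$ is commuted past $aK-a^{-1}B$: this is where the factor $q^{2}$ separating the two products enters. Once that $q$-commutation is correctly applied, the remainder of the argument is routine algebra.
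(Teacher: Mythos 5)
Your proof is correct, but it eliminates $\psi$ by a different mechanism than the paper. The paper equates two of the four expressions for $\psi$ in Theorem \ref{thm:psiequations} (the one in $BK^{-1}$ and the one in $B^{-1}K$) and clears denominators by a suitable left and right multiplication; the elimination of $\psi$ there comes from having two independent rational expressions for it. You instead use only one expression (equivalently, the single identity $(1-aq\psi)K=(1-a^{-1}q\psi)B$ of Proposition \ref{prop:coincide}, to which your $(\ast)$ is easily seen to be equivalent via Lemma \ref{lemma:KpsiKinv}), and you eliminate $\psi$ by exploiting the $q$-commutation $\psi K=q^{-2}K\psi$, $\psi B=q^{-2}B\psi$: multiplying $(\ast)$ by $aK-a^{-1}B$ on either side and comparing gives
\begin{equation*}
(aK-a^{-1}B)(K-B)=q^{2}(K-B)(aK-a^{-1}B),
\end{equation*}
which is exactly identity (\ref{eq:qcomm-1}) of Lemma \ref{lemma:KBfactor}; expanding and dividing by $q(q-q^{-1})$ (nonzero since $q^{4}\neq 1$) yields (\ref{eq:KBquad1}). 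All the steps check out: the denominator-clearing is justified by the commutativity in Lemma \ref{lemma:KBcomm}, no invertibility is needed when you equate the two expressions for $(aK-a^{-1}B)^{2}\psi$, and the coefficient bookkeeping is right. What your route buys is economy of input (one $\psi$-formula plus Lemma \ref{lemma:KpsiKinv} rather than two formulas) and it establishes the factored form of Lemma \ref{lemma:KBfactor} directly, reversing the paper's logical order, in which (\ref{eq:qcomm-1}) is deduced afterwards from Theorem \ref{thm:KBquad}; the paper's route is slightly more symmetric in $K$ and $B$ and avoids any appeal to the $\psi$-$K$, $\psi$-$B$ commutation relations at this stage. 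One cosmetic point: Lemma \ref{lemma:KBcomm} asserts the commutation of $\psi$ with $BK^{-1}$ directly (the statement that $BK^{-1}$ is a polynomial in $\psi$ appears only in its proof, via Theorem \ref{thm:BK}), so cite it for the commutation itself.
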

\begin{proof}
Equate the expression on the left in (\ref{eq:psiequations1}) and the expression on the right in (\ref{eq:psiequations2}).  
For every term in the resulting equation, multiply on the left by $B(a^{-1}I-aB^{-1}K)$ and on the right by $(aI-a^{-1}BK^{-1})K$. 
\end{proof}\\

We mention a reformulation of Theorem \ref{thm:KBquad}.  

\begin{thm}\label{thm:KBinvquad}
We have 
\begin{equation}
aB^{-2}-\frac{a^{-1}q-aq^{-1}}{q-q^{-1}}\ K^{-1}B^{-1}-\frac{aq-a^{-1}q^{-1}}{q-q^{-1}}\ B^{-1}K^{-1}+a^{-1}K^{-2}=0.\label{eq:KBquad2}
\end{equation}
\end{thm}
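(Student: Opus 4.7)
The plan is to obtain (\ref{eq:KBquad2}) from (\ref{eq:KBquad1}) by a single algebraic manipulation: multiply (\ref{eq:KBquad1}) on the left by $K^{-1}B^{-1}$ and on the right by $B^{-1}K^{-1}$, and then simplify each of the four resulting terms.

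Three of the four terms telescope by internal cancellation of adjacent $B$'s and $K$'s:
\begin{align*}
K^{-1}B^{-1}\cdot KB\cdot B^{-1}K^{-1}&=K^{-1}B^{-1},\\
K^{-1}B^{-1}\cdot BK\cdot B^{-1}K^{-1}&=B^{-1}K^{-1},\\
K^{-1}B^{-1}\cdot B^{2}\cdot B^{-1}K^{-1}&=K^{-2}.
\end{align*}
The fourth term is $K^{-1}B^{-1}K^{2}B^{-1}K^{-1}=K^{-1}(B^{-1}K^{2}B^{-1})K^{-1}$, and its reduction is the only nonroutine step.

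The key identity is $B^{-1}K^{2}B^{-1}=KB^{-2}K$. The plan is to factor the left side as $(B^{-1}K)(KB^{-1})$ and the right side as $(KB^{-1})(B^{-1}K)$, and then invoke Lemma \ref{lemma:KBcomm}, which says that $B^{-1}K$ and $KB^{-1}$ commute (both being polynomials in $\psi$ by Theorem \ref{thm:BK}). Once $B^{-1}K^{2}B^{-1}=KB^{-2}K$ is in hand, the fourth term becomes $K^{-1}(KB^{-2}K)K^{-1}=B^{-2}$, and assembling the four simplifications with the original coefficients from (\ref{eq:KBquad1}) reproduces (\ref{eq:KBquad2}) verbatim. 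The commutativity step is the only substantive point; the rest is bookkeeping, so I foresee no real obstacle.
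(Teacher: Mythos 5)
Your proposal is correct and is essentially the paper's own argument: the paper multiplies (\ref{eq:KBquad1}) on the left by $B^{-1}K^{-1}$ and on the right by $K^{-1}B^{-1}$ and simplifies using Lemma \ref{lemma:KBcomm}, whereas you use the same two multipliers in swapped positions, which merely shifts the single application of Lemma \ref{lemma:KBcomm} from the $B^{2}$ term to the $K^{2}$ term. All four of your term reductions check out, so no gap.
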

\begin{proof}
For every term in (\ref{eq:KBquad1}), multiply on the left by $B^{-1}K^{-1}$ 
and on the right by $K^{-1}B^{-1}$. 
Simplify the result using Lemma \ref{lemma:KBcomm}.  
\end{proof}
\bigskip

Equations (\ref{eq:KBquad1}) and (\ref{eq:KBquad2}) can be put in the following attractive forms.
\begin{lemma}\label{lemma:KBfactor} 
The following equations hold:
\begin{align}
q(K-B)(aK-a^{-1}B)&=q^{-1}(aK-a^{-1}B)(K-B),\label{eq:qcomm-1}\\
q(a^{-1}K^{-1}-aB^{-1})(K^{-1}-B^{-1})&=q^{-1}(K^{-1}-B^{-1})(a^{-1}K^{-1}-aB^{-1}),\label{eq:qcomm-2}\\
q(I-K^{-1}B)(aI-a^{-1}BK^{-1})&=q^{-1}(aI-a^{-1}K^{-1}B)(I-BK^{-1}),\label{eq:qcomm-3}\\
q(a^{-1}I-aKB^{-1})(I-B^{-1}K)&=q^{-1}(I-KB^{-1})(a^{-1}I-aB^{-1}K).\label{eq:qcomm-4}
\end{align}
\end{lemma}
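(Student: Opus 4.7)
The plan is to derive the four equations from the quadratic relations already established in Theorems \ref{thm:KBquad} and \ref{thm:KBinvquad}, using only elementary manipulation together with the commutation relations of Lemma \ref{lemma:KBcomm}. First I would show that (\ref{eq:qcomm-1}) is merely a repackaging of (\ref{eq:KBquad1}). Expanding the products $(K-B)(aK-a^{-1}B)$ and $(aK-a^{-1}B)(K-B)$ and subtracting, the difference of the two sides of (\ref{eq:qcomm-1}) collects into $(q-q^{-1})$ times an expression that is identically the left side of (\ref{eq:KBquad1}); since that expression vanishes by Theorem \ref{thm:KBquad}, equation (\ref{eq:qcomm-1}) follows. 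An identical argument, substituting $K^{-1},B^{-1}$ for $K,B$ and invoking Theorem \ref{thm:KBinvquad}, establishes (\ref{eq:qcomm-2}).

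Next I would obtain (\ref{eq:qcomm-3}) from (\ref{eq:qcomm-1}) by multiplying both sides on the left and on the right by $K^{-1}$. The left side becomes $qK^{-1}(K-B)(aK-a^{-1}B)K^{-1} = q(I-K^{-1}B)(aI-a^{-1}BK^{-1})$, and the right side becomes $q^{-1}K^{-1}(aK-a^{-1}B)(K-B)K^{-1} = q^{-1}(aI-a^{-1}K^{-1}B)(I-BK^{-1})$, which is exactly (\ref{eq:qcomm-3}). Likewise, (\ref{eq:qcomm-4}) is obtained from (\ref{eq:qcomm-2}) by multiplying on both sides by $K$: the factor $(a^{-1}K^{-1}-aB^{-1})$ becomes $(a^{-1}I-aKB^{-1})$, the factor $(K^{-1}-B^{-1})$ becomes $(I-B^{-1}K)$, and on the other side $K(K^{-1}-B^{-1}) = I-KB^{-1}$ and $(a^{-1}K^{-1}-aB^{-1})K = a^{-1}I-aB^{-1}K$, producing (\ref{eq:qcomm-4}).

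Since each step is either a direct expansion or a one-sided multiplication by the invertible operator $K^{\pm 1}$, there is no genuine obstacle. The one point worth flagging is to verify that the coefficient $(q-q^{-1})$ extracted in the reduction of (\ref{eq:qcomm-1}) to (\ref{eq:KBquad1}) is nonzero, so that no information is lost in the identification; this is immediate from Assumption \ref{assump:main}, which gives $q^4\neq 1$ and hence $q-q^{-1}\neq 0$. Once this is noted, the whole lemma reduces to routine bookkeeping.
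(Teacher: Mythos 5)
Your proposal is correct and follows essentially the same route as the paper: equations (\ref{eq:qcomm-1}) and (\ref{eq:qcomm-2}) by expanding both sides and matching against Theorems \ref{thm:KBquad} and \ref{thm:KBinvquad} (with the harmless extra factor $q-q^{-1}\neq 0$), and equations (\ref{eq:qcomm-3}) and (\ref{eq:qcomm-4}) by multiplying (\ref{eq:qcomm-1}) on both sides by $K^{-1}$ and (\ref{eq:qcomm-2}) on both sides by $K$, exactly as in the paper's proof.
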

\begin{proof}
To verify (\ref{eq:qcomm-1}), multiply out each side and compare the result with (\ref{eq:KBquad1}).  
Equation (\ref{eq:qcomm-2}) is similarly verified using (\ref{eq:KBquad2}).  
To verify (\ref{eq:qcomm-3}), multiply each term in (\ref{eq:qcomm-1}) on the left by $K^{-1}$ and on the right by $K^{-1}$.  
To verify (\ref{eq:qcomm-4}), 
multiply each term in (\ref{eq:qcomm-2}) on the left by $K$ and on the right by $K$.  
\end{proof}

\section{How $R, K^{\pm 1}$ and $R^\Downarrow, B^{\pm 1}$ are related}\label{section:related2}

We continue to discuss the situation of Assumption \ref{assump:main}. 
In Sections \ref{section:Umod} and \ref{section:Umoddd} we displayed two $U_q(\mathfrak{sl}_2)$-actions on $V$.  
A natural question is, can we write each of the operators for one action in terms of the operators for the other action.  In this section we demonstrate that this can be done.\\

Recall from Lemma \ref{lemma:psiequal} that
\begin{equation*}
\psi=\psi^\Downarrow.
\end{equation*}

We now give $R^\Downarrow, B^{\pm 1}$ in terms of $\psi, R, K^{\pm 1}$.

\begin{lemma}\label{lemma:KKBB1}
The following equations hold:
\begin{align}
B&=a^{2}K+(1-a^{2})K\sum_{i=0}^d a^{-i}q^{-i}\psi^i,\label{eq:KKBB1-1}\\
B^{-1}&=a^{-2}K^{-1}+(1-a^{-2})K^{-1}\sum_{i=0}^d a^{i}q^{i}\psi^i,\label{eq:KKBB1-2}\\
R^\Downarrow &=R+(a-a^{-1})\sum_{i=0}^d (a^{-i}q^{-i}K-a^iq^{i}K^{-1})\psi^i.\label{eq:KKBB1-3}
\end{align}
\end{lemma}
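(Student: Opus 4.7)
The plan is to read the formulas for $B$ and $B^{-1}$ directly out of Proposition~\ref{prop:coincide} by inverting one of the factors using Lemma~\ref{lemma:invertible2}, and then to obtain the formula for $R^{\Downarrow}$ by plugging the results into equation~(\ref{eq:Rdiff}).

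First I would take the third and fourth expressions from the first list in Proposition~\ref{prop:coincide}, namely $K(I-aq^{-1}\psi)=B(I-a^{-1}q^{-1}\psi)$. By Lemma~\ref{lemma:invertible2} the factor on the right is invertible with $(I-a^{-1}q^{-1}\psi)^{-1}=\sum_{i=0}^d a^{-i}q^{-i}\psi^i$, so
\[
B \;=\; K(I-aq^{-1}\psi)\sum_{i=0}^d a^{-i}q^{-i}\psi^i.
\]
Distributing and re-indexing the second piece by $j=i+1$, the tail term at $j=d+1$ vanishes since $\psi^{d+1}=0$ (Lemma~\ref{lemma:psiU}), and the resulting two sums telescope coefficient-by-coefficient into $(1-a^{2})K\sum_{i=1}^d a^{-i}q^{-i}\psi^i+K$, which rearranges to line (\ref{eq:KKBB1-1}). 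For line (\ref{eq:KKBB1-2}) I would run the same argument on the coincidence $K^{-1}(I-a^{-1}q\psi)=B^{-1}(I-aq\psi)$ from the second list in Proposition~\ref{prop:coincide}, using $(I-aq\psi)^{-1}=\sum_{i=0}^d a^{i}q^{i}\psi^i$.

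Finally, for (\ref{eq:KKBB1-3}) I would simply substitute the two formulas just obtained into
\[
R^{\Downarrow}-R \;=\; aK+a^{-1}K^{-1}-a^{-1}B-aB^{-1},
\]
which is (\ref{eq:Rdiff}). The ``constant'' parts $a^{-1}\cdot a^{2}K$ and $a\cdot a^{-2}K^{-1}$ of $a^{-1}B$ and $aB^{-1}$ cancel the $aK$ and $a^{-1}K^{-1}$ terms, and the remaining pieces collapse to $(a-a^{-1})\sum_{i=0}^d(a^{-i}q^{-i}K-a^{i}q^{i}K^{-1})\psi^i$.

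There is no serious obstacle; the only point that requires care is the index shift in the first computation, where one must check the boundary terms and invoke $\psi^{d+1}=0$ to truncate the shifted sum. Note that one does \emph{not} need to commute $K$ past $\psi^i$ at any stage, so the non-commutativity from Lemma~\ref{lemma:KpsiKinv} plays no role in the derivation.
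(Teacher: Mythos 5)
Your proof is correct and essentially the paper's own: the paper obtains (\ref{eq:KKBB1-1}) and (\ref{eq:KKBB1-2}) from Theorem \ref{thm:BK} together with the expansions (\ref{eq:psiinv1'}), (\ref{eq:psiinv4'}), which is exactly your use of Proposition \ref{prop:coincide} (of which Theorem \ref{thm:BK} is a direct repackaging) combined with Lemma \ref{lemma:invertible2}, and your choice of factors indeed avoids any commutation of $K$ past $\psi$. The third identity is then derived identically in both arguments, by substituting the first two formulas into (\ref{eq:Rdiff}).
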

\begin{proof}
To obtain (\ref{eq:KKBB1-1}) and (\ref{eq:KKBB1-2}), use Theorem \ref{thm:BK} along with (\ref{eq:psiinv1'}) and (\ref{eq:psiinv4'}).
To obtain (\ref{eq:KKBB1-3}), use (\ref{eq:Rdiff}) along with (\ref{eq:KKBB1-1}) and (\ref{eq:KKBB1-2}).
\end{proof}\\

We now give $R, K^{\pm 1}$ in terms of $\psi, R^\Downarrow, B^{\pm 1}$.

\begin{lemma}\label{lemma:KKBB2}
The following equations hold:
\begin{align}
K&=a^{-2}B+(1-a^{-2})B\sum_{i=0}^d a^iq^{-i}\psi^i,\label{eq:KKBB2-1}\\
K^{-1}&=a^{2}B^{-1}+(1-a^{2})B^{-1} \sum_{i=0}^d a^{-i}q^{i}\psi^i,\label{eq:KKBB2-2}\\
R&=R^\Downarrow+(a-a^{-1})\sum_{i=0}^d (a^{-i}q^iB^{-1}-a^iq^{-i}B)\psi^i.\label{eq:KKBB2-3}
\end{align}
\end{lemma}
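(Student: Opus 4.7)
My plan is to mirror the proof of Lemma \ref{lemma:KKBB1}, swapping the roles of $K$ and $B$ (and correspondingly of $R$ and $R^\Downarrow$, and of $a$ and $a^{-1}$) throughout. The three ingredients I will need are the rational expressions for $KB^{-1}$ and $K^{-1}B$ from Theorem \ref{thm:BK}, the geometric-series expansions of $(I - aq^{\pm 1}\psi)^{-1}$ and $(I - a^{-1}q^{\pm 1}\psi)^{-1}$ from Lemma \ref{lemma:invertible2}, and the commutation rule $B\psi B^{-1} = q^{2}\psi$ from Lemma \ref{lemma:KpsiKinv}, which iterates to $\psi^{i} B^{\pm 1} = q^{\mp 2i}B^{\pm 1}\psi^{i}$.

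For (\ref{eq:KKBB2-1}), I would start from $KB^{-1}=(I-a^{-1}q\psi)(I-aq\psi)^{-1}$ in Theorem \ref{thm:BK}, multiply on the right by $B$ to get
\begin{equation*}
K \;=\; (I-a^{-1}q\psi)\Bigl(\sum_{i=0}^{d} a^{i}q^{i}\psi^{i}\Bigr) B ,
\end{equation*}
and collect the telescoping difference in $\psi$ to produce a single series of the form $\bigl(c_{0} I + c_{1}\sum_{i\ge 1} a^{i}q^{i}\psi^{i}\bigr)B$. Moving $B$ to the left of each $\psi^{i}$ using $\psi^{i}B = q^{-2i}B\psi^{i}$ converts $\sum_{i\ge 1} a^{i}q^{i}\psi^{i}B$ into $B\sum_{i\ge 1}a^{i}q^{-i}\psi^{i}$, and folding the constant term into the sum gives exactly $a^{-2}B + (1-a^{-2})B\sum_{i=0}^{d}a^{i}q^{-i}\psi^{i}$. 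The argument for (\ref{eq:KKBB2-2}) is identical with $K^{-1}B = (I-aq^{-1}\psi)(I-a^{-1}q^{-1}\psi)^{-1}$ in place of $KB^{-1}$, multiplying on the right by $B^{-1}$ and using $\psi^{i}B^{-1}=q^{2i}B^{-1}\psi^{i}$ to collect $B^{-1}$ on the left.

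For (\ref{eq:KKBB2-3}), I would solve (\ref{eq:Rdiff}) for $R$, obtaining
\begin{equation*}
R \;=\; R^{\Downarrow} - aK - a^{-1}K^{-1} + a^{-1}B + aB^{-1},
\end{equation*}
and then substitute the expressions for $K$ and $K^{-1}$ just obtained. The constant contributions $-a\cdot a^{-2}B = -a^{-1}B$ and $-a^{-1}\cdot a^{2}B^{-1} = -aB^{-1}$ cancel exactly with the terms $+a^{-1}B$ and $+aB^{-1}$; what remains combines into $(a-a^{-1})\sum_{i=0}^{d}(a^{-i}q^{i}B^{-1} - a^{i}q^{-i}B)\psi^{i}$, which is (\ref{eq:KKBB2-3}).

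The proof is essentially bookkeeping, and no serious obstacle is expected; the only place to be careful is the side on which $B^{\pm 1}$ appears in the final formulas, which is fixed by consistent use of $\psi^{i}B^{\pm 1} = q^{\mp 2i}B^{\pm 1}\psi^{i}$. (One could alternatively invoke the general principle that $\Phi$ and $\Phi^{\Downarrow}$ are related by the inversion sending $K\mapsto B$, $R\mapsto R^{\Downarrow}$, $a\mapsto a^{-1}$, under which Lemma \ref{lemma:KKBB1} becomes Lemma \ref{lemma:KKBB2}, but a direct computation is equally short and avoids having to verify the effect of the inversion on each ingredient.)
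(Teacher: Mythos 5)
Your proposal is correct and is essentially the paper's argument: equation (\ref{eq:KKBB2-3}) is obtained exactly as in the paper from (\ref{eq:Rdiff}) together with the first two equations, and your derivation of (\ref{eq:KKBB2-1}), (\ref{eq:KKBB2-2}) is the same bookkeeping with Theorem \ref{thm:BK}, the expansions of Lemma \ref{lemma:invertible2}, and $B\psi B^{-1}=q^{2}\psi$. The only cosmetic difference is that the paper cites Lemma \ref{lemma:KKBB1} (itself proved from Theorem \ref{thm:BK}) plus Lemma \ref{lemma:KpsiKinv} for the first two equations, whereas you go back to Theorem \ref{thm:BK} directly; the computation is the same either way.
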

\begin{proof}
To obtain (\ref{eq:KKBB2-1}) and (\ref{eq:KKBB2-2}), use (\ref{eq:KKBB1-1}) and (\ref{eq:KKBB1-2}) along with Lemma \ref{lemma:KpsiKinv}.  
To obtain (\ref{eq:KKBB2-3}), use (\ref{eq:Rdiff}) along with (\ref{eq:KKBB2-1}) and (\ref{eq:KKBB2-2}).
\end{proof}

\section{How $A, \psi$ are related}\label{section:Apsi}

We continue to discuss the situation of Assumption \ref{assump:main}.  
In this section we show how $A$ and $\psi$ are related.  
In what follows we refer to the $\Lambda$-action from Lemma \ref{lemma:4exp}.

\begin{lemma}
On $V$, we have
\begin{equation}
\begin{split}
A^2\psi-(q^2+q^{-2})&A\psi A+\psi A^2 +(q^2-q^{-2})^2\psi\\
&=-(q-q^{-1})^2\Lambda A+(a+a^{-1})(q-q^{-1})^2(q+q^{-1})I\label{eq:A^2psi}\\
\end{split}
\end{equation}
and also
\begin{equation}
\psi^2 A-(q^2+q^{-2})\psi A\psi + A\psi^2=-(q-q^{-1})^2\Lambda\psi.\label{eq:psi^2A}
\end{equation}
\end{lemma}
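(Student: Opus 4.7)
The plan is to reduce both identities to those already established in Lemma \ref{lemma:R^2psi} by substituting $R = A - aK - a^{-1}K^{-1}$ from (\ref{eq:RAK}) and simplifying using the $q$-commutation rules $\psi K^{\pm 1} = q^{\mp 2}K^{\pm 1}\psi$ of Lemma \ref{lemma:KpsiKinv}. Set $S := aK + a^{-1}K^{-1}$ so that $R = A - S$.

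For (\ref{eq:psi^2A}), the map $X\mapsto \psi^2 X - (q^2+q^{-2})\psi X\psi + X\psi^2$ is linear in $X$. Substituting $R = A - S$ into (\ref{eq:psi^2R}) gives
\begin{equation*}
\psi^2 A - (q^2+q^{-2})\psi A\psi + A\psi^2 = -(q-q^{-1})^2\Lambda\psi + \bigl[\psi^2 S - (q^2+q^{-2})\psi S\psi + S\psi^2\bigr].
\end{equation*}
A one-line verification shows that the bracketed expression vanishes: for each $Z\in\{K,K^{-1}\}$, the relation $\psi Z = q^{\mp 2}Z\psi$ implies $\psi^2 Z = q^{\mp 4}Z\psi^2$ and $\psi Z\psi = q^{\mp 2}Z\psi^2$, so the three coefficients of $Z\psi^2$ sum to $q^{\mp 4}-(q^2+q^{-2})q^{\mp 2}+1 = 0$. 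This yields (\ref{eq:psi^2A}).

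For (\ref{eq:A^2psi}), define $Q(X) := X^2\psi - (q^2+q^{-2})X\psi X + \psi X^2$. A direct expansion gives the decomposition $Q(A-S) = Q(A) + Q(S) - C_1 - C_2$, where the cross terms are $C_1 := AS\psi - (q^2+q^{-2})A\psi S + \psi AS$ and $C_2 := SA\psi - (q^2+q^{-2})S\psi A + \psi SA$. By (\ref{eq:R^2psi}), $Q(A-S) = Q(R) = -(q-q^{-1})^2\Lambda R$. A computation analogous to the $Q'$-case shows $Q(S) = -(q^2-q^{-2})^2\psi$: indeed, on $U_i$ the scalar $\sigma_i := aq^{d-2i}+a^{-1}q^{2i-d}$ satisfies $\sigma_{i-1}^2 - (q^2+q^{-2})\sigma_i\sigma_{i-1} + \sigma_i^2 = -(q^2-q^{-2})^2$, using the identity $aq^{d-2i}\cdot a^{-1}q^{2i-d} = 1$.

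The bulk of the work is to compute $C_1 + C_2$, using the $q$-Weyl relations of Lemma \ref{lemma:AKq-Weyl}, the fundamental commutator $\psi R - R\psi = (q-q^{-1})(K-K^{-1})$ from (\ref{psiR}), and the Casimir formula $\Lambda = R\psi + qK + q^{-1}K^{-1}$ from Lemma \ref{lemma:casimir-act1} to turn the appearing $R\psi S$ into $\Lambda S$ plus $K^{\pm 2}$ and scalar terms. After bookkeeping one obtains
\begin{equation*}
C_1 + C_2 = -(q-q^{-1})^2\Lambda S + (a+a^{-1})(q-q^{-1})^2(q+q^{-1})I - 2(q^2-q^{-2})^2\psi.
\end{equation*}
Substituting into $Q(A) = Q(R) + (C_1+C_2) - Q(S)$ and using $\Lambda(R+S) = \Lambda A$ yields (\ref{eq:A^2psi}). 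The main obstacle is the careful sign-tracking in $C_1 + C_2$; the inhomogeneous piece $(a+a^{-1})(q-q^{-1})^2(q+q^{-1})I$ arises precisely because $A$ and $K$ satisfy the inhomogeneous relation of Lemma \ref{lemma:AKq-Weyl} rather than a pure $q$-commutation. As a cross-check, both identities can be verified on each irreducible summand $Mv\cong L(d-2i,1)$ of Lemma \ref{lemma:Mv-mod}, on which $\Lambda$ is the scalar $q^{d-2i+1}+q^{2i-d-1}$ and $A,\psi$ have the explicit tridiagonal/subdiagonal matrix forms recorded in the proof of Lemma \ref{lemma:Mv-mod}.
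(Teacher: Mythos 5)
Your proof is correct and takes essentially the same route as the paper: eliminate $A$ via $A=R+aK+a^{-1}K^{-1}$, reduce to Lemma \ref{lemma:R^2psi} using the $q$-commutation relations (\ref{eq:KR,Kpsi}) and Lemma \ref{lemma:KpsiKinv}, and finish with the Casimir formulas of Lemma \ref{lemma:casimir-act1}. The one step you leave as ``bookkeeping,'' the value of $C_1+C_2$, does check out (it is equivalent to the paper's intermediate identity expressing the left-hand side as $-(q-q^{-1})^2\Lambda A$ plus $\Lambda-\psi R$ and $\Lambda-R\psi$ times suitable combinations of $K^{\pm 1}$), so the argument is complete in substance, differing from the paper only in how the expansion is organized.
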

\begin{proof}
We first prove (\ref{eq:A^2psi}).  Let $L$ denote the expression on the left-hand side in (\ref{eq:A^2psi}).  
In $L$, eliminate $A$ using $A=R+aK+a^{-1}K^{-1}$.  Simplify the result using (\ref{eq:KR,Kpsi}) and (\ref{eq:R^2psi}).  This shows that
$L$ is equal to $-(q-q^{-1})^2\Lambda A$ plus $\Lambda-\psi R$ times
\begin{equation*}
a(q^{2}-1)K+a^{-1}(q^{-2}-1)K^{-1}
\end{equation*}
plus $\Lambda-R\psi$ times
\begin{equation*}
a(q^{-2}-1)K+a^{-1}(q^2-1)K^{-1}.
\end{equation*}
In this expression, eliminate $\Lambda-\psi R$ and $\Lambda-R\psi$ using Lemma \ref{lemma:casimir-act1}.  The resulting expression for $L$ is the right-hand side of (\ref{eq:A^2psi}).

The proof of (\ref{eq:psi^2A}) is similar.
\end{proof}

\section{Acknowledgments}

This paper was written while the author was a graduate student at the University of Wisconsin-Madison. The author would like to thank her advisor, Paul Terwilliger, for offering many valuable ideas and suggestions.\\

The author would also like to thank Kazumasa Nomura for giving this paper a close reading and offering many valuable suggestions.


\noindent Sarah Bockting-Conrad \hfil\break
\noindent Department of Mathematics \hfil\break
\noindent University of Wisconsin \hfil\break
\noindent 480 Lincoln Drive \hfil\break
\noindent Madison, WI 53706-1388 USA \hfil\break
\noindent email: {\tt bockting@math.wisc.edu }\hfil\break

\end{document}